\Crefname{appendix}{Appendix}{Appendices} % label for appendices
\setlist[enumerate,1]{label=\textup{(\arabic*)},itemsep=0pt}
\setlist[enumerate,2]{label=\textup{(\alph*)},itemsep=0pt}
\setlist[itemize,1]{itemsep=0pt}
\setlist[itemize,2]{label=$\circ$,itemsep=0pt}
\renewcommand*{\PrintDOI}[1]{\href{http://dx.doi.org/\detokenize{#1}}{doi:
\detokenize{#1}}}
\numberwithin{figure}{section}
\numberwithin{equation}{section}
\theoremstyle{plain}
\newtheorem{theorem}[equation]{Theorem}
\newtheorem*{theorem*}{Theorem} % theorem without numbering
\newtheorem{lemma}[equation]{Lemma}
\newtheorem{proposition}[equation]{Proposition}
\newtheorem{corollary}[equation]{Corollary}
\theoremstyle{definition}
\newtheorem{definition}[equation]{Definition}
\theoremstyle{remark}
\newtheorem{remark}[equation]{Remark}
\newtheorem{example}[equation]{Example}
\newcommand{\N}{\mathbb{N}}
\newcommand{\Z}{\mathbb{Z}}
\newcommand{\R}{\mathbb{R}}
\newcommand{\C}{\mathbb{C}}
\newcommand{\Mat}{\mathbb{M}}
\newcommand{\Cpt}{\mathbb{K}}
\newcommand{\Bdd}{\mathbb{B}}
\newcommand{\K}{\relax\ifmmode\operatorname{K}\else\textup{K}\fi}
\newcommand{\KK}{\relax\ifmmode\operatorname{KK}\else\textup{KK}\fi}
\newcommand{\E}{\relax\ifmmode\operatorname{E}\else\textup{E}\fi}
\newcommand{\st}{\relax\ifmmode{}^*\else{}\textup{*}\fi}
\newcommand{\Cst}{\relax\ifmmode\mathrm{C}^*\else\textup{C*}\fi}
\newcommand{\Cont}{\mathrm{C}}
\newcommand{\Co}{\mathrm{C}_0}
\newcommand{\Cc}{\mathrm{C}_\mathrm{c}}
\newcommand{\Fin}{\operatorname{Fin}}
\newcommand{\End}{\operatorname{End}} 
\newcommand{\Ped}{\operatorname{Ped}} % Pedersen ideal
\newcommand{\dom}{\operatorname{dom}} % domain
\newcommand{\spc}{\operatorname{spec}} % spectrum
\newcommand{\Hom}{\operatorname{Hom}} % Hom set
\newcommand{\id}{\operatorname{id}} % identity
\newcommand{\ev}{\operatorname{ev}} % evaluation
\newcommand{\tr}{\operatorname{tr}} % trace
\newcommand{\sig}{\operatorname{sig}} % signature
\newcommand{\rank}{\operatorname{rank}} % rank
\newcommand{\Ad}{\operatorname{Ad}} % adjoint
\renewcommand{\Re}{\operatorname{Re}} % real
\newcommand{\defeq}{\mathrel{\vcentcolon=}}
\let\amsamp=& % allow to use \amsamp in matrices
\DeclarePairedDelimiter{\set}{\lbrace}{\rbrace} % set
\DeclarePairedDelimiter{\abs}{\lvert}{\rvert} % absolute value
\DeclarePairedDelimiter{\norm}{\lVert}{\rVert} % norm
\DeclarePairedDelimiter{\ket}{\lvert}{\rangle} % Dirac ket
\DeclarePairedDelimiter{\bra}{\langle}{\rvert} % Dirac bra
\DeclarePairedDelimiter{\braket}{\langle}{\rangle} % inner product with ,
\DeclarePairedDelimiterX{\braketvert}[3]{\langle}{\rangle}{#1\,\delimsize\vert\,\mathopen{}#2\mathopen{}\,\delimsize\vert\,#3} % inner product with |
\renewcommand{\@fnsymbol}[1]{}
\title{\vspace{-0.5em}The odd spectral~localiser via asymptotic~morphisms
and quasi-projections}
\author{Yuezhao Li}
\author{Bram Mesland\thanks{Email:
\texttt{\href{mailto:y.li@math.leidenuniv.nl}{y.li@math.leidenuniv.nl}},
\texttt{\href{mailto:b.mesland@math.leidenuniv.nl}{b.mesland@math.leidenuniv.nl}}.
}}
\affil{Mathematical Institute, Leiden University\thanks{Address:
Niels Bohrweg 1, 2333 CA Leiden, Netherlands
}}
\date{\today}
\begin{document}
\maketitle
% main text starts from here
\setcounter{section}{0}
\renewcommand\theHsection{P1.\thesection}
\renewcommand\thesection{\arabic{section}}
\vspace{-0.5cm}
\begin{abstract}
We describe the index pairing between an odd K-theory class and an odd
unbounded Kasparov module by a pair of quasi-projections, supported on a
submodule obtained from a finite spectral truncation.
We achieve this by pairing the K-theory class with an
asymptotic morphism determined by the unbounded Kasparov module.
We interpret the spectral localiser of Loring and
Schulz-Baldes as an instance of such an index pairing.
\end{abstract}

\tableofcontents

\section{Introduction}
In this note, we provide a quasi-projection representative of the index
pairing between a unitary~\( v\in A \) and an odd unbounded Kasparov~\( A
\)-\( B \)--module~\( (\mathcal{A},E,D) \), where~\( A \) and~\( B \) are
\Cst-algebras and~\( A \) is unital. These quasi-projections are supported
on a complemented submodule of~\( E\oplus E \) obtained from a finite
spectral truncation onto the low-lying spectrum of~\( D\oplus D \). In
particular, we realise the odd spectral localiser of Loring and
Schulz-Baldes
\cites{Loring-SBaldes:Odd_spectral_localiser,Loring-SBaldes:Odd_spectral_localiser_sf}
as such an index pairing for the case~\( B=\C \), and the odd semi-finite
spectral localiser of Schulz-Baldes and Stoiber
\cite{SBaldes-Stoiber:Semifinite_spectral_localizer} as a~\( \tau
\)-numerical index pairing where~\( B \) is unital and~\( \tau\colon B\to\C
\) is a finite, faithful trace.   Our work provides a new proof of their
results, and allows for interpreting the odd spectral localiser as an
E-theoretic index pairing.

\subsubsection*{The odd index pairing}
An index pairing is, by definition, the pairing between a K-theory class
and a KK-theory class, via the celebrated Kasparov product
\begin{equation}\label{eq:KK_index_pairing}\tag{KK}
    \underbrace{\KK_i(\C,A)}_{\simeq\K_i(A)}
    \times\KK_j(A,B)\to\underbrace{\KK_{i+j}(\C,B)}_{\simeq\K_{i+j}(B)}.
\end{equation}  
We call it the odd--odd index pairing if both~\( i \) and~\( j \) are odd.
If~\( B=\C \), then the odd--odd index pairing can be realised as a
Fredholm index as follows. Let~\( A \) be a unital
\Cst-algebra,~\( (\mathcal{A},\mathcal{H},D) \) be an odd
spectral triple over~\( A \) and~\( v\in A \) be a unitary. Then~\( v \)
defines an odd K-theory class~\( [v]\in\K_1(A) \) and~\(
(\mathcal{A},E,D) \) defines an odd \K-homology class~\(
[D]\in\K^1(A) \). Their Kasparov product~\( [v]\times[D] \) is represented
by the Fredholm index of the following operator:
\[ 
    PvP + (1-P) \in\Bdd(\mathcal{H})
\]
where~\( P\defeq\chi_{>0}(D)  \) is the positive spectral projection of~\(
D\). Since the Fredholm index is invariant under a compact perturbation,
one can replace~\( P \) by any operator of the form
\begin{equation}\label{eq:P_1}
    P_1\defeq \frac{\chi(D)+1}{2}
\end{equation}
where~\( \chi\colon \R\to[-1,1] \) is a chopping function (i.e.~\(
\lim_{x\to\pm\infty}\chi(x)=\pm 1 \)).

In order to compute the index of the Fredholm operator~\( PvP+1-P \), it is
necessary to understand the whole spectrum of~\( P \) (or~\( D \)) and~\( v
\), which are operators on the \emph{infinite-dimensional} Hilbert space~\(
\mathcal{H} \). Numerical computation with such objects usually requires an
approximation of~\( \mathcal{H} \) by its finite-dimensional subspaces, and
operators thereon by finite matrices.  However, every Fredholm operator on
a finite-dimensional Hilbert space necessarily has zero index. It is,
therefore, not even clear what finite-dimensional approximation really
means in this context.

\subsubsection*{The odd spectral localiser}
A breakthrough in the numerical computation of index pairings
has been made by Loring and Schulz-Baldes in a series of
articles
\cites{Loring-SBaldes:Odd_spectral_localiser,Loring-SBaldes:Even_spectral_localiser,Loring-SBaldes:Odd_spectral_localiser_sf,Viesca-Schober-SBaldes:Even_spectral_localiser_sf}.
Given an odd unital
spectral triple~\( (\mathcal{A},\mathcal{H},D) \) and a unitary (or
invertible)~\( v\in\mathcal{A} \), Loring and Schulz-Baldes
define the following unbounded, self-adjoint operator
\begin{equation}\label{eq:spectral_localiser}
    L_{\kappa}\defeq \begin{pmatrix}
    \kappa D & v \\ v^* & -\kappa D
    \end{pmatrix}
\end{equation}
and~\( L_{\kappa,\lambda} \) as its truncation onto
the subspace
\begin{equation}\label{eq:spectral_subspace_intro}
    \mathcal{H}_\lambda\defeq \chi_{\lambda}(D\oplus
    D)(\mathcal{H}\oplus \mathcal{H}),
\end{equation}
where~\( \chi_\lambda \) is the characteristic function of the interval~\(
[-\lambda,\lambda] \). The Hilbert space~\( \mathcal{H}_\lambda \)
has finite dimension because~\( D \) has discrete spectrum, thus~\(
L_{\kappa,\lambda} \) is a \emph{finite} matrix. The main theorem of
\cites{Loring-SBaldes:Odd_spectral_localiser,Loring-SBaldes:Odd_spectral_localiser_sf} says that
\begin{quote}
For sufficiently \emph{small} tuning parameter~\( \kappa>0 \) and
sufficiently \emph{large} spectral threshold~\( \lambda>0 \), the odd--odd
index pairing coincides with the half-signature of the
finite-dimensional matrix~\( L_{\kappa,\lambda} \). 
\end{quote}
A similar construction for the even--even index pairing was provided in
\cites{Loring-SBaldes:Even_spectral_localiser,Viesca-Schober-SBaldes:Even_spectral_localiser_sf}.

So far, two different proofs have appeared in the literature. 
One based on fuzzy spheres as well as a slightly different picture
of K-theory
(cf.~\cites{Loring-SBaldes:Odd_spectral_localiser,Loring-SBaldes:Even_spectral_localiser});
another based on spectral flow
(cf.~\cites{Loring-SBaldes:Odd_spectral_localiser_sf,Viesca-Schober-SBaldes:Even_spectral_localiser_sf}).

In this note, we provide a new proof, which clarifies that the spectral
localiser itself can be interpreted as a class in K-theory.
To achieve this, we work with a version of bivariant K-theory,
the \E-theory of Connes and Higson, to compute the index pairing. 
The resulting K-theory class in~\( \K_0(\Cpt_B(E)) \)  
is represented by the formal difference~\( [e_t]-[f_t] \) of a pair of 
\emph{quasi-projections} in~\( \Mat_2(\Cpt_B(E)^+) \), 
which is the image of~\( [v]\in\K_1(A) \) under an 
\emph{asymptotic morphism}~\( (\Phi_t^D)_{t\in[1,\infty)} \) 
constructed from the unbounded Kasparov module~\( (\mathcal{A},E,D) \).

Quasi-projections, or quasi-idempotents, represent K-theory classes in the
spirit of quantitative K-theory
(cf.~\cites{Oyono-Yu:Quantitative_K-theory,Chung:Quantitative_K-theory_Banach_algebras}).
The quasi-projection picture allows for explicit homotopies between the
quasi-projection representative~\( [e_t]-[f_t] \) and 
its truncation~\( [p^e_{t,\lambda}]-[p^f_{t,\lambda}] \)
onto a suitable submodule. 

The existence of such a
submodule is an extra assumption, but is always guaranteed if~\( B=\C \).
An odd Kasparov~\( A \)--\( \C \)-module is the same thing as an odd 
spectral triple~\( (\mathcal{A},\mathcal{H},D) \), 
and a spectral decomposition of~\(
\mathcal{H}\oplus\mathcal{H} \) can always be achieved from a spectral
projection of the unbounded, self-adjoint operator~\( D\oplus D \).
In this situation, we prove that the spectral localiser
\eqref{eq:spectral_localiser} is,
up to congruence by a positive, invertible matrix, given
by the formal difference of quasi-projections~\(
[p^e_{t,\lambda}]-[p^f_{t,\lambda}] \), thus 
represents the index pairing~\( \braket{[v],[D]}\). Hence,
we interpret the odd spectral localiser as a
pairing between an odd K-theory class (represented by a unitary) 
with an asymptotic morphism.

Our method also applies to a class of \emph{numerical} index pairings,
which are compositions of maps of the form
\begin{equation}\label{eq:numerical_index_pairing_intro}
    \K_1(A)\xrightarrow{\times[D]}\K_0(B)\xrightarrow{\tau_*}\R,
\end{equation}
where~\( [D]\in\KK_1(A,B) \) is the class represented by an unbounded
Kasparov~\( A \)--\( B \)-module~\( (\mathcal{A},E,D) \), and~\( \tau \) is
a finite, faithful trace on the unital \Cst-algebra~\( B \). 
We show that such a numerical index pairing can be represented by
a formal difference of~\( \tau \)-finite quasi-projections, and reproduce 
the \emph{semi-finite spectral
localiser} introduced by Schulz-Baldes and Stoiber
\cite{SBaldes-Stoiber:Semifinite_spectral_localizer} from such
quasi-projection representatives.

\subsubsection*{Outline}
We summarise here the basic strategy that allows us to represent the odd
index pairing~\( \braket{[v],[D]} \) by quasi-projections that are
supported on spectral submodules, and how the spectral localiser emerges
from this construction.

\begin{enumerate}
\item An asymptotic morphism is a collection of maps that
``asymptotically'' becomes a \st-homomorphism. From 
an odd, unbounded Kasparov~\( A \)-\( B \)--module 
\( (\mathcal{A},E,D) \), we construct an asymptotic morphism~\(
(\Phi_t^D)_{t\in [1,\infty)} \), where
(cf.~\Cref{app:asymptotic_morphism}):
\begin{equation}\label{eq:Phi_intro}
    \Phi^D_t\colon \Co(\R)\otimes A\to
    \Cpt_B(E),\quad \Phi_t^D(f\otimes a)\defeq
    f(t^{-1}D)a.
\end{equation}
It induces a map
\begin{equation}\label{eq:Phi_induced_map_intro}
    (\Phi_t^D)_*\colon\K_0(\mathrm{S}A)\to\K_0(B),
\end{equation}
which coincides with
the map~\( \K_1(A)\xrightarrow{\times[D]}\K_0(B) \) induced by the
Kasparov product with the class~\( [D]\in\KK_1(A,B) \), up to identifying
\( \K_1(A) \) with~\( \K_0(\mathrm{S}A) \) using the suspension
isomorphism. 

\item The map \eqref{eq:Phi_induced_map_intro} can be described as follows.
Let~\( [e]-[f]\in\K_0(\mathrm{S}A) \) be the image of~\(
[v]\in\K_1(A) \) under the suspension isomorphism given as in
\Cref{lem:odd_K-theory}. Set~\(
\check{e}_t\defeq\Phi_t(e) \) and~\( f_t\defeq \Phi_t(f) \). 
It turns out that
\( (f_t)_{t\in [1,\infty)} \) is a constant family of projections in this
case; and for sufficiently large~\( t \),~\( \check{e}_t \) is a
quasi-idempotent in~\( \Mat_2(\Cpt_B(E)^+) \). Then
the map \eqref{eq:Phi_intro} sends~\( [e]-[f]\in\K_0(\mathrm{S}A) \)
to~\( [\check{e}_t]-[f_t]\in\K_0(\Cpt_B(E)) \), which we may
further replace up to homotopy by~\( [e_t]-[f_t]\in\K_0(\Cpt_B(E)) \) for
\( e_t \) a quasi-projection. 

\item The ``truncation'' step involves an extra parameter~\( \lambda>0 \),
encoding the size of the truncation spectrum. Let~\( \widehat{E}\defeq
E\oplus E \) and assume that there is a direct sum decomposition~\(
\widehat{E}=E_{\lambda}^{\downarrow}\oplus E_{\lambda}^\uparrow \) into
\emph{complemented} submodules, which is ``spectral'' in the sense that 
\( E_{\lambda}^{\downarrow} \) corresponds to the low-lying spectrum of~\(
D\oplus D \). Then for sufficiently large~\(
\lambda \), we may reduce~\( e_t \) 
to its ``spectral diagonal'' up to homotopy of quasi-projections, 
namely, the direct sum of 
truncated operators~\( p^e_{t,\lambda} \) and~\( q^e_{t,\lambda} \) 
on two spectral submodules~\( E_\lambda^\downarrow \) and
\( E_{\lambda}^\uparrow \); and~\( f_t \) is already diagonal
for this decomposition. This yields an
equality of K-theory classes in~\( \K_0(\Cpt_B(E)^+) \):
\[ 
    [e_t]=[p^e_{t,\lambda}]+[q^e_{t,\lambda}],\quad
    [f_t]=[p^f_{t,\lambda}]+[q^f_{t,\lambda}].
\]
We also show that~\( [q^e_{t,\lambda}]=[q^f_{t,\lambda}] \) for
sufficiently large~\( \lambda \). This implies that the odd index pairing 
\( [e_t]-[f_t] \) can equivalently be represented by
\( [p^e_{t,\lambda}]-[p^f_{t,\lambda}] \). Both of them are now supported
on the spectral submodule~\( E_{\lambda}^{\downarrow} \).

\item A spectral triple~\( (\mathcal{A},\mathcal{H},D) \) is the same as an
unbounded Kasparov~\( A \)-\( \C \)--module, and a spectral 
decomposition of~\( \widehat{\mathcal{H}}=\mathcal{H}\oplus \mathcal{H} \) 
may be constructed from spectral projections of~\( D\oplus D \) 
as in \eqref{eq:spectral_subspace_intro}. 
If~\( (\mathcal{A},\mathcal{H},D) \) 
is unital, then~\( D \) has discrete spectrum, 
hence the spectral subspace~\(
\mathcal{H}_\lambda \) is finite-dimensional. We show that
the spectral localiser~\( L_{t^{-1},\lambda} \)
is congruent to~\( \mathcal{L}^e_{t^{-1},\lambda}\defeq 2p^e_{t,\lambda}-1 \). 
It therefore follows
that the half-signature of the spectral localiser coincides with the odd
index pairing. This yields a new proof of the main result in
\cites{Loring-SBaldes:Odd_spectral_localiser,Loring-SBaldes:Odd_spectral_localiser_sf}.

\item Similar ideas can be applied to compute numerical index pairings as
in \eqref{eq:numerical_index_pairing_intro}. In this case,
to accomodate the truncation step, one must pass to a larger
\Cst-algebra generated by all~\( \widehat{\tau} \)-finite projections, 
where~\( \widehat{\tau} \) is the transferred trace on~\( \Cpt_B(E) \).
Then we recover the main result of
\cite{SBaldes-Stoiber:Semifinite_spectral_localizer} in this special
setting.
\end{enumerate}

\subsubsection*{Related works}
Finite-dimensional representatives of K-homology classes
have been considered by Willett in
\cites{Willett:Approximate_ideal_structures,Willett:Bott_periodicity}. 
It was proven 
in~\cite{Willett:Approximate_ideal_structures}*{Section 7} that there exists
an asymptotic 
morphism~\( (\alpha^A_t\colon \mathrm{S}^2A\to A\otimes\Cpt)_{t\in [1,\infty)}\)
such that for each~\( t\in [1,\infty) \) and~\( a\in \mathrm{S}^2A \), 
then~\( \alpha^A_t(a) \) belongs to 
a matrix algebra~\( \Mat_n(A) \). 
In~\cite{Willett:Bott_periodicity}, a finite-dimensional representative of
the Dirac (inverse Bott) element is constructed based on the 
\emph{localisation algebra} of 
Yu~\cite{Yu:Localization_algebras_coarse_Baum-Connes}. 
The explicit computation in~\cite{Willett:Bott_periodicity} has been the
inspiration for computing the odd index pairing via asymptotic morphisms
and quasi-projections.

The localisation algebra provides a model for Kasparov theory
\cite{Dadarlat-Willett-Wu:Localization_algebras}, with the advantage that
the natural transformation~\( \KK_1\to\E_1 \) can be described in an
explicit way, cf.~\cite{Dadarlat-Willett-Wu:Localization_algebras}*{Section
5}. We expect that there is a close relation between this model and the
constructions in the present article.

Connes and van Suijlekom have studied spectral truncation in
\cite{Connes-vSuijlekom:Spectral_truncations}, in order to provide a
framework of coarse graining approximation of geometric space at a finite
resolution. This yields a spectral triple over an \emph{operator system}
(as opposed to a \Cst-algebra).  More recently, van Suijlekom has
introduced a version of K-theory for operator systems in
\cites{vSuijlekom:K-theory_operator_syetems_1,vSuijlekom:K-theory_operator_systems_2},
and shown that the spectral localiser yields a well-defined index map for
it. It is possible that results in the present article might shed some
light on how K-\emph{homology}, or more generally, \emph{bivariant}
K-theory, should be defined for operator systems.

\subsubsection*{Notation and convention}
We fix the following notation. Let~\( A \) be a \Cst-algebra. We
write~\( A^+ \) for its (minimal) unitisation, and~\( A_+ \) for its
positive cone. We write~\( \mathrm{S}A \) for the non-unital 
\Cst-algebra~\( \Co(\R)\otimes A \). A Hilbert~\( A \)-module is always
assumed to be a right~\( A \)-module and its inner product~\(
\braket{\cdot,\cdot} \) is~\( A \)-linear in the second entry.

\subsection*{Acknowledgement}
The work is part of NWO project 613.009.142
``\href{https://www.nwo.nl/projecten/613009142}{Noncommutative index theory
of discrete dynamical systems}''. YL thanks Guo Chuan Thiang and Xingni
Jiang for helpful discussions, and thanks Hang Wang for 
helpful discussion and
warm hospitality during his visit at East China Normal University.
BM thanks Koen van den Dungen, Jens Kaad and Adam Rennie for insightful
conversations. Both BM and YL thank Nigel Higson, Ralf Meyer and Rufus
Willett for illuminating comments.

\section{Preliminaries}

\subsection{Quasi-projections and quasi-idempotents}
Let~\( A \) be a \Cst-algebra and let~\( 0<\varepsilon<\frac{1}{4} \). An
\( \varepsilon \)-\emph{quasi-idempotent} is an element~\( e\in A \)
satisfying~\( \norm{e^2-e}<\frac{1}{4} \). If~\( \varepsilon=\frac{1}{4}
\), then we also call~\( e \) a \emph{quasi-idempotent} for short. An
\( \varepsilon \)-\emph{quasi-projection} is a self-adjoint~\( \varepsilon
\)-quasi-idempotent.

Let~\( e_0,e_1\in A \) be quasi-idempotents. 
A \emph{homotopy} between~\( e_0 \)
and~\( e_1 \) is a norm-continuous path 
\( e\colon [0,1]\to A \), such that each
\( e(t) \) is a quasi-idempotent, and that~\( e(0)=e_0 \) and~\( e(1)=e_1
\). Similarly, a homotopy between quasi-projections is a norm-continuous
path of quasi-projections connecting them.

If~\( e\in A \) 
is a quasi-idempotent, then~\( \spc(e) \) does not meet the line
\[
   L\defeq\set*{z\in\C\;\middle|\;\Re z=\sfrac{1}{2}}
\]
in the complex plane. Hence the function
\[ 
    \kappa_0\colon \C\setminus L\to\{0,1\},\quad \kappa_0(z)\defeq\begin{cases}
    1, & \Re z>\sfrac{1}{2};\\
    0, & \Re z<\sfrac{1}{2},
    \end{cases}
\]
is holomorphic on~\( \spc(e) \). Holomorphic functional calculus then gives
a projection~\( \kappa_0(e)\in A \) as well as a
well-defined class~\( [e]\defeq [\kappa_0(e)]\in\K_0(A) \). Similarly,
every quasi-idempotent~\( e\in\Mat_n(A) \) determines a class in~\( \K_0(A)
\). This allows for ``defining''~\( \K_0(A) \) using quasi-idempotents or
quasi-projections in the same way as projections, in the following sense:

\begin{proposition}[cf.~\cite{Willett-Yu:Higher_index_theory}*{Proposition
2.2.9}]\label{prop:K-theory_quasi-idempotents}
Let~\( A \) be a unital \Cst-algebra. 
Then~\( \K_0(A) \) is isomorphic to the Grothendieck group of the monoid
\( V(A) \), where~\( V(A) \) consists of homotopy-equivalence classes of
quasi-idempotents in~\( \Mat_{\infty}(A) \), and the sum is given by~\(
[e]+[f]\defeq\left[\left(\begin{smallmatrix}
    e & 0 \\ 0 & f
\end{smallmatrix}\right)\right] \). 
\end{proposition}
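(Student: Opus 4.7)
The plan is to exhibit a monoid isomorphism between $V(A)$ and the classical monoid $V_p(A)$ of homotopy-equivalence classes of projections in $\Mat_\infty(A)$; since $\K_0(A)$ is by definition the Grothendieck group of $V_p(A)$, this yields the statement after passing to Grothendieck groups. The candidate is $\Phi\colon V(A)\to V_p(A)$, $[e]\mapsto [\kappa_0(e)]$, built from the holomorphic functional calculus introduced above.

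Well-definedness of $\Phi$ on homotopy classes is a uniform contour integral argument. Any quasi-idempotent has spectrum in the open set $U\defeq\{z\in\C : |z(z-1)|<\tfrac14\}$, which splits into two connected components $U_0\ni 0$ and $U_1\ni 1$, both disjoint from $L$. Along a norm-continuous path of quasi-idempotents the spectra remain in a fixed compact subset of $U$, so a single contour $\Gamma$ enclosing $U_1$ can be chosen uniformly, and the Riesz formula $\kappa_0(e)=\frac{1}{2\pi i}\oint_\Gamma (z-e)^{-1}\,dz$ produces a norm-continuous path of projections. Additivity $\kappa_0(e\oplus f) = \kappa_0(e)\oplus \kappa_0(f)$ is automatic from the multiplicativity of the holomorphic functional calculus, and surjectivity of $\Phi$ is trivial since projections are fixed points of $\kappa_0$.

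The main technical obstacle is injectivity of $\Phi$, which reduces to showing that every quasi-idempotent $e\in \Mat_n(A)$ is homotopic, through quasi-idempotents, to $p\defeq\kappa_0(e)$. The starting observation is that $p$ is a holomorphic function of $e$ and therefore commutes with it, yielding the orthogonal decomposition $e = pep + (1-p)e(1-p)$ into summands in the corners $pAp$ and $(1-p)A(1-p)$, whose spectra lie in $U_1$ and $U_0$ respectively; this reduces the problem to the two cases $p=0$ and $p=\mathbf{1}$, which are dual via $e\mapsto \mathbf{1}-e$. The subtlety is that the naive straight-line homotopy $s\mapsto(1-s)e+sp$ need not preserve quasi-idempotency when $e$ is non-normal, because $\|e-p\|$ is not a priori controlled by $\|e^2-e\|$ in that setting (one can check that rescaling a nilpotent off-diagonal block keeps $\|e^2-e\|$ small while making $\|e-p\|$ arbitrarily large). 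Instead I would proceed in two stages, first deforming $e$ through a family of holomorphic functions that sharpen the spectrum onto the two components of $U$ (thereby driving $\|e^2-e\|$ to zero while keeping the path inside quasi-idempotents), and then invoking the straight-line homotopy once $e$ is norm-close to its spectral projection $\kappa_0(e)$.

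Passing to Grothendieck groups finally gives the stated isomorphism with $\K_0(A)$; inverses and stabilisation are handled by the standard direct-sum tricks, using that projections are automatically quasi-idempotents so that stabilisation by a projection $q$ is admissible in both $V(A)$ and $V_p(A)$.
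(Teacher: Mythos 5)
The paper does not prove this proposition: it is stated with a citation to Willett--Yu's textbook, so there is no internal ``paper's own proof'' to compare against. What follows is therefore an assessment of whether your argument is sound.

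Your overall plan is the right one: define $\Phi([e])\defeq[\kappa_0(e)]$, check well-definedness via a uniform contour, check additivity and surjectivity, and reduce injectivity to the statement that every quasi-idempotent $e$ is homotopic, through quasi-idempotents, to $\kappa_0(e)$. The reduction to the two cases $p=0$ and $p=\mathbf{1}$ via the corner decomposition $e = pep + (1-p)e(1-p)$ is correct. You are also right to be suspicious of the naive straight-line homotopy: $\norm*{e-\kappa_0(e)}$ is genuinely not controlled by $\norm*{e^2-e}$ in the non-normal case.

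The problem is that your proposed two-stage fix does not repair this, for two independent reasons. First, the claim that a holomorphic deformation $f_s(e)$ ``keeps the path inside quasi-idempotents'' is unsupported. Writing $(f_s^2-f_s)(z) = g_s(z)(z^2-z)$ for a holomorphic $g_s$ on $U$ gives $f_s(e)^2-f_s(e)=g_s(e)(e^2-e)$, but $\norm*{g_s(e)}$ is \emph{not} bounded by $\sup_U\abs*{g_s}$ when $e$ is non-normal -- functional calculus controls the spectrum of $f_s(e)^2-f_s(e)$, hence its spectral radius, but not its operator norm, which is exactly the quantity appearing in the definition of quasi-idempotent. Second, and more seriously, the premise of your second stage -- ``once $e$ is norm-close to its spectral projection $\kappa_0(e)$'' -- is simply never achieved: $\norm*{e^2-e}$ small does not imply $\norm*{e-\kappa_0(e)}$ small. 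Take
\[
e = \begin{pmatrix} 1+iy & b \\ 0 & -iy \end{pmatrix}, \qquad y \text{ small},\ b\to\infty.
\]
Then $e^2-e = (iy-y^2)\cdot\mathbf{1}$, so $\norm*{e^2-e} = y\sqrt{1+y^2}$ is small and independent of $b$, yet $\kappa_0(e)=\left(\begin{smallmatrix}1 & b/(1+2iy) \\ 0 & 0\end{smallmatrix}\right)$ and $\norm*{e-\kappa_0(e)}\geq 2\abs*{b}y/\sqrt{1+4y^2}\to\infty$. (For self-adjoint $e$ the implication you want does hold by the spectral theorem, but self-adjointness is precisely what is absent here, which you were right to flag in the first place.) So after your ``sharpening'' stage you are not in a position to invoke \Cref{lem:homotopy_quasi-idempotents} or \Cref{thm:close_quasi-idempotents_homotopic}, and the argument stalls.

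To close the gap you would need an actual norm estimate, e.g.\ a resolvent bound on a uniform contour around $\spc(e)$ so that $\norm*{g_s(e)}$ is controlled -- this requires work because the resolvent of a non-normal $e$ can blow up near the spectrum. An alternative route, which is sufficient at the level of the Grothendieck group (the object the proposition is actually about) even if it does not produce a monoid isomorphism on the nose, is to show that $e$ and $\kappa_0(e)$ are \emph{stably similar} via an explicit invertible in $\Mat_{2n}(A^+)$ and then use the standard fact that stably similar idempotents define the same $\K_0$-class; this avoids the delicate quasi-idempotent norm estimates entirely.
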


As opposed to projections or idempotents, quasi-idempotents and
quasi-projections form open sets in~\( A \). This is convenient for
constructing homotopies between them, and allowing more representatives for
classes in~\( \K_0(A) \). We provide below
a few quantitative estimates related to quasi-idempotents. Many
similar but not identical results have been used in the context of
quantitative K-theory for \Cst-algebras or Banach algebras, 
cf.~\cites{Oyono-Yu:Quantitative_K-theory,Chung:Quantitative_K-theory_Banach_algebras}.

\begin{lemma}\label{lem:estimate_f2-f}
Let~\( e,f\in A \), then we have the following inequality\textup{:}
\[ 
    \norm{f^2-f}\leq(2\norm*{e}+\norm{e-f}+1)\norm*{e-f}+\norm{e^2-e}.
\]
\end{lemma}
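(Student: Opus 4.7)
The plan is to reduce the claim to pure algebra plus the triangle inequality. The trick is to write \( f = e + (f-e) \) and treat \( f-e \) as a small perturbation, so that \( f^2-f \) splits as the obstruction \( e^2-e \) plus correction terms of first and second order in \( f-e \).

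Concretely, I expand
\[
    f^2 - f = \bigl(e + (f-e)\bigr)^2 - \bigl(e + (f-e)\bigr)
    = (e^2 - e) + e(f-e) + (f-e)e + (f-e)^2 - (f-e).
\]
Applying the triangle inequality and submultiplicativity of the norm to each of the four correction terms gives
\[
    \norm{f^2 - f} \leq \norm{e^2 - e} + 2\norm{e}\,\norm{e-f} + \norm{e-f}^2 + \norm{e-f},
\]
and factoring \( \norm{e-f} \) out of the last three summands produces the stated bound \( (2\norm{e} + \norm{e-f} + 1)\norm{e-f} + \norm{e^2 - e} \).

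There is no real obstacle here: the only thing to watch is that \( A \) is not assumed commutative, so in the expansion one must keep the cross terms \( e(f-e) \) and \( (f-e)e \) separate rather than combining them as \( 2e(f-e) \). Since both are bounded by \( \norm{e}\,\norm{e-f} \), the estimate still combines into the factor \( 2\norm{e} \) as written. No positivity, self-adjointness or invertibility hypothesis is needed, which matches the fact that the lemma is stated for arbitrary elements \( e,f\in A \).
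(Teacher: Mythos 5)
Your decomposition $f^2-f = (e^2-e) + e(f-e) + (f-e)e + (f-e)^2 - (f-e)$ is term-for-term the same identity the paper uses (it writes $-(f-e)$ as $(e-f)$), and the triangle-inequality estimate and regrouping are likewise identical. Correct, and essentially the paper's own proof.
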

\begin{proof}
This follows from
\[ 
    f^2-f=(f-e)(f-e)+e(f-e)+(f-e)e+(e-f)+(e^2-e). \qedhere
\]
\end{proof}

\begin{lemma}\label{lem:homotopy_quasi-idempotents}
Let~\( e\in A \) be an~\( \varepsilon_e \)-quasi-idempotent,~\( f\in A \) an
\( \varepsilon_f \)-quasi-idempotent. Let~\( \delta\defeq\norm*{e-f} \).
Assume that
\[
    \max\{\varepsilon_e,\varepsilon_f\}+\frac{1}{4}\delta^2<\frac{1}{4},
\]
then~\( e \) and~\( f \) are homotopic as quasi-idempotents.    
\end{lemma}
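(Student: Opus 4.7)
The plan is to use the obvious straight-line homotopy \( h_t\defeq (1-t)e+tf \), for \( t\in[0,1] \), which is manifestly norm-continuous and interpolates between \( e \) and \( f \). The only thing to check is that \( h_t \) is a quasi-idempotent for every \( t\in[0,1] \), i.e.\ that \( \norm{h_t^2-h_t}<\frac{1}{4} \), and the hypothesis on \( \max\{\varepsilon_e,\varepsilon_f\}+\frac{1}{4}\delta^2 \) is begging to come out of a single algebraic identity.

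The key step is to establish the identity
\[
    h_t^2-h_t=(1-t)(e^2-e)+t(f^2-f)-t(1-t)(e-f)^2.
\]
I would verify this by direct expansion: subtracting the linear combination \( (1-t)(e^2-e)+t(f^2-f) \) from \( h_t^2-h_t \), the linear terms cancel, and the remaining quadratic part is
\[
    (1-t)^2e^2+t(1-t)(ef+fe)+t^2f^2-(1-t)e^2-tf^2=-t(1-t)(e-f)^2.
\]
No hypothesis on \( e \) or \( f \) is used here; this is just an identity in any associative algebra.

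Once the identity is in place, the triangle inequality and \( t(1-t)\le\frac{1}{4} \) on \( [0,1] \) give
\[
    \norm{h_t^2-h_t}\le(1-t)\varepsilon_e+t\varepsilon_f+t(1-t)\delta^2\le\max\{\varepsilon_e,\varepsilon_f\}+\tfrac{1}{4}\delta^2,
\]
which is \( <\frac{1}{4} \) by assumption. Hence \( h_t \) is a quasi-idempotent for every \( t\in[0,1] \), which provides the required homotopy.

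The computation is essentially routine once one spots the identity; arguably there is no real obstacle, but the one pitfall worth flagging is that a naive bound using \Cref{lem:estimate_f2-f} to estimate \( \norm{h_t^2-h_t} \) would yield a weaker estimate with a linear \( \delta \)-term that does not match the \( \frac{1}{4}\delta^2 \) threshold in the hypothesis. The correct bound depends crucially on cancellations exposed by the convex combination identity above.
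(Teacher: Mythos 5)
Your proof is correct and is essentially identical to the paper's: the same straight-line homotopy, the same convex-combination identity $\big((1-s)e+sf\big)^2-\big((1-s)e+sf\big)=(1-s)(e^2-e)+s(f^2-f)-s(1-s)(e-f)^2$, and the same $s(1-s)\le\frac{1}{4}$ bound.
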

\begin{proof}
For each~\( s\in[0,1] \), we have
\begin{align*} 
&\big((1-s)e+sf\big)^2-\big((1-s)e+sf\big)\\
=&(1-s)(e^2-e)+(s^2-s)e^2+s(f^2-f)+(s^2-s)f^2+(s-s^2)(ef+fe) \\
=&(1-s)(e^2-e)+s(f^2-f)+(s^2-s)(e-f)^2.
\end{align*}
Thus
\[ 
\norm*{\big((1-s)e+sf\big)^2-\big((1-s)e+sf\big)}<\max\{\varepsilon_e,\varepsilon_f\}+\frac{1}{4}\norm*{e-f}^2
<\frac{1}{4}.
\]
So~\( s\mapsto (1-s)e+sf \) is a homotopy of quasi-idempotents between~\( e
\) and~\( f \).    
\end{proof}

Combining the two preceding lemmas, we have the following
\begin{theorem}\label{thm:close_quasi-idempotents_homotopic}
Let~\( A \) be a \Cst-algebra and~\( e,f\in A \). Let~\( \delta\defeq
\norm*{e-f} \).   
\begin{enumerate}
\item If~\( e \) is an~\( \varepsilon \)-quasi-idempotent and
\( f \) satisfies
\[ 
    (2\norm*{e}+\frac{5}{4}\delta+1)\delta+\varepsilon<\frac{1}{4}.
\]
Then~\( f \) is a~\( ((2\norm*{e}+\delta+1)\delta+\varepsilon)
\)-quasi-idempotent, which is homotopic to~\( e \) via a
path of quasi-idempotents.
\item As a special case, if~\( e \) is a projection and~\( f\in A \) is
self-adjoint satisfying~\( \delta<\frac{1}{17} \).
Then~\( f \) is a~\( 4\delta \)-quasi-projection, which is homotopic to
\( e \) as quasi-projections.  
\end{enumerate}
\end{theorem}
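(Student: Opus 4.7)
The plan is a direct combination of Lemma~\ref{lem:estimate_f2-f} and Lemma~\ref{lem:homotopy_quasi-idempotents}, with a short algebraic identity reconciling the two hypotheses. Part~(2) will drop out as a specialisation.

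For part~(1), I would first apply Lemma~\ref{lem:estimate_f2-f} to obtain
\[
    \norm*{f^2-f}\leq(2\norm*{e}+\delta+1)\delta+\varepsilon\eqdef\varepsilon_f.
\]
The hypothesis of~(1) is the strictly larger quantity $(2\norm*{e}+\tfrac{5}{4}\delta+1)\delta+\varepsilon<\tfrac{1}{4}$, so \emph{a fortiori} $\varepsilon_f<\tfrac{1}{4}$. This certifies $f$ as an $\varepsilon_f$-quasi-idempotent with precisely the advertised parameter.

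Next I would feed $\varepsilon_e=\varepsilon$ and $\varepsilon_f$ as above into Lemma~\ref{lem:homotopy_quasi-idempotents}. Since $\varepsilon_f\geq\varepsilon$, the required inequality $\max\{\varepsilon_e,\varepsilon_f\}+\tfrac{1}{4}\delta^2<\tfrac{1}{4}$ reduces to $\varepsilon_f+\tfrac{1}{4}\delta^2<\tfrac{1}{4}$. The key identity
\[
    \varepsilon_f+\tfrac{1}{4}\delta^2
    =2\norm*{e}\delta+\delta^2+\delta+\varepsilon+\tfrac{1}{4}\delta^2
    =\bigl(2\norm*{e}+\tfrac{5}{4}\delta+1\bigr)\delta+\varepsilon
\]
shows that this is exactly the hypothesis of~(1). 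The linear path from Lemma~\ref{lem:homotopy_quasi-idempotents} then connects $e$ to $f$ through quasi-idempotents.

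Part~(2) specialises by taking $\varepsilon=0$ and $\norm*{e}\leq 1$. Self-adjointness of $f$ ensures that the linear path $(1-s)e+sf$ consists of self-adjoint elements, so the quasi-idempotent homotopy from~(1) upgrades to a homotopy of quasi-projections. The parameter becomes $\varepsilon_f\leq(3+\delta)\delta<4\delta$ under $\delta<1$, and the hypothesis of~(1) reduces to $(3+\tfrac{5}{4}\delta)\delta<\tfrac{1}{4}$; a short computation shows $\delta<\tfrac{1}{17}$ suffices. There is no genuine obstacle to overcome here; the only subtlety is matching the slightly asymmetric coefficients $\delta$ versus $\tfrac{5}{4}\delta$ in the hypotheses of the two auxiliary lemmas, and this is precisely what the identity above achieves.
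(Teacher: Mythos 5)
Your proof is correct and follows the paper's own route: apply \Cref{lem:estimate_f2-f} to certify $f$ as an $\varepsilon_f$-quasi-idempotent with $\varepsilon_f=(2\norm{e}+\delta+1)\delta+\varepsilon$, then feed $\varepsilon_e=\varepsilon$ and $\varepsilon_f$ into \Cref{lem:homotopy_quasi-idempotents}. Your observation that $\varepsilon_f+\tfrac{1}{4}\delta^2=(2\norm{e}+\tfrac{5}{4}\delta+1)\delta+\varepsilon$ holds as an \emph{exact} identity is a small sharpening of the paper's presentation (which writes it as a strict inequality), but the overall argument and the specialisation in part~(2) are identical.
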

\begin{proof}
Let~\( e \) be an~\( \varepsilon \)-quasi-idempotent. It follows from the
\Cref{lem:estimate_f2-f} that~\( f \) is an
\( \varepsilon_f \)-quasi-idempotent, where 
\[ 
    \varepsilon_f\defeq (2\norm*{e}+\delta+1)\delta+\varepsilon;
\]
and
\[ 
    \max\{\varepsilon_e,\varepsilon_f\}+\frac{1}{4}\delta^2<(2\norm*{e}+\frac{5}{4}\delta+1)\delta+\varepsilon<\frac{1}{4}.
\]
Then it follows from the assumption and
\Cref{lem:homotopy_quasi-idempotents} that~\( e \) and~\(
f\) are homotopic through the straightline path connecting them.

If~\( e \) is a projection and~\( \delta<\frac{1}{17} \), 
then~\( \varepsilon=0 \) and~\(
\norm*{e}=1 \). So 
\[ 
    (2\norm*{e}+\frac{5}{4}\delta+1)\delta+\varepsilon<(2+\frac{5}{4}+1)\delta<\frac{1}{4}.
\]
If~\( f \) is self-adjoint, then by (1), 
\( f \) is~\( 4\delta \)-quasi-projection which is homotopic to~\(
e\) via the straightline homotopy connecting~\( e \) and~\( f \) because
\[ 
    (2\norm{e}+\delta+1)\delta+\varepsilon<(2+1+1)\delta=4\delta.
\]
Since both~\( e \) and~\( f \) are self-adjoint, it follows that~\(
(1-s)e+sf \) is self-adjoint for each~\( s\in[0,1] \). This implies that
the~\( e \) and~\( f \) are homotopic through a path of self-adjoint
quasi-idempotents, i.e.~quasi-projections.       
\end{proof}

\subsection{Unbounded Kasparov modules}
We recall the definition of odd, unbounded Kasparov modules. In particular,
we explain how an unbounded Kasparov module determines
an \emph{extension class}, that is, a class in~\( \KK_1(A,B) \) represented
by an extension. This is required for Connes--Higson's construction of 
asymptotic morphism in \Cref{lem:Connes-Higson_extension_to_E}.

Let~\( \mathcal{E} \) and~\( \mathcal{F} \)
be Hilbert~\( B \)-modules for a \Cst-algebra~\( B \), we write:
\begin{itemize}
\item \( \Cpt_B(\mathcal{E}) \) for the \Cst-algebra of compact operators
on~\( \mathcal{E} \); 
\item \( \Fin^*_B(\mathcal{E}) \) for the algebra of all finite-rank
operators on~\( \mathcal{E}\);
\item \( \End^*_B(\mathcal{E}) \) for the \Cst-algebras of bounded
adjointable operators on~\( \mathcal{E} \);
\item \( \Hom^*_B(\mathcal{E},\mathcal{F}) \) for the space of all~\( B
\)-linear, bounded adjointable operators~\( \mathcal{E}\to \mathcal{F} \).
\end{itemize}
\begin{definition}
Let~\( A \) and~\( B \) be \Cst-algebras. An \emph{odd unbounded Kasparov
\( A \)-\( B \)--module} is a triple~\( (\mathcal{A},E,D) \), where: 
\begin{itemize} 
\item \( E \) is a Hilbert~\( B \)-module;
\item \( \varrho\colon A\to\End^*_B(E) \) is an \emph{essential}
\st-homomorphism, that is,
\( \overline{\varrho(A)E}=E \); and
\( \mathcal{A}\subseteq A \) is a dense \st-subalgebra; we shall always
abbreviate~\( \varrho(a) \) as~\( a \).  
\item \( D\colon\dom D\subseteq E\to E \) 
is a self-adjoint and regular operator.
\end{itemize} 
such that:
\begin{itemize} 
\item \( \varrho(a)(D+\mathrm{i})^{-1}\in\Cpt_B(E) \) 
for all~\( a\in\mathcal{A} \) (and hence for
all~\( a\in A \));
\item For every~\( a\in\mathcal{A} \),~\( \varrho(a) \) maps~\( \dom D \)
into~\( \dom D \), and~\( [D,\varrho(a)] \)
extends to an element of~\( \End^*_B(E) \).
\end{itemize}
\end{definition}

\begin{remark}
We note that in the common definition of odd unbounded Kasparov modules,
\( \varrho\colon A\to\End_B^*(E) \) is \emph{not} assumed to be
essential. However, this turns out to be necessary for several 
constructions of the present paper, in particular, in
\Cref{app:asymptotic_morphism}. Nevertheless, every class in~\(
\KK_1(A,B) \) can be represented by an essential odd unbounded
Kasparov module.

If~\( A \) is unital, then 
an essential \st-homomorphism~\( \varrho\colon A\to\End_B^*(E) \) is
always unital. This implies that~\(
(D+\mathrm{i})^{-1}=\varrho(1)(D+\mathrm{i})^{-1}\in\Cpt_B(E) \). If
moreover~\( B=\C \), then~\( D \) has compact resolvent, hence has discrete
spectrum.
\end{remark}

We follow the line of \cite{Kasparov:K-functor_extension}*{Section 7} to
construct an extension of \Cst-algebras from an odd bounded Kasparov
module. Let~\( \chi\colon \R\to[-1,1] \) be a chopping function, i.e.~\(
\lim_{x\to\pm \infty}\chi(x)=\pm 1 \), then 
the \emph{bounded transform}
\[ 
    (\mathcal{A},E,D)\quad\longmapsto\quad (A,E,\chi(D))
\]
gives a bounded Kasparov module 
(cf.~\cite{Baaj-Julg:Unbounded_Kasparov_theory}*{Proposition 2.2} for the
special case~\( \chi(x)\defeq x(1+x^2)^{-1/2} \)). 
Then the unbounded Kasparov module~\(
(\mathcal{A},E,D) \) gives rise to a Busby invariant 
\[ \beta\colon A\to\End_B^*(E)/\Cpt_B(E),\quad
a\mapsto q(P_1 aP_1) \]
where~\( P_1\defeq \frac{1}{2}(\chi(D)+1) \) is as in \eqref{eq:P_1}, and 
\( q\colon\End^*_B(E)\to \End^*_B(E)/\Cpt_B(E) \) is the quotient map. 
This Busby invariant 
corresponds to the semi-split extension 
\begin{equation}\label{eq:Kasparov_KK_to_extension} 
\begin{tikzcd}
\Cpt_B(E) \arrow[r, tail] & M
\arrow[r, two heads] & A \arrow[l,bend left, "s"] 
\end{tikzcd} 
\end{equation} 
with~\( M \) the \Cst-algebra
\[
M\defeq \set*{(a,T)\in A\oplus
\End_B^*(E)\;\middle|\;\beta(\varrho(a))=q(T)} \]
with the semi-split section~\( s\colon A\to M \) given by
\( s(a)=(a,P_1\varrho(a)P_1) \).

The odd unbounded Kasparov module~\( (\mathcal{A},E,D) \) represents a
class in~\( [D]\in\KK_1(A,B) \). Via the Kasparov product
\eqref{eq:KK_index_pairing}, 
it gives a map
\[ 
    \K_1(A)\xrightarrow{\times[D]}\K_0(B),
\]
which can now be rephrased as the following composition of maps:
\begin{equation}\label{eq:K-theoretic_index_pairing}
\K_1(A)\xrightarrow{\partial}\K_0(\Cpt_B(E))\xrightarrow{[E]}\K_0(B),
\end{equation}
where~\( \partial \) is the boundary map of the extension
\eqref{eq:Kasparov_KK_to_extension}, and 
the map~\( [E]\colon \K_0(\Cpt_B(E))\to\K_0(B) \) is induced by the proper
\Cst--\( \Cpt_B(E) \)-\( B \)--correspondence~\( E \). If~\( E \) is a
\emph{full} Hilbert~\( B \)-module, then~\( E \) becomes an imprimitivity
bimodule between~\( \Cpt_B(E) \) and~\( B \), and the induced map~\( [E] \)
is an isomorphism.

In the special case~\( B=\C \), then an odd unbounded Kaspaorv~\( A \)-\(
B\)--module is the same thing as an odd spectral triple~\(
(\mathcal{A},\mathcal{H},D) \). Every Hilbert space~\( \mathcal{H} \) 
is full as a Hilbert~\( \C \)-module, thus implements a Morita--Rieffel 
equivalence between~\( \Cpt(\mathcal{H}) \) and~\( \C \).

In the rest of this article, we shall write~\( \braket{[v],[D]} \) for the
image of~\( [v]\in\K_1(A) \) under the map~\( \partial\colon
\K_1(A)\to\K_0(\Cpt_B(E)) \), and call it the
\emph{\textup{(}\K-theoretic\textup{)} index pairing}. The Kasparov product
between the class~\( [v]\in\K_1(A) \) and~\( [D]\in\KK_1(A,B) \) will be
denoted by~\( [v]\times[D] \).   

\subsection{Asymptotic morphisms}
Let~\( A \) and~\( B \) be \Cst-algebras. 
An \emph{asymptotic morphism} from~\( A \) to~\( B \)
is a family of maps~\( \Phi_t\colon
A\to B \), parametrised by~\( t\in [1,\infty) \), that ``asymptotically''
becomes a \st-homomorphism. Such an asymptotic morphism defines an element
in the E-theory group~\( \E(A,B) \), which via the E-theory product
\begin{equation}\label{eq:E_index_pairing}\tag{E}
    \underbrace{\E_i(\C,A)}_{\simeq\K_i(A)}
    \times\E_j(A,B)\to\underbrace{\E_{i+j}(\C,B)}_{\simeq\K_{i+j}(B)},
\end{equation}
induces a group homomorphism~\( \K_0(A)\to\K_0(B) \).

We recall form~\cite{Connes-Higson:E-theory} 
some basic facts of asymptotic morphisms. 
We fix the following notation: let~\( (a_t)_{t\in[1,\infty)} \) 
and~\( (b_t)_{t\in[1,\infty)} \) be two nets of elements in
the same \Cst-algebra, then we write
\[ 
    a_t\sim b_t\qquad\text{iff}\qquad
    \lim_{t\to \infty}\norm*{a_t-b_t}=0.  
\]

\begin{definition}\label{def:asymptotic_morphism}
Let~\( A \) and~\( B \) be \Cst-algebras. An \emph{asymptotic morphism} from~\( A \) to
\( B \) consists of a family of maps~\( (\Phi_t\colon A\to
B)_{t\in[1,\infty)} \) such that:
\begin{enumerate}
\item \( t\mapsto \Phi_t(a) \) is continuous for all~\( a\in A \);
\item for all
\( a,a'\in A \) and~\( \lambda\in\C \),
\[ \Phi_t(a^*)\sim\Phi_t(a)^*,\quad
\Phi_t(aa')\sim\Phi_t(a)\Phi_t(a'),\quad \Phi_t(a+\lambda
a')\sim\Phi_t(a)+\lambda\Phi_t(a'). \]
\end{enumerate}
\end{definition}

\begin{definition}
Two asymptotic morphisms~\( (\Phi_t\colon A\to B) \) and
\( (\Psi_t\colon A\to B) \) are \emph{\textup{(}asymptotically\textup{)}
equivalent}, if for all~\( a\in A \), it follows that 
\(\Phi_t(a)\sim\Psi_t(a)\).

Two asymptotic morphisms~\( (\Phi^0_t\colon A\to B) \) and~\( (\Phi^1_t\colon
A\to B) \) are \emph{\textup{(}asymptotically\textup{)} homotopic}, 
if there exists an asymptotic morphism
\( \Phi_t\colon A\to\Cont([0,1],B) \) such that
\( \ev_i\circ\Phi_t\sim\Phi_t^i \) for~\( i=0,1 \).
\end{definition}

\begin{remark}
If two asymptotic morphisms are equivalent, then they are homotopic via a
straight line connecting them.
\end{remark}

\begin{definition}[Extension to matrix algebras and unitisations]
Let~\( (\Phi_t\colon A\to B) \) be an asymptotic morphism, then for each~\(
n\in\N\),~\( (\Phi_t) \)  extends
to an asymptotic morphism from~\( \Mat_n(A^+) \) to~\( \Mat_n(B^+) \)
(still denoted~\( \Phi_t \) for simplicity)
as follows: one first extends~\( \Phi_t \) to the minimal 
unitisations of~\( A \) and~\( B \)  by setting
\[ 
    \Phi_t(a,\lambda)\defeq (\Phi_t^+(a),\lambda),
\]
then extend to their matrix algebras by applying~\( \Phi_t \) entrywise. 
\end{definition}

The following result is well-known,
cf.~\cite{Guentner-Higson-Trout:Equivariant_E-theory}*{Chapter 1}:

\begin{proposition}
Let~\( (\Phi_t\colon A\to B) \) be an asymptotic morphism, then it induces a
map~\( \Phi_*\colon \K_0(A)\to\K_0(B) \) as follows: let~\( p\in\Mat_n(A^+)
\) be a projection. By definition, for sufficiently large~\( t_0>0 \), there
exists a projection~\( q_t \) that is close to~\( \Phi_t(p) \) for each~\(
t>t_0\), and all of these~\( q_t \)'s for~\( t>t_0 \)  
are homotopic. This gives a well-defined class in~\( \K_0(B) \).     
\end{proposition}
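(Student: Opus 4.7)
The plan is to leverage the quasi-projection machinery of \Cref{lem:estimate_f2-f,lem:homotopy_quasi-idempotents,thm:close_quasi-idempotents_homotopic} to extract an honest projection from the image~\( \Phi_t(p) \), which is only asymptotically a projection. First I would observe that since~\( p\in\Mat_n(A^+) \) satisfies~\( p=p^*=p^2 \) and~\( \Phi_t \) extends entrywise to matrix algebras and unitisations, the asymptotic properties in \Cref{def:asymptotic_morphism} yield~\( \Phi_t(p)^*\sim\Phi_t(p) \) and~\( \Phi_t(p)^2\sim\Phi_t(p) \). Defining the symmetrisation~\( \widetilde{q}_t\defeq\frac{1}{2}(\Phi_t(p)+\Phi_t(p)^*) \), this element is self-adjoint for every~\( t \) and~\( \widetilde{q}_t\sim\Phi_t(p) \); combined with the above, \( \norm{\widetilde{q}_t^2-\widetilde{q}_t}<\frac{1}{4} \) for~\( t\geq t_0 \) with~\( t_0 \) sufficiently large. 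Hence~\( \widetilde{q}_t \) is a quasi-projection in~\( \Mat_n(B^+) \), and the holomorphic functional calculus with the indicator~\( \kappa_0 \) of the excerpt produces an actual projection~\( q_t\defeq\kappa_0(\widetilde{q}_t)\in\Mat_n(B^+) \) close to~\( \widetilde{q}_t \), hence close to~\( \Phi_t(p) \).

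Next I would establish that the class~\( [q_t]\in\K_0(B) \) does not depend on~\( t>t_0 \). Continuity of~\( t\mapsto \Phi_t(p) \) implies norm-continuity of~\( t\mapsto\widetilde{q}_t \). For~\( t_1,t_2>t_0 \) close enough that~\( \norm{\widetilde{q}_{t_1}-\widetilde{q}_{t_2}} \) satisfies the hypothesis of \Cref{lem:homotopy_quasi-idempotents}, the straight-line interpolation~\( s\mapsto(1-s)\widetilde{q}_{t_1}+s\widetilde{q}_{t_2} \) is a continuous homotopy of self-adjoint quasi-projections; applying~\( \kappa_0 \) along this path yields a norm-continuous path of genuine projections connecting~\( q_{t_1} \) and~\( q_{t_2} \). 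For arbitrary~\( t_1<t_2 \) in~\( (t_0,\infty) \), a finite partition of~\( [t_1,t_2] \) by compactness together with concatenation of the corresponding straight-line homotopies establishes~\( [q_{t_1}]=[q_{t_2}] \), and we set~\( \Phi_*[p]\defeq[q_t] \).

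Well-definedness on~\( \K_0(A) \) follows by applying the same reasoning to a norm-continuous path~\( s\mapsto p_s \) of projections in~\( \Mat_n(A^+) \): the family~\( \widetilde{q}_t^{(s)} \) is jointly continuous in~\( (s,t) \), so uniform continuity on the compact set~\( [0,1]\times[t_0,t_1] \) provides a single threshold beyond which all~\( \widetilde{q}_t^{(s)} \) are quasi-projections, and the same partitioning argument in the variable~\( s \) yields a continuous path of projections~\( s\mapsto\kappa_0(\widetilde{q}_t^{(s)}) \) from~\( \kappa_0(\widetilde{q}_t^{(0)}) \) to~\( \kappa_0(\widetilde{q}_t^{(1)}) \). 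Additivity under direct sum is immediate from the entrywise extension together with the behaviour of~\( \kappa_0 \) on block-diagonal quasi-projections. The main obstacle is the uniform coordination of the threshold~\( t_0 \): for a single projection this is routine, but in the homotopy argument a single~\( t_0 \) must work simultaneously for all homotopy parameters~\( s\in[0,1] \); this is precisely what compactness and uniform continuity deliver, keeping the estimate~\( \norm{(\widetilde{q}_t^{(s)})^2-\widetilde{q}_t^{(s)}}<\frac{1}{4} \) stable along the whole path.
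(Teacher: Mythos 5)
The paper does not supply a proof of this proposition; it explicitly labels the result as ``well-known'' and refers to Guentner--Higson--Trout, Chapter~1. So strictly speaking there is no in-paper argument to compare against. Your strategy --- symmetrise~\( \Phi_t(p) \) to a self-adjoint~\( \widetilde{q}_t \), verify the quasi-projection inequality for large~\( t \), apply~\( \kappa_0 \), and then use \Cref{lem:homotopy_quasi-idempotents} and \Cref{thm:close_quasi-idempotents_homotopic} on finite partitions --- is in the right spirit and makes good use of the quasi-projection machinery the paper develops in Section~2.1. The~\( t \)-independence part is fine: on a compact interval~\( [t_1,t_2] \) the map~\( t\mapsto\widetilde{q}_t \) is uniformly continuous, a finite partition gives short straight-line homotopies of quasi-projections, and the K-theory class is preserved.

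There is, however, a genuine gap in the homotopy-invariance step. You claim that joint continuity of~\( (s,t)\mapsto\widetilde{q}_t^{(s)} \) plus ``uniform continuity on the compact set~\( [0,1]\times[t_0,t_1] \)'' yields a \emph{single} threshold~\( T \) such that~\( \widetilde{q}_t^{(s)} \) is a quasi-projection for all~\( s\in[0,1] \) and all~\( t\geq T \). This is not a consequence of compactness: uniform continuity on~\( [0,1]\times[t_0,t_1] \) says nothing about~\( t>t_1 \), and the asymptotic relations in \Cref{def:asymptotic_morphism} are pointwise in the algebra element, not locally uniform over a parameter space. (One can cook up jointly continuous~\( g(s,t)\geq 0 \) with~\( g(s,t)\to 0 \) as~\( t\to\infty \) for every fixed~\( s \) but~\( \sup_s g(s,t)\equiv 1 \).) The path~\( s\mapsto\kappa_0(\widetilde{q}_t^{(s)}) \) you want to write down is therefore not yet known to be defined at any one~\( t \). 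The repair is to argue at finitely many points only: choose~\( 0=s_0<\dots<s_k=1 \) with~\( \norm{p_{s_i}-p_{s_{i+1}}} \) small, pick a common~\( T \) large enough that each of the finitely many~\( \widetilde{q}_t^{(s_i)} \) is a tight quasi-projection and that~\( \norm{\Phi_t(p_{s_i}-p_{s_{i+1}})} \) is small for~\( t\geq T \) (possible because~\( \limsup_t\norm{\Phi_t(a)}\leq\norm*{a} \) applied to finitely many~\( a \)), and then connect consecutive~\( \widetilde{q}_T^{(s_i)} \) by straight lines via \Cref{lem:homotopy_quasi-idempotents}. Alternatively, invoke the standard (but nontrivial) lemma that an asymptotic morphism~\( A\to B \) extends to~\( \Cont([0,1],A)\to\Cont([0,1],B) \) and apply your single-projection argument to the projection~\( s\mapsto p_s \); that lemma is precisely what encodes the uniformity you assumed.
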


If we represent~\( \K_0 \)-groups by quasi-idempotents, then the preceding
proposition can be rephrased as: for each projection~\( p\in \Mat_n(A^+)
\), there exists~\( t_0>0 \), such that~\( [\Phi_t(p)] \) gives the same
class in~\( \K_0(B) \) for every~\( t>t_0 \), which is defined to be the
image of~\( [p] \) under the K-theory induced map of~\( (\Phi_t)_t \).      
We note in particular that, if~\( \Phi_t\colon A\to B \) and~\( \Psi_t\colon
A\to B \) are \emph{homotopic} asymptotic morphisms, then~\( \Phi_*\colon
\K_0(A)\to\K_0(B) \) coincides with~\( \Psi_*\colon \K_0(A)\to\K_0(B) \). 

\begin{proposition}
The asymptotic morphism
\begin{equation}\label{eq:Phi^D}
\Phi^D_t\colon \Co(\R)\otimes A\to\Cpt_B(E),\quad f\otimes a\mapsto
f(t^{-1}D)a
\end{equation}
induces a map~\( \K_0(\mathrm{S}A)\to\K_0(B) \),
which coincides with the
index pairing~\( \braket{[v],[D]} \) under the isomorphism~\(
\K_1(A)\simeq\K_0(\mathrm{S}A) \) in \Cref{lem:odd_K-theory}. 
Here~\( \mathrm{S}A \) refers to the
non-unital \Cst-algebra~\( \Co(\R)\otimes A \). 
\end{proposition}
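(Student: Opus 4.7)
The plan is to identify $\Phi^D_t$, up to asymptotic equivalence, with the Connes--Higson asymptotic morphism associated to the semi-split extension \eqref{eq:Kasparov_KK_to_extension} of $A$ by $\Cpt_B(E)$, and then invoke the Connes--Higson theorem which identifies the induced map on $\K_0$ with the boundary map $\partial$ of the extension. Since the index pairing \eqref{eq:K-theoretic_index_pairing} is defined as $\partial$ (followed, if desired, by the Morita map $[E]$ to reach $\K_0(B)$), the conclusion reduces to this identification together with the equality of the $\K_0$--picture of $\K_1(A)$ provided by the suspension isomorphism.

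First, I would fix a quasi-central approximate unit of $\Cpt_B(E)$ of the form $u_t \defeq g(t^{-1}D)$, where $g \colon \R \to [0,1]$ is continuous with $g(0)=0$ and $\lim_{\abs{x}\to\infty}g(x)=1$, for instance $g(x) = x^2/(1+x^2)$. That $u_t a \in \Cpt_B(E)$ for $a \in A$ follows from $a(D+\mathrm{i})^{-1}\in\Cpt_B(E)$, and the quasi-centrality $\norm{[u_t,a]}\to 0$ for $a \in \mathcal{A}$ follows from the bounded commutator condition on $\mathcal{A}$ together with a standard functional-calculus estimate (e.g.\ a Helffer--Sj\"ostrand formula applied to a smoothing of $g$). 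Feeding this approximate unit into the Connes--Higson construction yields an asymptotic morphism $\Psi_t \colon \mathrm{S}A \to \Cpt_B(E)$ with $\Psi_t(h \otimes a) \sim h(u_t)\,s(a)$ for $h \in \Co_0(0,1)$, where $s(a) = (a, P_1 a P_1)$ is the semi-split section of \eqref{eq:Kasparov_KK_to_extension}.

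Next, I would verify that $\Psi_t$ is equivalent to $\Phi^D_t$ after the natural reparameterization $h \mapsto h \circ g$. Using the quasi-centrality of $u_t$ to commute $h(u_t)$ past $a$ and the fact that the ``$\Cpt_B(E)$-part'' of $s(a)$ is absorbed, one computes
\[
    h(u_t)\,s(a) \;\sim\; h\bigl(g(t^{-1}D)\bigr)\,a \;=\; \Phi^D_t\bigl((h\circ g)\otimes a\bigr).
\]
The pullback $h \mapsto h \circ g$ is a \st-homomorphism $\Co_0(0,1) \to \Co(\R)$ implementing the standard identification of two models of the suspension, so it acts as the identity on $\K_0(\mathrm{S}A) \simeq \K_1(A)$. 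Consequently $(\Phi^D_t)_* = (\Psi_t)_*$ as maps $\K_0(\mathrm{S}A) \to \K_0(\Cpt_B(E))$.

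Finally, the Connes--Higson theorem asserts that $(\Psi_t)_*$ equals the boundary map $\partial$ of the extension \eqref{eq:Kasparov_KK_to_extension}, which under the suspension isomorphism is exactly the index pairing $\braket{[v],[D]}$. The main technical obstacle is the asymptotic identification step: verifying quantitatively that $\Psi_t$ and $\Phi^D_t$ agree on elementary tensors up to norm-vanishing errors, which in turn reduces to commutator estimates of the form $\norm{[f(t^{-1}D),a]} = O(t^{-1})$ for $a \in \mathcal{A}$ and $f$ in a suitable function class on $\R$.
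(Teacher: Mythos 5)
Your high-level plan --- identify $\Phi^D_t$, up to equivalence/homotopy, with the Connes--Higson asymptotic morphism and invoke \Cref{lem:Connes-Higson_extension_to_E} --- is exactly the strategy of the paper's proof in \Cref{app:asymptotic_morphism}. However, the execution has several genuine gaps.

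First, the proposed approximate unit is not one. With $g(x)=x^2/(1+x^2)$ you get $g(t^{-1}D)=1-(1+t^{-2}D^2)^{-1}$, which is $1$ minus a compact operator, so it does not lie in $\Cpt_B(E)$; moreover $g(t^{-1}D)\to 0$ \st-strongly as $t\to\infty$, not to~$1$. The correct family, as in \Cref{lem:reduction_of_au}, is $w_t=(1+t^{-2}D^2)^{-1}$, which lies in $\Cpt_B(E)$ and tends strictly to the identity. (In the non-unital case one actually needs the further correction $w_t^{1/2}v_t w_t^{1/2}$ with $v_t$ an approximate unit of $A$.)

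Second, the reparameterisation $h\mapsto h\circ g$ does not implement the suspension isomorphism. Since $g$ is even, $h\circ g$ lands in the proper \st-subalgebra of even functions inside $\Co(\R)$, so this map is far from a $\Cst$-isomorphism $\Co(0,1)\cong\Co(\R)$ and does not induce the identity on $\K_0(\mathrm{S}A)$. What the paper uses instead is the strictly increasing homeomorphism $P(x)=\tfrac{1}{2}(1+x(1+x^2)^{-1/2})$ from $\R$ onto $(0,1)$; pulling back along $P$ really is an isomorphism of $\Cst$-algebras.

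Third --- and this is the decisive gap --- the step $h(u_t)\,s(a)\sim h(g(t^{-1}D))\,a$ is not an asymptotic \emph{equivalence} that follows from quasi-centrality. Writing out $s(a)=(a,P_1 a P_1)$, the left-hand side in the ideal is $h(u_t)\,P_1 a P_1$, with $P_1=\tfrac{1}{2}(1+D(1+D^2)^{-1/2})$ a \emph{fixed} operator that does not rescale with $t$, while the right-hand side is a function of $t^{-1}D$ alone. Since $P_1 a P_1-a$ is not compact, the factor $P_1$ cannot simply be absorbed. The paper handles this via a genuine two-parameter \emph{asymptotic homotopy} (\Cref{lem:difference_construction,lem:error_reduction,prop:homotopy_asymptotic_morphism}), interpolating $(r w_t + 1-r)P_{(1-r)t+r}$ and $rP_{(1-r)t+r}$ in $r\in[0,1]$ and controlling the interpolating commutators with the estimates of \Cref{prop:commutator_sqrt_ctst,prop:commutator_Ds_Ft}. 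This is the technical core of the argument, and your proposal elides it by asserting an equivalence where only a homotopy holds.
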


\begin{proof}
We provide the proof in \Cref{app:asymptotic_morphism} based on the
following facts: given an unbounded Kasparov module 
\( (\mathcal{A},E,D) \), it gives an extension
of \Cst-algebras as in \eqref{eq:Kasparov_KK_to_extension} and hence an
asymptotic morphism~\( (\Phi^\mathrm{CH}_t) \)
via the Connes--Higson's construction
\[
\Phi^\mathrm{CH}_t\colon \Co(0,1)\otimes B\to I,\qquad f\otimes b\mapsto
f(u_t)s(b).
\]
Then~\( (\Phi^\mathrm{CH}_t) \) induces the K-theoretic index pairing
\[ 
    \partial\colon\K_0(\mathrm{S}A)\to\K_0(\Cpt_B(E))
\]
associated to the unbounded Kasparov module~\( (\mathcal{A},E,D) \) as in
\eqref{eq:K-theoretic_index_pairing}. It is invariant under
\emph{homotopies} of asymptotic morphisms. Thus, it suffices to show that 
\eqref{eq:Phi^D} is, up to identifying~\( \R \) with~\( (0,1) \) using a
homeomorphism,  
homotopic to the Connes--Higson asymptotic 
morphism~\( (\Phi^{\mathrm{CH}}_t)_t \). 
\end{proof}

\section{Index pairing via asymptotic morphisms}
\subsection{The odd K-theory class}
An odd unbounded Kasparov~\( A \)-\( B \)--module~\( (\mathcal{A},E,D) \)
gives rise to a semi-split extension \eqref{eq:Kasparov_KK_to_extension},
henceforce an asymptotic morphism~\( \Co(\R)\otimes A\to\Cpt_B(E) \). We
shall compute the image of the class~\( [v]\in\K_1(A) \) of a unitary~\(
v\in A \) under this asymptotic morphism. In order to do this,
we must first map the class~\(
[v]\in\K_1(A) \) of a unitary~\( v\in A \) to a class in~\(
\K_0(\mathrm{S}A) \) under the suspension isomorphism~\(
\K_1(A)\simeq\K_0(\mathrm{S}A) \).

\begin{lemma}\label{lem:odd_K-theory}
Let~\( v\in A \) be a unitary. Let~\( c,s\colon \R\to[0,1] \) be continuous
functions that satisfy
\[
    c^2(x)+s^2(x)=1,\quad \lim_{x\to\infty}s(x)=1,\quad
    \lim_{x\to-\infty}c(x)=1.
\]
Define~\( e,f\colon \R\to\Mat_2(A) \) as 
\begin{gather*} 
    e(x)\defeq
    \begin{pmatrix}
    s(x)^2\otimes1 & c(x)s(x)\otimes v \\
    c(x)s(x)\otimes v^* & c(x)^2\otimes 1
    \end{pmatrix};\quad
    f(x)\defeq
    \begin{pmatrix}
    0 & 0 \\ 0 & 1
    \end{pmatrix}.
\end{gather*}
Then both~\( e \) and~\( f \)  
are projections in~\( \Mat_2(\mathrm{S}A^+) \), and
\( [e]-[f]\in\K_0(\mathrm{S}A) \) is the image of~\(
[v]\in\K_1(A) \) under the suspension isomorphism~\(
\K_1(A)\xrightarrow{\sim}\K_0(\mathrm{S}A) \).  
\end{lemma}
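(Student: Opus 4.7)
The proof naturally splits into two parts: an algebraic verification that $e$ and $f$ are projections in $\Mat_2((\mathrm{S}A)^+)$, and a K-theoretic identification of $[e]-[f]$ as the image of $[v]$ under the suspension isomorphism.

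For the algebraic part, self-adjointness of $e$ is immediate since $c,s$ are real-valued and the two off-diagonal entries are adjoints of each other. To verify $e^{2}=e$, I expand the matrix product entry by entry; each diagonal entry collapses to $s^{2}$ or $c^{2}$ using $c^{2}+s^{2}=1$ and the unitarity $v^{*}v=vv^{*}=1$, and the same two identities make the off-diagonal entries collapse to $cs\otimes v$ or $cs\otimes v^{*}$. For membership in $\Mat_2((\mathrm{S}A)^+)$, the off-diagonal entries $cs\otimes v^{\pm 1}$ belong to $\mathrm{S}A=\Co(\R)\otimes A$ since $cs$ vanishes at $\pm\infty$, while the diagonal entries $s^{2}\otimes 1$ and $c^{2}\otimes 1$ belong to $(\mathrm{S}A)^+$ because they have scalar limits at $\pm\infty$. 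The element $f$ is a constant scalar matrix and is trivially a projection.

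For the K-theoretic identification, my plan is to realise $e(x)$ as the conjugate of a fixed rank-one projection by a $v$-dependent unitary. Setting
\[
W(x)\defeq \begin{pmatrix} s(x) & -c(x)\otimes v \\ c(x)\otimes v^{*} & s(x) \end{pmatrix},
\]
one checks using $c^{2}+s^{2}=1$ and the unitarity of $v$ that $W(x)\in\mathcal{U}_{2}(A^{+})$ and $e(x)=W(x)\operatorname{diag}(1,0)W(x)^{*}$. The continuous family $(W(x))_{x\in\R}$ is then a path of unitaries from $W(+\infty)=I_{2}$ to $W(-\infty)=\left(\begin{smallmatrix}0&-v\\v^{*}&0\end{smallmatrix}\right)$, which is exactly the Bott-type lift of $[v]\in\K_{1}(A)$ used in the standard construction of the suspension isomorphism. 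Reading off the conjugated path of projections and comparing with the constant $f$ then identifies $[e]-[f]$ with the image of $[v]$ under $\K_{1}(A)\xrightarrow{\sim}\K_{0}(\mathrm{S}A)$.

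The main obstacle is this last step. The suspension isomorphism admits several equivalent descriptions (loop of projections on the circle, mapping cone, clutching), and one must carefully match the formula for $e$ against a fixed reference. A useful preliminary step is to show that $[e]-[f]$ is independent of the choice of $c,s$: any two admissible pairs can be connected by an angle-valued homotopy $c_{t}=\cos\theta_{t}$, $s_{t}=\sin\theta_{t}$, with $\theta_{t}\colon \R\to[0,\pi/2]$ monotone and with the prescribed limits, along which $e$ remains a projection by the same computation as in the first part. This reduces the problem to a convenient normal form for which the comparison with a standard Bott formula (e.g.\ as in R{\o}rdam--Larsen--Laustsen) is transparent.
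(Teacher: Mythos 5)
Your algebraic verification that $e$ and $f$ are (formally) projections is fine, and the factorisation $e(x)=W(x)\operatorname{diag}(1,0)W(x)^*$ with
$W(x)=\left(\begin{smallmatrix}s(x)&-c(x)v\\ c(x)v^*&s(x)\end{smallmatrix}\right)$
unitary is a genuinely useful observation; the $(c,s)$-homotopy argument for independence of choices is also sound. One caveat: the reason you give for $s^2\otimes 1,\,c^2\otimes 1\in\mathrm{S}A^+$ — ``they have scalar limits at $\pm\infty$'' — is not actually sufficient. Membership in $(\Co(\R)\otimes A)^+$ requires the scalar limits at $+\infty$ and $-\infty$ to \emph{coincide}, and here $s^2\to 1$ at $+\infty$ but $s^2\to 0$ at $-\infty$. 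The paper's statement carries the same tacit identification (the correct reading is that $e$ and $f$ extend to projections over $[-\infty,+\infty]$ whose restrictions to $\{\pm\infty\}$ are Murray--von~Neumann equivalent scalar projections, so that $[e]-[f]$ defines a class in $\K_0(\mathrm{S}A)$), but your stated justification would also certify $s^2\in\Co(\R)^+$, which is false.

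The more significant issue is that the K-theoretic identification — which is the entire content of the lemma — is left as a plan rather than a proof. You note that $W$ is a path of unitaries from $I_2$ to $\left(\begin{smallmatrix}0&-v\\v^*&0\end{smallmatrix}\right)$ and assert that this is ``exactly the Bott-type lift of $[v]$'', with the final comparison to a ``standard Bott formula'' deferred. But the standard clutching loop runs from $1$ to $\operatorname{diag}(v,v^*)$, not to $\left(\begin{smallmatrix}0&-v\\v^*&0\end{smallmatrix}\right)$; reconciling the two requires tracking the rotation $\left(\begin{smallmatrix}0&-1\\1&0\end{smallmatrix}\right)$ that relates them, the flip unitary that exchanges $e$ with the projection actually produced by the index map, and the orientation of the suspension--cone extension — precisely the sign ambiguity the paper flags in the remark following the lemma. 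The paper settles all of this in one computation by invoking the partial-isometry index-map formula (R\o{}rdam--Larsen--Laustsen, Prop.~9.2.2): writing down the explicit lift $z(x)=\left(\begin{smallmatrix}s(x)\otimes v&-c(x)\otimes 1\\0&0\end{smallmatrix}\right)$ of $v\oplus 0$ under $\ev_{+\infty}$, evaluating $\partial[v]=[\mathbf{1}-z^*z]-[\mathbf{1}-zz^*]$, and observing that $\mathbf{1}-zz^*=f$ while $\mathbf{1}-z^*z$ is carried to $e$ by the constant flip. Your $W$-picture is closely related (one has $z=\operatorname{diag}(1,0)W'$ for a constant-unitary twist $W'$ of your $W$), and your suggested normal-form reduction would indeed ultimately land on the same R\o{}rdam--Larsen--Laustsen reference, but as written the decisive computation is absent.
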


\begin{proof}
Let
\[ \begin{tikzcd}
\mathrm{S}A\arrow[r, tail] & \mathrm{C}A \arrow[r, two heads,
"\ev_{+\infty}"] & A
\end{tikzcd} \]
be the suspension--cone extension, where~\( \mathrm{C}A\defeq
\Co((-\infty,+\infty],A) \) and~\( \ev_{+\infty}\colon \mathrm{C}A\to A \) is the evalutation map 
at~\( +\infty \).
The boundary map associated to this extension
\( \partial\colon\K_1(A)\to\K_0(\mathrm{S}A) \) is an isomorphism because
\( \mathrm{C}A \) is contractible. Up to a sign, this uniquely determines
the suspension isomorphism~\( \K_1(A)\simeq\K_0(\mathrm{S}A) \). 

We use the partial isometry picture to compute the suspension
isomorphism as in
\cite{Roerdam-Larsen-Laustsen:K-theory}*{Proposition 9.2.2}. Let~\(
z\colon\R\to\Mat_2(A) \) be defined as
\[ 
    z\defeq\begin{pmatrix}
    s(x)\otimes v & -c(x)\otimes 1
    \\
    0 & 0
    \end{pmatrix}.  
\]
Then~\( z\in\Mat_2(\mathrm{C}A^+) \) is a partial isometry, which lifts~\(
v\oplus 0 \) under~\( \ev_{+\infty} \). Thus we have
\[ 
    \partial[v]=[\mathbf{1}_{2\times 2}-z^*z]-[\mathbf{1}_{2\times
    2}-zz^*].
\]
We compute that
\begin{align*} 
\mathbf{1}_{2\times 2}-z^*z&=
\begin{pmatrix}
1 & 0 \\ 0 & 1
\end{pmatrix}-
\begin{pmatrix}
s^2(x)\otimes 1 &
-c(x)s(x)\otimes v^* \\
-c(x)s(x)\otimes v &
c^2(x)\otimes 1 
\end{pmatrix}\\
&=\begin{pmatrix}
c^2(x)\otimes 1 &
c(x)s(x)\otimes v^* \\
c(x)s(x)\otimes v &
s^2(x)\otimes 1 
\end{pmatrix}; \\
\mathbf{1}_{2\times 2}-zz^*&=
\begin{pmatrix}
1 & 0 \\ 0 & 1
\end{pmatrix}-\begin{pmatrix}
1 & 0 \\ 0 & 0
\end{pmatrix}
=\begin{pmatrix}
0 & 0 \\ 0 & 1
\end{pmatrix}=f.
\end{align*}
The claim follows as
\[ 
    e=\begin{pmatrix}
    0 & 1 \\ 1 & 0
    \end{pmatrix}\begin{pmatrix}
    c^2(x)\otimes 1 & c(x)s(x)\otimes v^* \\
    c(x)s(x)\otimes v & s^2(x)\otimes 1
    \end{pmatrix}
    \begin{pmatrix}
    0 & 1 \\ 1 & 0
    \end{pmatrix}.\qedhere
\]
\end{proof}
\begin{remark}
We note that the natural isomorphism 
\( \K_1(A)\simeq\K_0(\mathrm{S}A) \) is unique only up to a sign, as shown
by Elliott \cite{Elliott:Uniqueness_index_map_complex}. For
instance, if one replaces~\( \mathrm{C}A=\Co((-\infty,+\infty],A) \) by~\(
\Co([-\infty,+\infty),A) \) and~\( \ev_{+\infty} \) by~\( \ev_{-\infty} \),
then the same index map formula as in
\cite{Roerdam-Larsen-Laustsen:K-theory} yields
the inverse of the class~\( [e]-[f]\in\K_0(\mathrm{S}A) \) 
constructed above. The convention that we have adopted gives the correct
index pairing, 
cf.~\Cref{thm:index_pairing_quasi-projections,thm:index_pairing_submodule}.
\end{remark}

\subsection{Representing the odd index pairing by quasi-projections}
We apply the asymptotic morphism \eqref{eq:Phi^D} to the K-theory class
\( [e]-[f]\in\K_0(\mathrm{S}A) \). Set~\(c_t\defeq c(t^{-1}D) \) and 
\( s_t\defeq s(t^{-1}D) \);
define~\( \check{e}_t\defeq \Phi^D_t(e) \) and 
\( f_t\defeq \Phi^D_t(f) \). Then  
\[ 
\check{e}_t=\begin{pmatrix}
    s_t^2 & c_ts_tv \\
    c_ts_tv^* & c^2_t
    \end{pmatrix},\quad
f_t=\begin{pmatrix}
0 & 0 \\ 0 & 1
\end{pmatrix}.
\] 

Thus~\( (f_t)_{t\in [1,\infty)} \) 
is a constant family of projection.
The family of operators~\( (\check{e}_t)_{t\in [1,\infty)} \) satisfies
\[ 
    \check{e}_t^*-\check{e}_t\sim 0,\quad \check{e}_t^2-\check{e}_t\sim 0
\]
and hence ``asymptotically'' a projection. For sufficiently large~\( t \),
\( \check{e}_t \) is a quasi-idempotent, yet not self-adjoint. The
asymptotic self-adjointness, nevertheless, allows us to replace~\(
(\check{e}_t)_{t\in [1,\infty)} \) by a family~\( (e_t)_{t\in [1,\infty)} \)
of \emph{quasi-projections}, up to homotopies of
quasi-idempotents. To obtain a quantitative estimate as in
the proposition below, we need extra assumptions for the commutators
\[ 
    [v,c_t],\quad [v,s_t],\quad [v,\sqrt{c_ts_t}].
\]

\begin{proposition}
Let
\begin{equation}\label{eq:e_t}
    e_t\defeq\begin{pmatrix}
     s_t^2 & \sqrt{c_ts_t} v\sqrt{c_ts_t} \\
    \sqrt{c_ts_t} v^*\sqrt{c_ts_t} &  c_t^2
    \end{pmatrix},
\end{equation}
Assume that there exists a constant~\( R>0 \),
independent of~\( t \),  such that
\begin{equation}\label{eq:assmption_constants_R}
\left.\begin{array}{r}
\norm*{[c_t,v]} \\
\norm*{[s_t,v]} \\
\norm*{[\sqrt{c_ts_t},v]}
\end{array}\right\}\leq R\cdot t^{-1}\norm*{[D,v]}
\end{equation}
Let
\[ 
0<\varepsilon<\frac{8}{237}(\sqrt{1393}-34)\approx 0.1122. 
\]
Then for every~\( t>2\varepsilon^{-1}R\norm*{[D,v]} \), the following
holds\textup{:}
\begin{enumerate}
\item \( e_t \) is an~\( \varepsilon \)-quasi-projection.
\item \( \norm*{e_t-\check{e}_t}<\frac{\sqrt{2}}{4}\varepsilon \). 
\item \( e_t \) is homotopic to~\( \check{e}_t \) via a
path of quasi-idempotents.
\end{enumerate}
\end{proposition}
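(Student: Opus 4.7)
The plan has three components, one per clause. Throughout, I exploit that $c_t$ and $s_t$ are Borel functions of the self-adjoint operator $D$, hence commute with each other and with $\sqrt{c_t s_t}$.

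For clause~(1), self-adjointness of $e_t$ is manifest, so only the quasi-idempotent bound $\norm{e_t^2-e_t}<\varepsilon$ must be established. Setting $X\defeq\sqrt{c_t s_t}\,v\,\sqrt{c_t s_t}$, I compute $e_t^2-e_t$ block-by-block. Using $c_t^2+s_t^2=1$, the diagonal $(1,1)$-entry simplifies to
\[
(e_t^2-e_t)_{11}=-s_t^2c_t^2+XX^*=\sqrt{c_ts_t}\bigl(vc_ts_tv^*-c_ts_t\bigr)\sqrt{c_ts_t}=\sqrt{c_ts_t}\,[v,c_ts_t]\,v^*\,\sqrt{c_ts_t},
\]
and the $(2,2)$-entry is analogous. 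The off-diagonal becomes
\[
(e_t^2-e_t)_{12}=s_t^2X+Xc_t^2-X=[s_t^2,X]=\sqrt{c_ts_t}\,[s_t^2,v]\,\sqrt{c_ts_t},
\]
where I used $c_t^2-1=-s_t^2$ and that $s_t^2$ commutes with $\sqrt{c_ts_t}$. The Leibniz identities $[v,c_ts_t]=[v,c_t]s_t+c_t[v,s_t]$ and $[s_t^2,v]=s_t[s_t,v]+[s_t,v]s_t$, combined with the hypothesis~\eqref{eq:assmption_constants_R}, bound each of these inner commutators by $2Rt^{-1}\norm{[D,v]}$. The AM--GM bound $\norm{c_ts_t}\leq\tfrac12$ (from $c^2+s^2=1$) yields $\norm{\sqrt{c_ts_t}}^2\leq\tfrac12$, whence every block of $e_t^2-e_t$ has norm at most $Rt^{-1}\norm{[D,v]}$. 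A standard $2\times 2$ block-matrix norm estimate together with the threshold $t>2\varepsilon^{-1}R\norm{[D,v]}$ then produces $\norm{e_t^2-e_t}<\varepsilon$.

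For clause~(2), the diagonal entries of $e_t$ and $\check e_t$ agree, and the off-diagonal difference equals $\sqrt{c_ts_t}\,v\,\sqrt{c_ts_t}-c_ts_tv=\sqrt{c_ts_t}\,[v,\sqrt{c_ts_t}]$, of norm at most $\norm{\sqrt{c_ts_t}}\cdot Rt^{-1}\norm{[D,v]}\leq\tfrac{1}{\sqrt{2}}\cdot\tfrac{\varepsilon}{2}=\tfrac{\sqrt{2}}{4}\varepsilon$, using the $t$-bound once more.

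For clause~(3), I apply \Cref{thm:close_quasi-idempotents_homotopic}(1) to the pair $(e_t,\check e_t)$, taking the quasi-idempotent constant from~(1) and $\delta=\tfrac{\sqrt{2}}{4}\varepsilon$ from~(2). The condition $(2\norm{e_t}+\tfrac{5}{4}\delta+1)\delta+\varepsilon<\tfrac14$ reduces, after inserting a crude a~priori estimate of $\norm{e_t}$ read off from \eqref{eq:e_t}, to a quadratic inequality in $\varepsilon$ whose positive root is the displayed threshold $\tfrac{8}{237}(\sqrt{1393}-34)$. The conceptual content of the proposition is light: the decisive feature of the symmetrised definition \eqref{eq:e_t} is that the factors $\sqrt{c_ts_t}$ appear on both sides of every computation, so that the algebraic cancellations reduce each expression to a pure commutator controlled by \eqref{eq:assmption_constants_R}. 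The main obstacle is purely quantitative bookkeeping: to match the threshold $t>2\varepsilon^{-1}R\norm{[D,v]}$ one must spend at most $\varepsilon/2$ per block in the block-norm estimate, and the precise numerical constant in~(3) requires solving the resulting quadratic; no new conceptual ingredient is needed beyond \Cref{thm:close_quasi-idempotents_homotopic}.
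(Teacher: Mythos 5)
Your proof is correct and follows essentially the same route as the paper's: the same block-by-block expansion of $e_t^2-e_t$ (your $\sqrt{c_ts_t}\,[s_t^2,v]\,\sqrt{c_ts_t}$ is the same quantity the paper writes as $[\sqrt{c_ts_t}v\sqrt{c_ts_t},c_t^2]$), the same $\norm{\sqrt{c_ts_t}}\leq\frac{\sqrt2}{2}$ contraction, the same commutator identity $\sqrt{c_ts_t}[v,\sqrt{c_ts_t}]$ for the difference $e_t-\check e_t$, and the same invocation of \Cref{thm:close_quasi-idempotents_homotopic}(1) leading to the quadratic $\frac{237}{256}\varepsilon^2+\frac{17}{8}\varepsilon<\frac14$ whose positive root is $\frac{8}{237}(\sqrt{1393}-34)$. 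No gaps.
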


\begin{proof}
To prove (1), we compute that
\begin{align*} 
e_t^2&=\begin{pmatrix}
s_t^2 & \sqrt{c_ts_t} v\sqrt{c_ts_t} \\
\sqrt{c_ts_t} v^*\sqrt{c_ts_t} &  c_t^2
\end{pmatrix}\begin{pmatrix}
s_t^2 & \sqrt{c_ts_t} v\sqrt{c_ts_t} \\
\sqrt{c_ts_t} v^*\sqrt{c_ts_t} &  c_t^2
\end{pmatrix} \\
&=\begin{pmatrix}
s_t^2+\sqrt{c_ts_t}[v,c_ts_t]v^*\sqrt{c_ts_t} & 
\sqrt{c_ts_t}v\sqrt{c_ts_t}+[\sqrt{c_ts_t}v\sqrt{c_ts_t},c_t^2] \\
\sqrt{c_ts_t}v^*\sqrt{c_ts_t}+[c_t^2,\sqrt{c_ts_t}v^*\sqrt{c_ts_t}] &
c_t^2+\sqrt{c_ts_t}v^*[c_ts_t,v]\sqrt{c_ts_t}
\end{pmatrix}.
\end{align*}
So
\begin{align*}
e_t^2-e_t=&\begin{pmatrix}
\sqrt{c_ts_t}[v,c_ts_t]v^*\sqrt{c_ts_t} & 0 \\ 0 &
\sqrt{c_ts_t}v^*[c_ts_t,v]\sqrt{c_ts_t}
\end{pmatrix}\\
&+\begin{pmatrix}
0 & [\sqrt{c_ts_t}v\sqrt{c_ts_t},c_t^2] \\
[c_t^2,\sqrt{c_ts_t}v^*\sqrt{c_ts_t}] & 0
\end{pmatrix}.
\end{align*}
Therefore,
\begin{align*}
\norm{e_t^2-e_t}\leq&
\max\{\norm*{\sqrt{c_ts_t}[v,c_ts_t]v^*\sqrt{c_ts_t}},\norm*{\sqrt{c_ts_t}v^*[c_ts_t,v]\sqrt{c_ts_t}}\}\\
&+\norm*{[\sqrt{c_ts_t}v\sqrt{c_ts_t},c_t^2]}
\end{align*}
Now we note that~\( v^*,v,\sqrt{c_t},\sqrt{s_t} \) are all contractive
operators, and
\[
    0\leq c_ts_t\leq\frac{1}{2}\left(c_t^2+s_t^2\right)=\frac{1}{2}\cdot\id.
\]
So~\( \sqrt{c_ts_t} \) has norm bounded by~\( \frac{\sqrt{2}}{2} \). 
By assumption \eqref{eq:assmption_constants_R}, we obtain
\begin{alignat*}{3}
\norm*{\sqrt{c_ts_t}[v,c_ts_t]v^*\sqrt{c_ts_t}}&\leq
(\tfrac{\sqrt{2}}{2})^2\norm*{[v,c_t]}+(\tfrac{\sqrt{2}}{2})^2\norm*{[v,s_t]}&\leq
R\cdot t^{-1}\norm*{[D,v^*]};\\
\norm*{\sqrt{c_ts_t}v^*[c_ts_t,v]\sqrt{c_ts_t}}&\leq
(\tfrac{\sqrt{2}}{2})^2\norm*{[c_t,v]}+(\tfrac{\sqrt{2}}{2})^2\norm*{[s_t,v]}&\leq
R\cdot t^{-1}\norm*{[D,v^*]};\\
\norm*{[\sqrt{c_ts_t}v\sqrt{c_ts_t},c_t^2]}&\leq
(\tfrac{\sqrt{2}}{2})^2\norm*{[v,c_t]}+(\tfrac{\sqrt{2}}{2})^2\norm*{[v,c_t]}&
\leq R\cdot t^{-1}\norm*{[D,v^*]}.
\end{alignat*}
Therefore,~\( \norm*{e_t^2-e_t}\leq 2R\cdot t^{-1}\norm*{[D,v^*]}<\varepsilon \). 
Clearly~\( e_t
\) is self-adjoint. This finishes the proof of (1).

Now we prove (2).
Use again the assumption \eqref{eq:assmption_constants_R}, we have
\begin{align*} 
\norm*{e_t-\check{e}_t}&=\norm*{\begin{pmatrix}
    0 & \sqrt{c_ts_t}[v,\sqrt{c_ts_t}] \\
    \sqrt{c_ts_t}[v^*,\sqrt{c_ts_t}] & 0
\end{pmatrix}}\\
&\leq\norm*{\sqrt{c_ts_t}}\cdot\norm*{[v,\sqrt{c_ts_t}]}\leq
\frac{\sqrt{2}}{2}Rt^{-1}\norm*{[D,v]}<\frac{\sqrt{2}}{4}\varepsilon.
\end{align*}

Now we prove (3). Note
that~\( e_t \) is an~\( \varepsilon \)-quasi-projection, so~\(
\norm*{e_t}<1+\varepsilon \); and~\(
\delta\defeq\norm*{\check{e}_t-e_t}<\frac{\sqrt{2}}{4}\varepsilon \). 
It follows that (below we take~\( \sqrt{2}<1.5 \)):
\begin{align*} 
\left(2\norm*{e_t}+\frac{5}{4}\delta+1\right)\delta+\varepsilon&<
\left(2(1+\varepsilon)+\frac{5}{4}\cdot\frac{\sqrt{2}}{4}\varepsilon+1\right)\frac{\sqrt{2}}{4}\varepsilon+\varepsilon\\
&<\left(\frac{79}{32}\varepsilon+3\right)\cdot\frac{3}{8}\varepsilon+\varepsilon \\
&=\frac{237}{256}\varepsilon^2+\frac{17}{8}\varepsilon<\frac{1}{4}.
\end{align*}
Therefore, by \Cref{thm:close_quasi-idempotents_homotopic},~\( e_t \) and
\( \check{e}_t \) are homotopic via a path of quasi-idempotents. 
\end{proof}

Then \Cref{prop:K-theory_quasi-idempotents} leads to the following
result:

\begin{theorem}\label{thm:index_pairing_quasi-projections}
Let~\( \varepsilon<0.1121 \). Assume that there exists~\( R>0 \) such that
\eqref{eq:assmption_constants_R} holds.
Then the 
\K-theoretic index pairing~\( \braket{[v],[D]} \) is represented by
the formal difference of \emph{(quasi-)projections}
\[ 
  [e_t]-[f_t]=
  \left[\begin{pmatrix}
   s_t^2 & \sqrt{c_ts_t} v\sqrt{c_ts_t} \\
  \sqrt{c_ts_t} v^*\sqrt{c_ts_t} &  c_t^2
  \end{pmatrix}\right]-
  \left[\begin{pmatrix}
  0 & 0 \\ 0 & 1
  \end{pmatrix}\right]
\]
for any choice of~\( t\in [1,\infty) \) satisfying 
\( t>2\varepsilon^{-1}R\norm*{[D,v]} \). 
\end{theorem}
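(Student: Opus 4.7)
The plan is to assemble the pieces built up in the preceding sections. First, I would invoke the proposition at the end of Section 2.3 which identifies the map $(\Phi_t^D)_*\colon \K_0(\mathrm{S}A)\to \K_0(\Cpt_B(E))$ with the boundary map $\partial\colon \K_1(A)\to \K_0(\Cpt_B(E))$ of the K-theoretic index pairing, under the suspension isomorphism $\K_1(A)\simeq \K_0(\mathrm{S}A)$. Combining this with \Cref{lem:odd_K-theory}, which realises the image of $[v]$ in $\K_0(\mathrm{S}A)$ as $[e]-[f]$, reduces the theorem to verifying the equality $(\Phi_t^D)_*([e]-[f]) = [e_t] - [f_t]$ in $\K_0(\Cpt_B(E))$.

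Next I would compute $(\Phi_t^D)_*([e]-[f])$ by evaluating the entrywise extension of the asymptotic morphism on $e$ and $f$. By construction $\Phi_t^D(e) = \check{e}_t$ and $\Phi_t^D(f) = f_t$. The element $f_t$ is already a projection, constant in $t$, so it serves as its own K-theory representative. The element $\check{e}_t$ is, for $t$ sufficiently large, a quasi-idempotent but not self-adjoint; to extract a class in $\K_0(\Cpt_B(E))$ from it I would apply the quasi-idempotent picture of K-theory (\Cref{prop:K-theory_quasi-idempotents}), according to which any nearby quasi-idempotent represents the same K-theory class. The natural choice is $e_t$ as defined in \eqref{eq:e_t}, which has the additional feature of being self-adjoint --- a property that will be essential for the contact with the spectral localiser made later on.

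The crux is then the preceding proposition. Under the quantitative bound $t > 2\varepsilon^{-1}R\norm*{[D,v]}$, it provides both (i) that $e_t$ is an $\varepsilon$-quasi-projection in $\Mat_2(\Cpt_B(E)^+)$, and (ii) that $e_t$ and $\check{e}_t$ are homotopic through a path of quasi-idempotents. A second application of \Cref{prop:K-theory_quasi-idempotents} then gives $[e_t] = [\check{e}_t]$ in $\K_0(\Cpt_B(E))$, whence
\[ \braket{[v],[D]} \;=\; (\Phi_t^D)_*([e]-[f]) \;=\; [\check{e}_t] - [f_t] \;=\; [e_t] - [f_t]. \]
No step is particularly obstructive: the genuine technical work (the quantitative construction of $e_t$, the estimates on $\norm{e_t^2 - e_t}$ and $\norm{e_t - \check{e}_t}$, and the existence of the homotopy to $\check{e}_t$) has already been carried out in the preceding proposition, so what remains is bookkeeping between the asymptotic-morphism and quasi-idempotent pictures of K-theory.
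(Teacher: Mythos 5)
Your proposal is correct and follows the paper's own (largely implicit) argument: the paper states the theorem immediately after the quantitative proposition on $e_t$ and $\check{e}_t$, with only the remark that it ``follows from'' \Cref{prop:K-theory_quasi-idempotents}, leaving exactly the bookkeeping you spell out. You correctly chain the proposition identifying $(\Phi_t^D)_*$ with the K-theoretic index pairing (proved in \Cref{app:asymptotic_morphism}), \Cref{lem:odd_K-theory} supplying $[e]-[f]$, and the homotopy $e_t\simeq\check{e}_t$ from the preceding proposition under the hypothesis $t>2\varepsilon^{-1}R\norm{[D,v]}$, then invoke \Cref{prop:K-theory_quasi-idempotents} to conclude $[\check e_t]=[e_t]$.
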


\begin{remark}
It is possible to improve the upper bound~\( 0.1121 \) for 
\( \varepsilon \) with a more accurate estimate of the
norm~\( \norm*{e_t-\check{e}_t} \), together with a finer assumption that
treats the constant~\( R \) for~\( c_t \),~\( s_t \) and~\( \sqrt{c_ts_t}
\) separately. However, we note that this is of little help for
constructing finite-dimensional K-theory representatives in the next
section, as the control of~\( \varepsilon \) will be dominated by
the estimate~\( \varepsilon+\delta<\frac{1}{400}=0.0025 \) in
\Cref{thm:index_pairing_submodule}.
\end{remark}

\section{Spectral truncation of the index pairing}
We have constructed from the previous section, a pair of
\emph{quasi-projections}~\( e_t \) and~\( f_t \), such that the K-theoretic
index pairing~\( \braket{[v],[D]} \) is given by
\[ 
    [e_t]-[f_t]\in\K_0(\Cpt_B(E)).
\]
The next goal is to apply a 
\emph{spectral truncation} to them. Namely, we shall
replace~\( e_t \) and~\( f_t \) by quasi-projections~\( p^e_{t,\lambda} \)
and~\( p^f_{t,\lambda} \) on a
submodule of~\( E\oplus E \), which 
corresponds to the ``low-lying spectrum'' of~\( D \). The submodule depends
on a sufficiently large choice of the parameter 
\( \lambda>0 \), which occurs 
as the threshold of this truncated spectrum of~\(D\). 
The existence of such a submodule is, however, not always assured. It
always exists if~\( B=\C \) and~\( E \) is a Hilbert space.  

We shall show that the quasi-projections~\( e_t \) and~\(
f_t\) are homotopic to direct sums of quasi-projections,
\[ 
    p^e_{t,\lambda}\oplus q^e_{t,\lambda},\quad p^f_{t,\lambda}\oplus
    q^f_{t,\lambda},
\]
also~\( q^e_{t,\lambda} \) is homotopic to~\( q^f_{t,\lambda} \). All these
homotopies are through quasi-projections in~\( \Mat_2(\Cpt_B(E)^+)
\). Therefore, it follows from the description of~\( \K_0(\Cpt_B(E)) \) by
quasi-idempotents or quasi-projections
(\Cref{prop:K-theory_quasi-idempotents}) that
\[ 
    [e_t]-[f_t]=[p^e_{t,\lambda}]+[q^e_{t,\lambda}]-[p^f_{t,\lambda}]-[q^f_{t,\lambda}]=[p^e_{t,\lambda}]-[p^f_{t,\lambda}].
\]

In this section, we shall prove that 
the K-theory representative~\( [p^e_{t,\lambda}]-[p^f_{t,\lambda}] \)
gives rise to 
the odd spectral localiser~\( L_{\kappa,\lambda} \) 
of Loring and Schulz-Baldes when~\( B=\C \). 
More precisely, we will show that the
spectral localiser~\( L_{\kappa,\lambda} \) can be recovered from a particular
choice of the functions~\( s \) and~\( c \), and that  
\[ 
    [e_t]-[f_t]=[p^e_{t,\lambda}]-[p^f_{t,\lambda}]=
    \frac{1}{2}\sig(L_{t^{-1},\lambda}).
\]
Thus our result gives a new proof of the main theorem in
\cites{Loring-SBaldes:Odd_spectral_localiser,Loring-SBaldes:Odd_spectral_localiser_sf},
as well as provides a new interpretation of the spectral localiser.

We also consider a special case of the \emph{numerical index pairing},
given by pairing the Kasparov product~\( [v]\times[D] \) with a finite
trace~\( \tau\colon B\to\C \). We show that the spectral truncation method
still applies, providing a representative of the numerical index pairing
that is supported on a spectral subspace of the localised Hilbert space~\(
\mathcal{H}^\tau\defeq E\otimes_BL^2(E,\tau) \). This recovers the
semi-finite spectral localiser of Schulz-Baldes and Stoiber
\cite{SBaldes-Stoiber:Semifinite_spectral_localizer}, and gives a new proof
of their main theorem in this special setting.

\subsection{Spectral decomposition of Hilbert \Cst-modules}
We introduce the following definition for studying spectral truncations of
index pairings.

\begin{definition}\label{def:spectral_decomposition}
Let~\( B \) be a \Cst-algebra,~\( \mathcal{E} \) be a Hilbert~\( B
\)-module, and~\( \mathcal{D} \) be an unbounded, self-adjoint regular operator
on~\( \mathcal{E} \). Given~\( \lambda>0 \),
A \emph{spectral decomposition} of~\( \mathcal{E} \)
for the operator~\( \mathcal{D} \) with \emph{spectral threshold}~\( \lambda \) 
is a pair of mutually \emph{complemented} submodules
\( (\mathcal{E}_{\lambda}^{\downarrow},\mathcal{E}_{\lambda}^{\uparrow}) \) 
of~\( \mathcal{E} \), such that the operator inequality
\[ 
    \braket{\mathcal{D}\xi,\mathcal{D}\xi}\geq\lambda^2\braket{\xi,\xi}
\]
holds for all~\( \xi\in\dom \mathcal{D}\cap \mathcal{E}^\uparrow_\lambda \). 
We call~\( \mathcal{E}_\lambda^{\downarrow} \) 
(resp.~\( \mathcal{E}_\lambda^{\uparrow} \)) the
\emph{lower} (resp.~\emph{upper}) \emph{spectral} submodule for the
decomposition~\( \mathcal{E}\simeq \mathcal{E}_\lambda^{\downarrow}\oplus
\mathcal{E}_\lambda^{\uparrow} \).
\end{definition}

Let~\((\mathcal{E}_{\lambda}^\downarrow,\mathcal{E}_{\lambda}^\uparrow) \)
be a spectral decomposition of~\( \mathcal{E} \). Write~\(
P_{\lambda}^{\downarrow} \) and~\( P_{\lambda}^{\uparrow} \) for the
orthogonal projections onto ~\( \mathcal{E}_{\lambda}^{\downarrow} \) and
\( \mathcal{E}_{\lambda}^{\uparrow} \).  Then under the direct sum
decomposition
\[ 
    \mathcal{E}\simeq \mathcal{E}_{\lambda}^{\downarrow}\oplus \mathcal{E}_{\lambda}^{\uparrow},
\]
any operator~\( T \) on~\( \mathcal{E} \) decomposes as a~\( 2\times 2
\)-block operator: 
\begin{equation}\label{eq:spectral_decomposition_operator}
    \begin{pmatrix}
    p^T & n^T{} \\
    m^T & q^T
    \end{pmatrix},    
\end{equation}
where~\( p^T \),~\( q^T \),~\( m^T \) and~\( n^T \)  
are \emph{truncated} operators,
defined as    
\begin{alignat*}{3} 
p^T &\defeq P_{\lambda}^{\downarrow} TP_{\lambda}^{\downarrow} &&\colon \mathcal{E}_{\lambda}^{\downarrow}\to \mathcal{E}_{\lambda}^{\downarrow}, \\ q^T
&\defeq P_{\lambda}^{\uparrow}TP_{\lambda}^{\uparrow} &&\colon
\mathcal{E}_{\lambda}^{\uparrow}\to \mathcal{E}_{\lambda}^{\uparrow},\\ m^T &\defeq
P_{\lambda}^{\uparrow}TP_{\lambda}^{\downarrow}&&\colon \mathcal{E}_{\lambda}^{\downarrow}\to
\mathcal{E}_{\lambda}^{\uparrow}, \\ n^T &\defeq
P_{\lambda}^{\downarrow} TP_{\lambda}^{\uparrow}&&\colon \mathcal{E}_{\lambda}^{\uparrow}\to
\mathcal{E}_{\lambda}^{\downarrow}.
\end{alignat*}
If~\( T \) is self-adjoint, then clearly 
both~\( p^T \) and~\( q^T \) are self-adjoint, and~\( n^T=m^T{}^* \).

\begin{example}[Spectral decomposition of a Hilbert
space]\label{ex:spectral_decomposition_Hilbert_space}
We explain the terminology in the Hilbert space setting, i.e.~\( B=\C \)
and~\( \mathcal{E} \) is given by a Hilbert space~\( \mathcal{H} \).  Let~\(
\mathcal{D} \) be an unbounded, self-adjoint operator on~\( \mathcal{H} \).
Let~\( I\subseteq \R \) be a Borel subset and~\( \chi_I \) be its
characteristic function. Then Borel functional calculus gives a spectral
projection~\( \chi_I(\mathcal{D})\in\Bdd(\mathcal{H}) \). If~\( \mathcal{D}
\) has discrete spectrum, then~\( \mathcal{H}_I\defeq
\chi_I(\mathcal{D})\mathcal{H} \) is spanned by eigenvectors~\( \xi \)
of~\( \mathcal{D} \) with eigenvalue in~\( I \).  In particular, let~\( I
\) be any Borel subset of~\( (-\infty,-\lambda)\cup(\infty,\lambda) \),
then~\( (\mathcal{H}_{\R\setminus I},\mathcal{H}_I) \) is a spectral
decomposition of~\( \mathcal{H} \) for the operator~\( \mathcal{D} \) with
spectral threshold~\( \lambda \).
\end{example}

\begin{remark}
We remark that the 
existence of a spectral decomposition of a Hilbert \Cst-module
can not be guaranteed in general. 
Unlike the Hilbert space case, there is no Borel functional calculus
for operators on Hilbert \Cst-modules due to a lack of spectral measure. 
This prevents one from constructing spectral decompositions 
from characteristic
functions as in \Cref{ex:spectral_decomposition_Hilbert_space}.
We anticipate that, in order to obtain a more broadly applicable notion of
spectral decomposition of Hilbert \Cst-modules, 
the \emph{mutual orthogonality} of the submodules~\(
\mathcal{E}_\lambda^\downarrow \) and~\( \mathcal{E}_\lambda^\uparrow \)
needs be relaxed. 
\end{remark}

Later, we shall be concerned with spectral decomposition of the Hilbert
\( B \)-module~\( \widehat{E}\defeq E\oplus E \).   
We caution that there are be two ways to 
decomposition~\( \widehat{E} \) as mutually complemented submodules, 
one as~\( E\oplus E \) and another being the spectral decomposition~\(
E_{\lambda}^{\downarrow}\oplus E_{\lambda}^{\uparrow} \). Both
decompositions yield a~\( 2\times2 \)-matrix expression of an operator~\(
T \) on~\( \widehat{E} \). The above notation~\( p^T,q^T \) and~\(
m^T \) of \eqref{eq:spectral_decomposition_operator} 
will be used only for the spectral decomposition~\(
\widehat{E}\simeq E_{\lambda}^{\downarrow}\oplus
E_{\lambda}^{\uparrow} \). 

\subsection{Truncating quasi-projections to spectral submodules}
Let~\( (E_{\lambda}^\downarrow,E_{\lambda}^\uparrow) \) be a spectral
decomposition of~\( \widehat{E}\defeq E\oplus E \) for the operator~\(
D\oplus D \) with spectral threshold~\( \lambda \).  
In order to apply truncations to the quasi-projections~\( e_t \) and~\( f_t
\) to~\( E_{\lambda}^\downarrow \) 
without changing their representing K-theory classes, one first
homotopes them to a diagonal operator under the direct sum decomposition of
Hilbert modules~\( \widehat{E}=E_{\lambda}^{\downarrow}\oplus
E_{\lambda}^{\uparrow} \), based
on the following lemma. 

\begin{lemma}[Reduction to the diagonal]\label{lem:reduction_to_diagonal}
Let~\( \mathcal{E} \) be a Hilbert~\( B \)-module, and~\(
\mathcal{F}\subseteq\mathcal{E} \) be a
complemented submodule. 
Let~\( e\in\End^*_B(\mathcal{E}) \) be an~\( \varepsilon \)-quasi-projection. 
Under the direct sum decomposition of Hilbert~\( B \)-module~\(
\mathcal{E}\simeq\mathcal{F}\oplus\mathcal{F}^\perp \), write
\[ 
    e=\begin{pmatrix}
    p & m^* \\ m & q
    \end{pmatrix}  
\]
where~\( p\in\End^*_B(\mathcal{F}) \),~\(
m\in\Hom^*_B(\mathcal{F},\mathcal{F}^\perp) \) and~\(
q\in\End^*_B(\mathcal{F}^\perp) \).
Assume that there exists~\( \varepsilon_q<\frac{1}{400}-\varepsilon \) 
such that~\( q \) is an~\( \varepsilon_q \)-quasi-projection.
Then:
\begin{enumerate}
\item \( p \) is an~\( (2\varepsilon+\varepsilon_q) \)-quasi-projection;
\item \( e \) is homotopic to~\( \left(\begin{smallmatrix}
    p & 0 \\ 0 & q
\end{smallmatrix}\right) \) as quasi-projections.   
\end{enumerate}
\end{lemma}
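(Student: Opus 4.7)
The plan is to expand $e^{2}$ in block form under the decomposition $\mathcal{E}\simeq\mathcal{F}\oplus\mathcal{F}^{\perp}$, use the resulting block bounds together with the hypothesis on $q$ to control the off-diagonal block $m$, and then scale the off-diagonal entries of $e$ linearly to zero.

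Writing out
\[
e^{2}-e \;=\; \begin{pmatrix} p^{2}+m^{*}m-p & pm^{*}+m^{*}q-m^{*} \\ mp+qm-m & mm^{*}+q^{2}-q \end{pmatrix},
\]
the hypothesis $\|e^{2}-e\|<\varepsilon$ forces every block to have norm less than $\varepsilon$. The bottom-right block combined with $\|q^{2}-q\|<\varepsilon_{q}$ gives $\|m\|^{2}=\|mm^{*}\|<\varepsilon+\varepsilon_{q}$; feeding this into the top-left block yields $\|p^{2}-p\|<2\varepsilon+\varepsilon_{q}$. Since the hypothesis forces $\varepsilon+\varepsilon_{q}<\tfrac{1}{400}$, this bound is well below $\tfrac{1}{4}$, proving (1) and showing in passing that $\operatorname{diag}(p,q)$ is itself a quasi-projection.

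For (2) I propose the straight-line homotopy that linearly shrinks the off-diagonal entries,
\[
e(s) \defeq \begin{pmatrix} p & (1-s)m^{*} \\ (1-s)m & q \end{pmatrix},\qquad s\in[0,1],
\]
which is self-adjoint for every $s$ (the scalar $1-s$ is real) and connects $e(0)=e$ to $e(1)=\operatorname{diag}(p,q)$. Setting $u\defeq 1-(1-s)^{2}\in[0,1]$, a direct block expansion gives
\[
e(s)^{2}-e(s) \;=\; \begin{pmatrix} (e^{2}-e)_{11}-u\,m^{*}m & (1-s)(e^{2}-e)_{12} \\ (1-s)(e^{2}-e)_{21} & (e^{2}-e)_{22}-u\,mm^{*} \end{pmatrix}.
\]
Splitting a self-adjoint $2\times 2$ block operator as diagonal plus off-diagonal gives $\|M\|\leq\max(\|M_{11}\|,\|M_{22}\|)+\|M_{12}\|$; applied here together with the estimates already obtained, this yields
\[
\|e(s)^{2}-e(s)\| \;\leq\; \varepsilon + u(\varepsilon+\varepsilon_{q}) + (1-s)\varepsilon \;\leq\; 3\varepsilon+\varepsilon_{q} \;<\; \tfrac{3}{400} \;<\; \tfrac{1}{4}
\]
uniformly in $s$. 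Hence $(e(s))_{s\in[0,1]}$ is a homotopy of quasi-projections, settling (2).

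The only delicate step is the block-norm bookkeeping, i.e.\ tracking how the growing diagonal corrections (proportional to $u$) trade off against the shrinking off-diagonal terms (proportional to $1-s$). The strong hypothesis $\varepsilon+\varepsilon_{q}<\tfrac{1}{400}$ is much more than what is needed for this lemma in isolation; presumably the constant $\tfrac{1}{400}$ is calibrated for the subsequent spectral-truncation step rather than being tight here.
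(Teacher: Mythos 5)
Your proof is correct, and the argument for part (1) is identical to the paper's. For part (2) you use the \emph{same} straight-line homotopy as the paper -- observe that
\[
(1-s)e + s\begin{pmatrix}p&0\\0&q\end{pmatrix} = \begin{pmatrix}p & (1-s)m^*\\ (1-s)m & q\end{pmatrix},
\]
so your path is precisely the one obtained from the convex combination $(1-s)e + s\,\mathrm{diag}(p,q)$ -- but you verify that it stays in the quasi-projections differently. The paper sets $f\defeq\mathrm{diag}(p,q)$, $\delta\defeq\norm{e-f}=\norm{m}<\sqrt{\varepsilon+\varepsilon_q}$, and invokes the black-box perturbation bound of \Cref{thm:close_quasi-idempotents_homotopic}, which requires verifying $\bigl(2\norm{e}+\tfrac54\delta+1\bigr)\delta+\varepsilon<\tfrac14$; this is where the constant $\tfrac{1}{400}$ enters, since the dominant term is $3\sqrt{\varepsilon+\varepsilon_q}$ and one needs $\sqrt{\varepsilon+\varepsilon_q}<\tfrac{1}{20}$. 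You instead expand $e(s)^2-e(s)$ in block form, note that the diagonal blocks gain a correction $-u\,m^*m$ (resp.\ $-u\,mm^*$) with $u=1-(1-s)^2$, and that the off-diagonal blocks shrink by a factor $(1-s)$, and then apply the standard self-adjoint block estimate $\norm{M}\leq\max(\norm{M_{11}},\norm{M_{22}})+\norm{M_{12}}$. This gives the much sharper uniform bound $3\varepsilon+\varepsilon_q$, correctly confirming your closing remark: the lemma itself only needs $\varepsilon+\varepsilon_q<\tfrac{1}{12}$, and the constant $\tfrac{1}{400}$ is calibrated for later use (notably \Cref{thm:index_pairing_submodule}), not for this lemma in isolation. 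Both routes are sound; yours is more elementary and more quantitatively efficient, at the cost of being specific to the off-diagonal-scaling homotopy rather than reusing the general \Cref{thm:close_quasi-idempotents_homotopic}.
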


\begin{proof}
Since~\( e \) is an~\( \varepsilon \)-quasi-projection, it follows that  
\begin{align*} 
\varepsilon&>\norm{e^2-e}=\norm*{\begin{pmatrix}
        p^2-p+m^*m & pm^*+m^*q-m^* \\
        mp+qm-m & mm^*+q^2-q
    \end{pmatrix}}\\
    &\geq\max\left\{\norm{p^2-p+m^*m},\norm{q^2-q+mm^*}\right\}.
\end{align*}
Also~\( q \) is an~\( \varepsilon_q \)-quasi-projection, so
\begin{gather*} 
\norm{m}^2=\norm{mm^*}\leq\varepsilon+\norm{q^2-q}<\varepsilon+\varepsilon_q,\\
\norm{p^2-p}<\norm{m^*m}+\varepsilon<
2\varepsilon+\varepsilon_q.
\end{gather*}
Now let~\( f\defeq
\left(\begin{smallmatrix}
    p & 0 \\ 0 & q
\end{smallmatrix}\right) \) and~\( \delta\defeq \norm{e-f} \).
Then we have
\[ 
    \delta=\norm*{\begin{pmatrix}
        0 & m^* \\ m & 0
    \end{pmatrix}
    }=\norm*{m}<\sqrt{\varepsilon+\varepsilon_q}.
\]
Therefore,
\begin{align*} 
\big(2\norm*{e}+\frac{5}{4}\delta+1\big)\delta+\varepsilon
&<\big(2(1+\varepsilon)+\frac{5}{4}\sqrt{\varepsilon+\varepsilon_q}+1\big)
\sqrt{\varepsilon+\varepsilon_q}+\varepsilon \\
&<\big(2+2\sqrt{\varepsilon+\varepsilon_q}+\frac{5}{4}\sqrt{\varepsilon+\varepsilon_q}+1+\sqrt{\varepsilon+\varepsilon_q}\big)\sqrt{\varepsilon+\varepsilon_q}
\\
&=3\sqrt{\varepsilon+\varepsilon_q}+\left(2+\frac{5}{4}+1\right)\left(\varepsilon+\varepsilon_q\right) \\
&<\left(3+\frac{17}{80}\right)\cdot\frac{1}{20}<\frac{1}{4}.
\end{align*}
It follows from \Cref{thm:close_quasi-idempotents_homotopic} that~\( f \)
is also a quasi-projection that is homotopic to~\( e \).
\end{proof}

Assume that~\( (E^\downarrow_{\lambda},E^\uparrow_{\lambda}) \) is
a spectral decomposition of~\( \widehat{E} \) for~\( D\oplus D \) with
spectral threshold~\( \lambda \). Decompose
\begin{equation}\label{eq:spectral_decomposition_et_ft}
    e_t=\begin{pmatrix} 
    p^e_{t,\lambda} & {m^e_{t,\lambda}}^* \\
    m^e_{t,\lambda} & q^e_{t,\lambda} 
    \end{pmatrix},\quad
    f_t=\begin{pmatrix} p^f_{t,\lambda} & {m^f_{t,\lambda}}^* \\
    m^f_{t,\lambda} & q^f_{t,\lambda} 
    \end{pmatrix}
\end{equation}
as in \eqref{eq:spectral_decomposition_operator},
i.e.~\(
p^e_{t,\lambda},p^f_{t,\lambda}\in\End_B^*(E_{\lambda}^{\downarrow})
\),~\(
m^e_{t,\lambda},m^f_{t,\lambda}\in\Hom_B^*(E_{\lambda}^{\downarrow},E_{\lambda}^{\uparrow})
\), and~\(
q^e_{t,\lambda},q^f_{t,\lambda}\in\End_B^*(E_{\lambda}^{\uparrow})
\). Recall that~\( f_t \) is given by the constant family of projections~\(
\left(\begin{smallmatrix}
    0 & 0 \\ 0 & \id
\end{smallmatrix}\right)\) on~\( \widehat{E} \). Therefore, we have
\[ 
    m^f_{t,\lambda}=0,\quad p_{t,\lambda}^f=\begin{pmatrix}
    0 & 0 \\ 0 & \id_{E_{\lambda}^\downarrow}
    \end{pmatrix},\quad
    q_{t,\lambda}^f=\begin{pmatrix}
    0 & 0 \\ 0 & \id_{E_{\lambda}^{\uparrow}}
    \end{pmatrix}.
\]
We shall show that for sufficiently large~\( t \) and sufficiently small~\(
\lambda\), then~\( p^e_{t,\lambda} \) and~\( p^f_{t,\lambda} \) are
quasi-projections in~\( \Mat_2(\Cpt_B(E)^+) \) that represents the same
class in~\( \K_0(\Cpt_B(E)^+) \). To this end, define
\[ 
    \Theta_t\defeq \begin{pmatrix}
    c_t & s_t \\ -s_t & c_t
    \end{pmatrix}\in\Mat_2(\End_B(E)).
\] 
Then~\( \Theta_t \) is a unitary multiplier of~\( \Mat_2(\Cpt_B(E)^+) \)
which commmutes with~\( D\oplus D \). Thus under any spectral decomposition
\( (E_\lambda^\downarrow,E_{\lambda}^\uparrow) \) for~\( D\oplus D \), we
have~\( \Theta_t \) is diagonal;~\( p^{\Theta}_{t,\lambda} \) and~\(
q^{\Theta}_{t,\lambda} \) are unitary operators on~\(
E_{\lambda}^\downarrow \) and~\( E_{\lambda}^{\uparrow} \), respectively.
Moreover, since~\( f_t \) also commutes with~\( D\oplus D \), we have  
\[ 
    q_{t,\lambda}^{\Theta}q_{t,\lambda}^fq_{t,\lambda}^\Theta{}^*=q_{t,\lambda}^{\Theta
    f\Theta^*},
\]
which is the truncation of the following operator onto~\(
E_{\lambda}^{\uparrow} \): 
\[ 
    \Theta_tf_t\Theta_t^*=\begin{pmatrix}
    c_t & s_t \\ -s_t & c_t
    \end{pmatrix}
    \begin{pmatrix}
    0 & 0 \\ 0 & \id
    \end{pmatrix}
    \begin{pmatrix}
    c_t & -s_t \\ s_t & c_t
    \end{pmatrix}
    =\begin{pmatrix}
    s_t^2 & c_ts_t \\ c_ts_t & c_t^2
    \end{pmatrix}.
\]

\begin{lemma}\label{lem:spectral_decomposition}
For each~\( t\in[1,\infty) \) and~\( \delta>0 \), 
there exists~\( \lambda>0 \) that depends on~\( t \) and~\( \delta \),
such that the following holds\textup{:}

If 
\( (E_{\lambda}^{\downarrow},E_{\lambda}^{\uparrow}) \) is a spectral
decomposition of~\( \widehat{E} \) for~\( D\oplus D \)
with spectral threshold~\( \lambda \). Then
\[
    \norm{q_{t,\lambda}^e-q^{\Theta}_{t,\lambda}q_{t,\lambda}^fq^{\Theta}_{t,\lambda}{}^*}\leq\delta.
\]
\end{lemma}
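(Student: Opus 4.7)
The plan is to exploit the commutation of $\Theta_t$ and $f_t$ with $D\oplus D$, hence with the projection $P_\lambda^\uparrow$ onto $E_\lambda^\uparrow$. This identifies $q^\Theta_{t,\lambda}q^f_{t,\lambda}q^\Theta_{t,\lambda}{}^*$ with the restriction of $\Theta_tf_t\Theta_t^*$ to $E_\lambda^\uparrow$, so the quantity to be estimated is
\[
    \bigl\|P_\lambda^\uparrow\bigl(e_t-\Theta_tf_t\Theta_t^*\bigr)P_\lambda^\uparrow\bigr\|.
\]

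A direct computation using $c_t^2+s_t^2=1$ shows that $e_t$ and $\Theta_tf_t\Theta_t^*$ share the same diagonal entries $s_t^2$ and $c_t^2$, so their difference is block off-diagonal with entry
\[
    \sqrt{c_ts_t}\,v\,\sqrt{c_ts_t}-c_ts_t=\sqrt{c_ts_t}\,(v-1)\,\sqrt{c_ts_t}
\]
(and its adjoint). The outer factor $\sqrt{c_ts_t}\oplus\sqrt{c_ts_t}\in\End_B^*(\widehat E)$ is a function of $D\oplus D$ and therefore commutes with $P_\lambda^\uparrow$, so the sandwich $P_\lambda^\uparrow(e_t-\Theta_tf_t\Theta_t^*)P_\lambda^\uparrow$ equals
\[
    \bigl(\sqrt{c_ts_t}\oplus\sqrt{c_ts_t}\bigr)\,P_\lambda^\uparrow\begin{pmatrix}0 & v-1\\v^*-1 & 0\end{pmatrix}P_\lambda^\uparrow\,\bigl(\sqrt{c_ts_t}\oplus\sqrt{c_ts_t}\bigr),
\]
whose norm is bounded by $\|v-1\|\cdot\|(\sqrt{c_ts_t}\oplus\sqrt{c_ts_t})P_\lambda^\uparrow\|^2\leq 2\|(\sqrt{c_ts_t}\oplus\sqrt{c_ts_t})P_\lambda^\uparrow\|^2$. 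The lemma then reduces to showing that this last quantity can be made arbitrarily small.

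The main obstacle is this final estimate, because the spectral assumption provides only the operator inequality $\|(D\oplus D)\xi\|\geq\lambda\|\xi\|$ on $E_\lambda^\uparrow$ and not a spectral projection of $D$ in the general Hilbert \Cst-module setting. I would handle it by uniform approximation of the $\Co$-function $g(x)\defeq\sqrt{c(x/t)s(x/t)}$ by $g_n\in\Cc(\R)$. For a compactly supported $g_n$ the product $xg_n(x)$ is bounded, and for $\xi\in P_\lambda^\uparrow\widehat E\cap\dom(D\oplus D)$ the commutations of $g_n(D\oplus D)$ with both $P_\lambda^\uparrow$ and $D\oplus D$ keep $g_n(D\oplus D)\xi$ inside the same space, so the spectral inequality gives
\[
    \lambda\|g_n(D\oplus D)\xi\|\leq\|(D\oplus D)g_n(D\oplus D)\xi\|=\|(xg_n)(D\oplus D)\xi\|\leq\|xg_n\|_\infty\|\xi\|.
\]
Hence $\|g_n(D\oplus D)P_\lambda^\uparrow\|\leq\lambda^{-1}\|xg_n\|_\infty$, and together with $\|g-g_n\|_\infty<\varepsilon$ this yields $\|g(D\oplus D)P_\lambda^\uparrow\|\leq\varepsilon+\lambda^{-1}\|xg_n\|_\infty$. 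Given $\delta>0$, I would first choose $\varepsilon$ sufficiently small, then take $\lambda$ large enough so that $2(\varepsilon+\lambda^{-1}\|xg_n\|_\infty)^2\leq\delta$, which finishes the proof.
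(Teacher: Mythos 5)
Your proposal follows the same overall strategy as the paper's proof: use the commutation of $\Theta_t,\,f_t$ and $\sqrt{c_ts_t}\oplus\sqrt{c_ts_t}$ with $D\oplus D$ (and hence, implicitly, with $P_\lambda^\uparrow$) to identify the quantity with a compression of the off-diagonal matrix whose entry is $\sqrt{c_ts_t}(v-1)\sqrt{c_ts_t}$, and then exploit the decay of $c(x)s(x)$ at infinity together with the spectral-threshold inequality on $E_\lambda^\uparrow$. Where you go further than the paper is in the last step: the paper simply \emph{asserts} that the pointwise bound $c(x)s(x)<\delta/2$ for $\abs{x}>t^{-1}\lambda$ implies the operator inequality $\braket{c_ts_t\xi,c_ts_t\xi}\leq\tfrac12\delta\braket{\xi,\xi}$ on $E_\lambda^\uparrow$, whereas you supply an actual argument for it by approximating $g(x)=\sqrt{c(x/t)s(x/t)}$ uniformly by $g_n\in\Cc(\R)$, bounding $\norm{g_n(D\oplus D)P_\lambda^\uparrow}\leq\lambda^{-1}\norm{xg_n}_\infty$ via the threshold inequality, and then letting $\norm{g-g_n}_\infty\to 0$. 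This is exactly the kind of care one needs in the Hilbert $\Cst$-module setting where no Borel functional calculus (hence no spectral projection of $D$) is available, so your version is both correct and slightly more rigorous. One shared (but unaddressed) point in both proofs: Definition~\ref{def:spectral_decomposition} does not actually stipulate that $P_\lambda^\uparrow$ commutes with $D\oplus D$ or with continuous functions thereof, yet both you and the paper rely on this commutation; it would be cleaner to make this an explicit hypothesis on the spectral decomposition.
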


\begin{proof}
We compute
\begin{multline*}
\norm*{e_t-\Theta_tf_t\Theta_t^*}\leq\norm*{\begin{pmatrix}
    s_t^2 & \sqrt{c_ts_t}v\sqrt{c_ts_t} \\
    \sqrt{c_ts_t}v^*\sqrt{c_ts_t} & c_t^2
\end{pmatrix}-\begin{pmatrix}
    s_t^2 & \sqrt{c_ts_t}\sqrt{c_ts_t} \\
    \sqrt{c_ts_t}\sqrt{c_ts_t} & c_t^2
\end{pmatrix}} \\
=\norm*{\begin{pmatrix}
    0 & \sqrt{c_ts_t}(v-1)\sqrt{c_ts_t} \\
    \sqrt{c_ts_t}(v^*-1)\sqrt{c_ts_t} & 0
\end{pmatrix}
}
=\norm*{\sqrt{c_ts_t}(v-1)\sqrt{c_ts_t}}
\leq 2\norm*{c_ts_t}.
\end{multline*}
The functions~\( s,c\colon \R\to[0,1] \) are continuous and satisfy
\[
\lim_{x\to+\infty}s(x)=1,\quad\lim_{x\to+\infty}c(x)=0.
\]
So both of them extend to continuous functions~\( [-\infty,+\infty]\to[0,1]
\). The function~\( c(x)s(x) \) is positive and 
tends to~\( 0 \) as~\( x\to\infty \), that is, fix
\( t\in[1,+\infty) \) and~\( \delta>0 \), 
there exists~\( \lambda>0 \) such that~\(
0<c(x)s(x)<\frac{1}{2}\delta \) for all~\( x>t^{-1}\lambda \). Then
for all~\( \xi\in E_{\lambda}^{\uparrow} \), the operator inequality
\[ 
    \braket{c_ts_t\xi,c_ts_t\xi}
    \leq\frac{1}{2}\delta\cdot\braket{\xi,\xi}
\]
holds. This proves the claim.
\end{proof}

Providing the existence of a spectral decomposition
\( (E_\lambda^{\downarrow},E_\lambda^{\uparrow}) \),
the lemmas above allows us to obtain a pair of quasi-projections that
are supported on the lower spectral submodule~\( E_{\lambda}^{\downarrow}
\). 

\begin{theorem}\label{thm:index_pairing_submodule}
Let~\( A \) be a unital \Cst-algebra. Let~\( (\mathcal{A},E,D) \) be an
odd unbounded Kasparov~\( A \)-\( B \)--module and
\( v\in \mathcal{A} \) be a unitary. Write~\( [v]\in\K_1(A) \) and~\(
[D]\in\KK_1(A,B) \) for their representing classes.
Assume that the followings hold\textup{:} 
\begin{enumerate}
\item There exist~\( 0<\delta<\frac{1}{400} \) 
and a spectral decomposition~\(
(E_{\lambda}^{\downarrow},E_{\lambda}^{\uparrow}) \) of~\( \widehat{E} \) 
with spectral threshold~\( \lambda \) such that
\[ 
\norm*{q_{t,\lambda}^e-q_{t,\lambda}^{\Theta}q_{t,\lambda}^fq_{t,\lambda}^{\Theta}{}^*}\leq\delta.
\]
\item There exists~\( R>0 \) such that \eqref{eq:assmption_constants_R}
holds. 
\end{enumerate}
Then for any~\( 0<\varepsilon<\frac{1}{400}-\delta \) and any~\(
t\in[1,\infty) \) that satisfy  
\[  
    t>2\varepsilon^{-1}R\norm*{[D,v]},
\]
the \K-theoretic index pairing~\( \braket{[v],[D]}\in\K_0(\Cpt_B(E)) \) 
is represented by the formal difference of \emph{quasi-projections}
\[ 
    [p^e_{t,\lambda}]-[p^f_{t,\lambda}],
\]
where both~\( p^e_{t,\lambda} \) and~\( p^f_{t,\lambda} \) are
quasi-projections supported on the lower 
spectral submodule~\( E_{\lambda}^{\downarrow} \) as in
\eqref{eq:spectral_decomposition_et_ft}.
\end{theorem}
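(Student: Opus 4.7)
I would start from \Cref{thm:index_pairing_quasi-projections}: under assumption~(2), it identifies the \K-theoretic index pairing with the formal difference of quasi-projections $[e_t]-[f_t]\in\K_0(\Cpt_B(E))$, valid for any $t>2\varepsilon^{-1}R\norm{[D,v]}$. Using the spectral decomposition $\widehat{E}=E_\lambda^\downarrow\oplus E_\lambda^\uparrow$ of assumption~(1), I would write $e_t$ and $f_t$ in $2\times 2$ block form as in \eqref{eq:spectral_decomposition_et_ft}. Because $f_t=\mathrm{diag}(0,\id)$ commutes with $D\oplus D$, its off-diagonal block $m^f_{t,\lambda}$ vanishes automatically, so $f_t$ is already spectrally block-diagonal with $p^f_{t,\lambda}$ and $q^f_{t,\lambda}$ genuine projections. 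The real work is therefore to diagonalise $e_t$ up to quasi-projection homotopy and to compare the upper-block classes $[q^e_{t,\lambda}]$ and $[q^f_{t,\lambda}]$.

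For the comparison, assumption~(1) is tailor-made. Since $\Theta_t$ also commutes with $D\oplus D$, its truncation $q^\Theta_{t,\lambda}$ is unitary on $E_\lambda^\uparrow$, so $q^\Theta_{t,\lambda}q^f_{t,\lambda}q^\Theta_{t,\lambda}{}^*$ is a genuine projection, unitarily equivalent to $q^f_{t,\lambda}$. The hypothesised closeness $\norm{q^e_{t,\lambda}-q^\Theta_{t,\lambda}q^f_{t,\lambda}q^\Theta_{t,\lambda}{}^*}\leq\delta<\tfrac{1}{400}<\tfrac{1}{17}$ together with \Cref{thm:close_quasi-idempotents_homotopic}(2) then shows that $q^e_{t,\lambda}$ is a $4\delta$-quasi-projection, homotopic through quasi-projections to that projection; in particular $[q^e_{t,\lambda}]=[q^f_{t,\lambda}]$. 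With this smallness bound on $q^e_{t,\lambda}$ in hand, \Cref{lem:reduction_to_diagonal} produces a quasi-projection homotopy from $e_t$ to $\mathrm{diag}(p^e_{t,\lambda},q^e_{t,\lambda})$; the analogous decomposition of $f_t$ is tautological. \Cref{prop:K-theory_quasi-idempotents} then assembles
\[
    [e_t]-[f_t]=\bigl([p^e_{t,\lambda}]+[q^e_{t,\lambda}]\bigr)-\bigl([p^f_{t,\lambda}]+[q^f_{t,\lambda}]\bigr)=[p^e_{t,\lambda}]-[p^f_{t,\lambda}],
\]
and combining with \Cref{thm:index_pairing_quasi-projections} gives the claim.

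The principal obstacle is bookkeeping the quasi-projection parameters through this chain of estimates: \Cref{thm:close_quasi-idempotents_homotopic}(2) outputs that $q^e_{t,\lambda}$ is a $4\delta$-quasi-projection, whereas \Cref{lem:reduction_to_diagonal} requires an upper block of parameter strictly less than $\tfrac{1}{400}-\varepsilon$. One must therefore verify that the stated bounds $\varepsilon<\tfrac{1}{400}-\delta$ and $\delta<\tfrac{1}{400}$ are strong enough (or can be slightly tightened) for this feeding-in to go through, which is routine but delicate; everything else is a mechanical assembly of lemmas already established in \Cref{prop:K-theory_quasi-idempotents,thm:close_quasi-idempotents_homotopic,lem:reduction_to_diagonal,thm:index_pairing_quasi-projections}.
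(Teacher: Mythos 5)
Your proposal reproduces the paper's argument step by step (comparison $[q^e_{t,\lambda}]=[q^f_{t,\lambda}]$ via the projection $q^\Theta_{t,\lambda}q^f_{t,\lambda}q^\Theta_{t,\lambda}{}^*$ and \Cref{thm:close_quasi-idempotents_homotopic}, diagonal reduction via \Cref{lem:reduction_to_diagonal}, assembly via \Cref{prop:K-theory_quasi-idempotents} and \Cref{thm:index_pairing_quasi-projections}), and the bookkeeping concern you flag is in fact a genuine slip that the paper's own proof glosses over: with $q^e_{t,\lambda}$ a $4\delta$-quasi-projection, \Cref{lem:reduction_to_diagonal} as stated needs $4\delta<\tfrac{1}{400}-\varepsilon$, which does \emph{not} follow from $\varepsilon+\delta<\tfrac{1}{400}$, so either the hypothesis should read $\varepsilon+4\delta<\tfrac{1}{400}$ (or use the sharper $\varepsilon_q=(3+\delta)\delta$), or the threshold $\tfrac{1}{400}$ in \Cref{lem:reduction_to_diagonal}, which its proof leaves ample room for, should be relaxed. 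The one further detail you should not leave implicit is that the straight-line homotopy from $e_t$ to $\mathrm{diag}(p^e_{t,\lambda},q^e_{t,\lambda})$ remains in $\Mat_2(\Cpt_B(E)^+)$; the paper notes this follows because $e_t\in\Mat_2(\Cpt_B(E)^+)$ forces the off-diagonal block $m^e_{t,\lambda}$ to be compact.
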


\begin{proof}
Since~\( \Theta_t \) is unitary and commutes with~\( D\oplus D \), 
it follows that~\( q^{\Theta}_{t,\lambda} \) is also a unitary operator on 
\( E_{\lambda}^{\uparrow} \)  for any
spectral decomposition~\( (E_{\lambda}^\downarrow,E_{\lambda}^\uparrow) \).
Thus the operator
\( q^{\Theta}_{t,\lambda}q^f_{t,\lambda}q^{\Theta}_{t,\lambda}{}^* \)
is a projection on~\( E_{\lambda}^{\uparrow} \),
which is unitarily equivalent to~\( q^f_{t,\lambda} \). 
Since 
\[
    \norm*{q^e_{t,\lambda}-q^{\Theta}_{t,\lambda}q^f_{t,\lambda}q^{\Theta}_{t,\lambda}{}^*}\leq\delta<\frac{1}{400}-\varepsilon,
\]
it follows from \Cref{thm:close_quasi-idempotents_homotopic} that~\(
q^e_{t,\lambda} \) is a~\( 4\delta \)-quasi-projection that is homotopic 
to~\( q^{\Theta}_{t,\lambda}q^f_{t,\lambda}q^{\Theta}_{t,\lambda}{}^* \),
thus
\[ 
    [q^e_{t,\lambda}]=[q^\Theta_{t,\lambda}q^f_{t,\lambda}
    q^{\Theta}_{t,\lambda}{}^*]
    =[q^f_{t,\lambda}]\in\K_0(\Cpt_B(E)^+).
\]

Also \Cref{lem:reduction_to_diagonal} implies that~\( e_t \) is
homotopic to~\( p^e_{t,\lambda}\oplus q^e_{t,\lambda} \) via a straightline
homotopy
\[ 
    [0,1]\to\End_B(\widehat{E}),\quad s\mapsto\begin{pmatrix}
    p^e_{t,\lambda} & s\cdot m^e_{t,\lambda}{}^* \\ 
    s\cdot m^e_{t,\lambda} & q^f_{t,\lambda}
    \end{pmatrix}.
\]
In particular, since~\( e_t\in\Mat_2(\Cpt_B(E)^+) \), it follows that the
operator
\[ 
    \begin{pmatrix}
    0 & m_{t,\lambda}^e{}^* \\
    m_{t,\lambda}^e{}^* & 0
    \end{pmatrix}  
\]
is compact. This shows that this straightline homotopy actually
lands in~\( \Mat_2(\Cpt_B(E)^+) \). Therefore, we have
\[ 
[e_t]=\left[\begin{pmatrix}
p^e_{t,\lambda} & 0 \\ 0 & q^e_{t,\lambda}
\end{pmatrix}\right]=[p^e_{t,\lambda}]+[q^e_{t,\lambda}]\in\K_0(\Cpt_B(E)^+),
\]
and
\[ 
    [q^e_{t,\lambda}]=[q^f_{t,\lambda}]\in\K_0(\Cpt_B(E)^+).
\]
On the other hand, the operator~\( f_t \) is diagonal under the
spectral decomposition~\(
\widehat{E}\simeq E_{\lambda}^{\downarrow}\oplus E_{\lambda}^{\uparrow}
\). So  
\[
    [f_t]=[p^f_{t,\lambda}]+[q^f_{t,\lambda}].
\]
Therefore, we have
\[ 
    [e_t]-[f_t]=[p^e_{t,\lambda}]+[q^e_{t,\lambda}]-[p^f_{t,\lambda}]-[q^f_{t,\lambda}]=[p^e_{t,\lambda}]-[p^f_{t,\lambda}].\qedhere
\]
\end{proof}

Note that the choice of the lower spectral threshold~\( \lambda \) 
depends on~\( \delta \) and~\( t \).  
As~\( t \) increases, then the threshold
\( \lambda \) will typically also increase.
Nevertheless, for each \emph{fixed}~\( t\in
[1,\infty) \), legitimated by the condition
\[ 
    t>2\varepsilon^{-1}R\norm*{[D,v]}
\]
as in \Cref{thm:index_pairing_quasi-projections}, we are able to produce a
finite spectral threshold~\( \lambda \). Assume that there exists a
spectral decomposition~\( (E_\lambda^\downarrow,E_\lambda^\uparrow) \) with
spectral threshold~\( \lambda \), then we are able to ``spectrally''
truncate~\( [e_t]-[f_t] \) to the lower spectral submodule~\(
E_{\lambda}^{\downarrow} \), which represents the same K-theoretic index
pairing. 

\subsection{Emergence of the spectral localiser}
\label{sec:spectral_localiser_spectral_triple}
The precise formula for~\( p^e_{t,\lambda} \) and~\( p^f_{t,\lambda} \), as
well as their relevant constants~\( R>0 \) and~\( \lambda>0 \), are subject
to the specific choices of functions~\( c \) and~\( s \). The choices that
we will be employing to connect to the spectral localiser are given as
follows.

\begin{theorem}\label{thm:choice_cs}
Let
\begin{equation}\label{eq:choice_cs}
    c(x)\defeq\sqrt{\frac{1}{2}-\frac{1}{2}x(1+x^2)^{-\sfrac{1}{2}}},\quad 
    s(x)\defeq\sqrt{\frac{1}{2}+\frac{1}{2}x(1+x^2)^{-\sfrac{1}{2}}},
\end{equation}
\textup{(}see \Cref{fig:cs}\textup{)}. Then\textup{:}
\begin{enumerate}
\item The constant~\( R \) in \eqref{eq:assmption_constants_R} can be
chosen to be~\( 2 \).
\item The spectral threshold~\( \lambda \) as in
\Cref{lem:spectral_decomposition} can be chosen to be~\( t\delta^{-1}
\). 
\end{enumerate}
\end{theorem}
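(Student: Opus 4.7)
The plan is to verify both assertions by direct computation with the specific functions, exploiting two identities. First, expanding \(c^2 s^2 = \tfrac{1}{4}(1-F(x))(1+F(x)) = \tfrac{1}{4}(1+x^2)^{-1}\) with \(F(x) = x(1+x^2)^{-1/2}\) gives
\[
    c(x)s(x) = \tfrac{1}{2}(1+x^2)^{-1/2}.
\]
Second, the half-angle formulas applied to \(\theta(x) = \arctan(x)\) yield \(c(x) = \sin(\tfrac{\pi}{4}-\tfrac{\theta(x)}{2})\) and \(s(x) = \cos(\tfrac{\pi}{4}-\tfrac{\theta(x)}{2})\).

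For part~(2), the first identity gives \(c_t s_t = \tfrac{1}{2}(1+t^{-2}D^2)^{-1/2}\) as a function of \(D\). On the upper spectral submodule \(E_\lambda^\uparrow\) one has \(D^2 \geq \lambda^2\), so
\[
    \bigl\|c_t s_t|_{E_\lambda^\uparrow}\bigr\| \leq \tfrac{1}{2}(1+t^{-2}\lambda^2)^{-1/2} \leq \frac{t}{2\lambda}.
\]
Choosing \(\lambda = t\delta^{-1}\) yields \(\|c_t s_t|_{E_\lambda^\uparrow}\| \leq \delta/2\), which is precisely the estimate used in the proof of \Cref{lem:spectral_decomposition}.

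For part~(1), the bound for \(\sqrt{c_t s_t} = \tfrac{1}{\sqrt{2}}(1+t^{-2}D^2)^{-1/4}\) is handled directly using the integral representation \(y^{-1/4} = \tfrac{\sin(\pi/4)}{\pi}\int_0^\infty \mu^{-1/4}(y+\mu)^{-1}\,d\mu\). Setting \(R_\mu = (1+t^{-2}D^2+\mu)^{-1}\), the resolvent identity and the bound \(\|DR_\mu\|\leq t/(2\sqrt{1+\mu})\) give \(\|[R_\mu, v]\|\leq t^{-1}\|[D,v]\|/(1+\mu)^{3/2}\), and integrating against \(\mu^{-1/4}\) yields a constant times \(t^{-1}\|[D,v]\|\) well below~\(2\).

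The main obstacle lies in bounding \(\|[c_t, v]\|\) and \(\|[s_t, v]\|\) directly, because both \(c_t\) and \(s_t\) have \(0\) in their spectrum, which prevents bounding \(\|[c_t, v]\|\) by \(\|[c_t^2, v]\|\) via the identity \([c_t^2, v] = c_t[c_t, v] + [c_t, v]c_t\). The arctan factorisation sidesteps this: let \(B_t \defeq \arctan(t^{-1}D)\), a bounded self-adjoint operator with \(\|B_t\|\leq \pi/2\). For any bounded self-adjoint \(X\), the Duhamel identity \(e^{iX}ve^{-iX}-v = \int_0^1 ie^{isX}[X,v]e^{-isX}\,ds\) gives \(\|[e^{iX}, v]\|\leq \|[X, v]\|\), hence \(\|[c_t, v]\|,\,\|[s_t, v]\|\leq \tfrac{1}{2}\|[B_t, v]\|\). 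For \(\|[B_t, v]\|\) I use the integral representation \(\arctan(y) = \int_0^1 y(1+s^2y^2)^{-1}\,ds\) and set \(R = (1+s^2 t^{-2}D^2)^{-1}\). The algebraic identity \(s^2 t^{-2}DRD = 1-R\) causes a crucial cancellation in the resolvent commutator, simplifying it to
\[
    [t^{-1}DR, v] = t^{-1}R[D,v]R - s^2 t^{-3}DR[D,v]DR,
\]
where the two summands have operator norm at most \(t^{-1}\|[D,v]\|\) and \(\tfrac{1}{4}t^{-1}\|[D,v]\|\) respectively (using \(\|st^{-1}DR\|\leq \tfrac{1}{2}\) for the second). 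Integrating over \(s\in[0,1]\) yields \(\|[B_t, v]\|\leq \tfrac{5}{4}t^{-1}\|[D,v]\|\), hence \(\|[c_t, v]\|,\|[s_t, v]\|\leq \tfrac{5}{8}t^{-1}\|[D,v]\| < 2t^{-1}\|[D,v]\|\), confirming that \(R=2\) suffices.
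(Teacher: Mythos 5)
Your proof is correct, and for part (1) you take a genuinely different route from the paper for the bounds on $\norm{[c_t,v]}$ and $\norm{[s_t,v]}$. Both you and the paper first pass to the half-angle substitution (your $c(x)=\sin(\tfrac{\pi}{4}-\tfrac{\arctan x}{2})$ is the paper's $c(x)=\cos(\tfrac{1}{2}(\tfrac{\pi}{2}+\arctan x))$ in disguise), and both reduce to estimating the commutator of $v$ with $\arctan(t^{-1}D)$. But the paper then invokes the Fourier-transform commutator bound (\Cref{lem:commutator_estimate}): since $u'(x)=(1+x^2)^{-1}$ has Fourier transform $\pi e^{-\abs{\xi}}$, one gets $C_u=1$, hence $\norm{[u_t,v]}\leq t^{-1}\norm{[D,v]}$ and then $\norm{[c_t,v]}\leq\tfrac{1}{2}t^{-1}\norm{[D,v]}$ via Euler's formula and $\norm{[e^{\mathrm{i}X},v]}\leq\norm{[X,v]}$. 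You instead derive $\norm{[e^{\mathrm{i}X},v]}\leq\norm{[X,v]}$ from Duhamel's formula and then attack $[\arctan(t^{-1}D),v]$ via the integral representation $\arctan y=\int_0^1 y(1+s^2y^2)^{-1}\,\mathrm{d}s$ together with the key cancellation $s^2t^{-2}DR_sD=1-R_s$. Your algebra
\[
[t^{-1}DR_s,v]=t^{-1}R_s[D,v]R_s-s^2t^{-3}DR_s[D,v]DR_s
\]
checks out, and the bounds $\norm{R_s}\leq 1$, $\norm{st^{-1}DR_s}\leq\tfrac{1}{2}$ give $\norm{[B_t,v]}\leq\tfrac{5}{4}t^{-1}\norm{[D,v]}$ and hence $\norm{[c_t,v]},\norm{[s_t,v]}\leq\tfrac{5}{8}t^{-1}\norm{[D,v]}$, slightly weaker than the paper's $\tfrac{1}{2}$ but far below the required $R=2$. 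The advantage of your route is that it is entirely resolvent-algebraic and avoids any Fourier analysis; the paper's route gives the sharper constant with essentially no computation once the Fourier lemma is in place. For $\sqrt{c_ts_t}$ you and the paper both use the fractional-power integral formula; your finer bound $\norm{DR_\mu}\leq t/(2\sqrt{1+\mu})$ yields a Beta-function constant $\tfrac{1}{2\pi}\operatorname{B}(\tfrac{3}{4},\tfrac{3}{4})\approx 0.27$, whereas the paper's cruder estimate $\norm{D_t(1+\lambda+D_t^2)^{-1}}\leq 1$ still lands exactly at $2$. One small presentational gap: you do not actually write out the final constant for $\norm{[\sqrt{c_ts_t},v]}$ ("well below 2" is true but deserves the one-line Beta-function evaluation). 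Part (2) is identical to the paper's argument.
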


\begin{proof}
What (1) claims is the following: the following
estimation holds for all~\( t\in[1,\infty) \): 
\[
\left.\begin{array}{r}
\norm*{[c_t,v]} \\
\norm*{[s_t,v]} \\
\norm*{[\sqrt{c_ts_t},v]}
\end{array}\right\}\leq 2t^{-1}\norm*{[D,v]}.
\]
We provide the proof of (1) in \Cref{app:commutator_estimates}, based on
an integral formula as well as trigonometric substitutions. To see
(2), we note that
\[ 
    c(x)s(x)=\sqrt{\frac{1}{4}-\frac{1}{4}x^2(1+x^2)^{-1}}=\frac{1}{2}\frac{1}{\sqrt{x^2+1}}.
\]
So~\( c_ts_t=\frac{1}{2}(1+t^{-2}D^2)^{-\sfrac{1}{2}} \). Therefore, for
each~\( \delta>0 \), choose~\( \lambda>t\delta^{-1} \), it follows from
\Cref{lem:spectral_decomposition} that
\[
\norm*{q^e_{t,\lambda}-q^{\Theta}_{t,\lambda}q^f_{t,\lambda}q^{\Theta}_{t,\lambda}{}^*}\leq
\frac{1}{2}\frac{1}{\sqrt{1+t^{-2}\lambda^2}}
\leq\frac{1}{2t^{-1}\lambda}\leq
\frac{1}{2}\delta.\qedhere \]
\end{proof}

\begin{figure}[h!]
\centering
\includegraphics[width=0.8\textwidth]{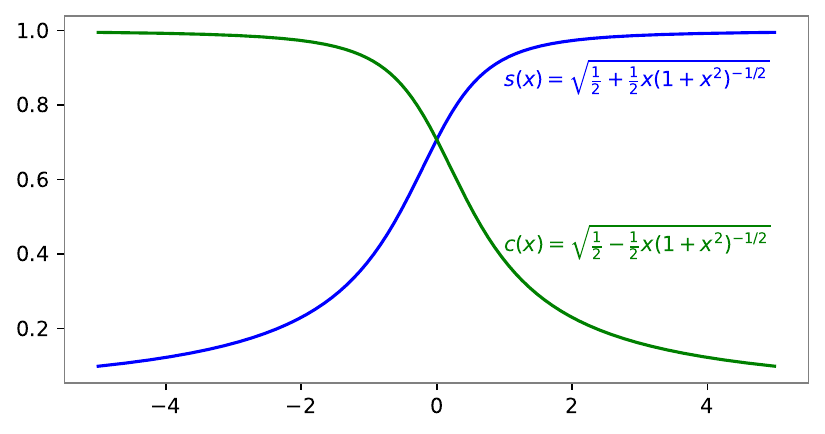}
\caption{The functions~\( c(x) \) and~\( s(x) \) in
\eqref{eq:choice_cs}.}
\label{fig:cs}
\end{figure}

Now we connect our results to the (odd) spectral localisers of Loring
and Schulz-Baldes.
In this case of interest, we let~\( B=\C \). Then an 
unbounded Kasparov~\( A \)-\( B \)--module is the same thing
as an odd special triple 
\( (\mathcal{A},\mathcal{H},D) \) over~\( A \).
As explained in \Cref{ex:spectral_decomposition_Hilbert_space}, any Borel
subset~\( I\subseteq(-\infty,-\lambda)\cup(\lambda,+\infty) \) gives a
spectral decomposition~\( (\mathcal{H}_{\R\setminus I},\mathcal{H}_I) \) 
with spectral threshold~\( \lambda \). Following the notations in
\cites{Loring-SBaldes:Odd_spectral_localiser,Loring-SBaldes:Odd_spectral_localiser_sf},
we set
\begin{gather*} 
\mathcal{H}_{\lambda}\defeq
\chi_{[-\lambda,\lambda]}(D\oplus D)\widehat{\mathcal{H}},\\
\mathcal{H}_{\lambda^\mathrm{c}}\defeq
\chi_{(-\infty,-\lambda)\cup(\lambda,\infty)}(D\oplus
D)\widehat{H}.
\end{gather*}
Then~\( (\mathcal{H}_{\lambda},\mathcal{H}_{\lambda^\mathrm{c}}) \) is a
spectral decomposition of~\( \widehat{\mathcal{H}} \) with spectral
threshold~\( \lambda \).  

The construction in the previous section gives 
a pair of quasi-projections~\( p^e_{t,\lambda} \)
and~\( p^f_{t,\lambda} \) on the spectral space~\(
\mathcal{H}_{\lambda} \). In particular,~\( \mathcal{H}_\lambda \) is
\emph{finite-dimensional} as~\( D \) has discrete spectrum.  
The formal difference
\[ 
    [p^e_t]-[p^f_t]
\]
defines a class in~\( \K_0(\Cpt)\simeq\Z \), which 
coincides with the index pairing~\( \braket{[v],[D]} \). 
Since both~\( p^e_{t,\lambda}
\) and~\( p^f_{t,\lambda} \) have finite rank, it follows that the class  
represented by the formal difference is
\[ 
    \rank(\kappa_0(p^e_{t,\lambda}))-\rank(\kappa_0(p^f_{t,\lambda})).
\]

Now we relate the
constructions in the previous chapter to the spectral localiser.
We begin with the following simple observation.

\begin{lemma}\label{lem:signature_quasi-projection}
Let~\( p \) be a quasi-projection on a finite-dimensional Hilbert space~\(
\mathcal{K}\), then
\begin{align*} 
    \sig(2p-\id_\mathcal{K})&=\rank(\kappa_0(p))-\rank(\kappa_0(\id_\mathcal{K}-p))\\ &=2\rank(\kappa_0(p))-\dim\mathcal{K}.
\end{align*}
\end{lemma}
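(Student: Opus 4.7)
The plan is to diagonalise $p$ and count eigenvalues directly. Since $p$ is a self-adjoint quasi-projection, its spectrum is real and contained in $\{x\in\R : |x^2-x|<\tfrac14\}$, which consists of two open intervals, one around $0$ and one around $1$; in particular, $\tfrac12 \notin \spc(p)$. Hence $2p - \id_\mathcal{K}$ is a self-adjoint operator whose spectrum $2\spc(p) - 1$ avoids $0$, so it is invertible and its signature is well-defined as the difference between the number of positive and negative eigenvalues (with multiplicity).

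I would then use the spectral theorem on the finite-dimensional space $\mathcal{K}$ to write $p = \sum_\mu \mu\, P_\mu$ with $P_\mu$ the orthogonal eigenprojection for eigenvalue $\mu$. First, I note that the definition of $\kappa_0$ gives
\[
\kappa_0(p) = \sum_{\mu > 1/2} P_\mu, \qquad \kappa_0(\id_\mathcal{K}-p) = \sum_{\mu < 1/2} P_\mu,
\]
since the eigenvalues of $\id_\mathcal{K}-p$ are $1-\mu$ and $1-\mu > 1/2 \iff \mu < 1/2$. Consequently, $\rank(\kappa_0(p))$ equals the number of eigenvalues of $p$ strictly greater than $\tfrac12$, and $\rank(\kappa_0(\id_\mathcal{K}-p))$ equals the number strictly less than $\tfrac12$ (both counted with multiplicity).

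Next, I translate this to eigenvalues of $2p-\id_\mathcal{K}$: its eigenvalues are $\{2\mu-1 : \mu \in \spc(p)\}$, and $2\mu-1>0 \iff \mu>\tfrac12$, while $2\mu-1<0 \iff \mu<\tfrac12$. Therefore
\[
\sig(2p-\id_\mathcal{K}) = \#\{\mu > \tfrac12\} - \#\{\mu < \tfrac12\} = \rank(\kappa_0(p)) - \rank(\kappa_0(\id_\mathcal{K}-p)),
\]
which is the first equality. The second equality then follows because $2p-\id_\mathcal{K}$ is invertible, so the two ranks add up to $\dim\mathcal{K}$, and hence $\rank(\kappa_0(\id_\mathcal{K}-p)) = \dim\mathcal{K} - \rank(\kappa_0(p))$.

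There is no substantial obstacle here: the only observation that needs care is that the spectrum of a self-adjoint quasi-projection does not meet $\tfrac12$, which is what makes both $\kappa_0(p)$ and $2p-\id_\mathcal{K}$ behave cleanly via holomorphic (indeed continuous) functional calculus. Everything else is a direct eigenvalue count in finite dimensions.
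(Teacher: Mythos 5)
Your proof is correct and follows essentially the same route as the paper's: both hinge on the observation that~$\sfrac{1}{2}\notin\spc(p)$ for a self-adjoint quasi-projection, and then compare~$\kappa_0$ with the sign of~$2p-\id_{\mathcal{K}}$ via functional calculus. The paper phrases this through~$\sig(L)=\tr(\operatorname{sign}(L))$ and the pointwise identity~$\operatorname{sign}(2x-1)=2\kappa_0(x)-1$ on~$\spc(p)$, whereas you diagonalise explicitly and count eigenvalues; the two are the same argument in different clothing.
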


\begin{proof}
Recall that the signature of a finite-dimensional, self-adjoint matrix~\( L
\) is given by
\[ 
    \sig(L)=\#(\text{positive eigenvalues of~\( L \)})-\#(\text{negative
    eigenvalues of~\( L \)}).
\]
Thus the signature of~\( L \) is the trace of the sign of~\( L \):  
\[ 
    \sig(L)=\tr(\operatorname{sign}(L)),\quad
    \operatorname{sign}(x)\defeq\begin{cases}
    1 & x>0; \\
    0 & x=0; \\
    -1 & x<0.
    \end{cases}
\]
Notice that
\[ 
    \operatorname{sign}(2x-1)=\begin{cases}
    1 & x>\sfrac{1}{2}; \\
    0 & x=\sfrac{1}{2}; \\
    -1 & x<\sfrac{1}{2}.
    \end{cases}    
\]
Since~\( p \) is a quasi-projection on~\( \mathcal{K} \), 
we have~\( \sfrac{1}{2}\notin\spc(p)
\) and~\( \sfrac{1}{2}\notin\spc(\id_{\mathcal{K}}-p) \), 
hence~\( \operatorname{sign}(2x-1) \) coincides with
\[
    2\kappa_0(x)-1=\kappa_0(x)-(1-\kappa_0(x))=\kappa_0(x)-\kappa_0(1-x)
\]
on the spectrum of~\( p \). Now the claim follows as
\begin{multline*} 
\sig(2p-\id_\mathcal{K})=\tr(\operatorname{sign}(2p-\id_\mathcal{K}))=\tr(2\kappa_0(p)-\id_\mathcal{K})\\
=2\tr\kappa_0(p)-\tr\id_{\mathcal{K}}=2\rank\kappa_0(p)-\dim\mathcal{K}.\qedhere
\end{multline*}
\end{proof}

\begin{remark}
We note that this is also the reason to replace quasi-idempotents~\(
\check{e}_t \) by quasi-projections~\( e_t \), since the signature is
only defined for self-adjoint matrices.  
\end{remark}

Now we recall the definition of (odd) spectral localisers of Loring and
Schulz-Baldes. Let~\( \kappa>0 \), define the following unbounded,
self-adjoint operator
\[ 
    L_{\kappa}\defeq \begin{pmatrix}
    \kappa D & v \\ v^* & -\kappa D
    \end{pmatrix}
\]
and set~\( L_{\kappa,\lambda} \) to be the truncation of~\( L_\kappa \) to~\(
\mathcal{H}_{\lambda} \), which is a self-adjoint matrix of dimension~\(
\dim\mathcal{H}_{\lambda} \). 
Then it was proven in
\cites{Loring-SBaldes:Odd_spectral_localiser,Loring-SBaldes:Odd_spectral_localiser_sf}
that for sufficiently small~\( \kappa<0 \) and sufficiently large~\( \lambda>0
\), it follows that the index pairing~\( \braket{[v],[D]} \) equals one
half of the signature of~\( L_{\kappa,\lambda} \). The precise conditions for
the tuning paramter~\( \kappa>0 \) and the spectral threshold~\( \lambda>0 \)
can be found in
\cite{Loring-SBaldes:Odd_spectral_localiser_sf}*{Page 2}.

We shall prove that the invertible matrix~\(
\mathcal{L}^e_{t^{-1},\lambda}\defeq 2p^e_{t,\lambda}-\id \) 
is congruent to the spectral localiser~\( L_{t^{-1},\lambda} \).
Controls of the tuning parameter~\( \kappa=t^{-1} \) and the spectral
threshold~\( \lambda \) are given by \Cref{thm:index_pairing_submodule}
(for the general setup) and 
\Cref{thm:choice_cs} (for the specific choice of functions~\( c(x) \) and
\( s(x) \)).

\begin{theorem}\label{thm:spectral_localiser}
For every~\( t,\lambda>0 \), there is an equality
\[ 
    \frac{1}{2}\sig(L_{t^{-1},\lambda})=
    \rank(\kappa_0(p^e_{t,\lambda}))-\rank(\kappa_0(p^f_{t,\lambda})).
\]
As a consequence, let~\( \varepsilon,\delta>0 \) satisfy~\(
\varepsilon+\delta<\frac{1}{400} \). For any choice~\( t,\lambda \)
satisfying~\(
t>4\varepsilon^{-1}\norm*{[D,v]} \) and~\( \lambda>t\delta^{-1}
\), the half-signature 
\( \frac{1}{2}\sig(L_{t^{-1},\lambda}) \) coincides with the index
pairing~\( \braket{[v],[D]} \).   
\end{theorem}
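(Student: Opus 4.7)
The plan is to realise $L_{t^{-1},\lambda}$ and $\mathcal{L}^e_{t^{-1},\lambda} = 2p^e_{t,\lambda} - \id$ as congruent self-adjoint matrices on $\mathcal{H}_\lambda$, then read off the two statements via Sylvester's law of inertia and \Cref{lem:signature_quasi-projection}.

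For the explicit congruence, I would first exploit the particular choice of $c,s$ from \Cref{thm:choice_cs}. A direct computation gives $s(x)^2 - c(x)^2 = x(1+x^2)^{-1/2}$ and $c(x)s(x) = \tfrac{1}{2}(1+x^2)^{-1/2}$. Writing $F_t \defeq (1+t^{-2}D^2)^{-1/2}$, which is a positive invertible operator commuting with $D$ and with all spectral projections of $D$, this yields $s_t^2 - c_t^2 = t^{-1}D F_t$ and $\sqrt{c_t s_t} = \tfrac{1}{\sqrt{2}} F_t^{1/2}$. Using the formula \eqref{eq:e_t} for $e_t$ and truncating to $E^{\downarrow}_\lambda$ (noting that $F_t$ commutes with the spectral projection $P_\lambda^\downarrow$), I obtain on $\mathcal{H}_\lambda$
\[
\mathcal{L}^e_{t^{-1},\lambda} \;=\; \begin{pmatrix} t^{-1} D F_t & F_t^{1/2}\, v_\lambda\, F_t^{1/2} \\ F_t^{1/2}\, v_\lambda^*\, F_t^{1/2} & -t^{-1} D F_t \end{pmatrix},
\]
where $v_\lambda \defeq P_\lambda v P_\lambda$ and $D$, $F_t$ are implicitly truncated to the spectral subspace.

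Next I set $S \defeq F_t^{1/2} \oplus F_t^{1/2}$, which is positive and invertible on $\mathcal{H}_\lambda$. Since $D$ commutes with $F_t^{1/2}$, a direct block-matrix computation shows $S^* L_{t^{-1},\lambda} S = \mathcal{L}^e_{t^{-1},\lambda}$. By Sylvester's law of inertia, $\sig(L_{t^{-1},\lambda}) = \sig(\mathcal{L}^e_{t^{-1},\lambda})$. \Cref{lem:signature_quasi-projection} then gives $\sig(\mathcal{L}^e_{t^{-1},\lambda}) = 2\rank(\kappa_0(p^e_{t,\lambda})) - \dim\mathcal{H}_\lambda$. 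Since $p^f_{t,\lambda}$ is an honest projection of rank $\dim E_\lambda = \tfrac{1}{2}\dim\mathcal{H}_\lambda$, the constant $\dim\mathcal{H}_\lambda$ is exactly $2\rank(\kappa_0(p^f_{t,\lambda}))$, yielding the claimed rank-difference identity after dividing by two.

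For the second assertion, under the stated constraints $\varepsilon+\delta<\tfrac{1}{400}$, $t > 4\varepsilon^{-1}\norm{[D,v]}$, and $\lambda > t\delta^{-1}$, the choice of $R=2$ in \Cref{thm:choice_cs}(1) lets me apply \Cref{thm:index_pairing_quasi-projections}, and \Cref{thm:choice_cs}(2) supplies a spectral threshold satisfying the hypothesis of \Cref{thm:index_pairing_submodule}, whence $\langle [v],[D]\rangle = [p^e_{t,\lambda}] - [p^f_{t,\lambda}]$ in $\K_0(\Cpt)\simeq\Z$. Under the identification $\K_0(\Cpt)\simeq\Z$, the class $[p^e_{t,\lambda}] - [p^f_{t,\lambda}]$ equals $\rank(\kappa_0(p^e_{t,\lambda})) - \rank(\kappa_0(p^f_{t,\lambda}))$, which by the first part equals $\tfrac{1}{2}\sig(L_{t^{-1},\lambda})$.

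The main obstacle is the congruence step: one must verify that the positive invertible operator $S$ intertwines the spectral localiser with $\mathcal{L}^e_{t^{-1},\lambda}$ exactly, which hinges on the functional identities for the specific $c,s$ chosen in \Cref{thm:choice_cs} and on the fact that everything commutes with spectral projections of $D$. Once this algebraic miracle is in place, Sylvester's law and \Cref{lem:signature_quasi-projection} reduce everything to bookkeeping.
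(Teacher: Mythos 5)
Your proof is correct and follows essentially the same route as the paper: the same functional identities for $c,s$ from \Cref{thm:choice_cs}, the same congruence matrix $S = (1+t^{-2}D^2)^{-\sfrac{1}{4}}\oplus (1+t^{-2}D^2)^{-\sfrac{1}{4}}$ relating $L_{t^{-1},\lambda}$ and $2p^e_{t,\lambda}-\id$, Sylvester's law, \Cref{lem:signature_quasi-projection}, and the reduction to \Cref{thm:index_pairing_submodule,thm:choice_cs}. The only cosmetic difference is that the paper observes $\sig(2p^f_{t,\lambda}-\id)=0$ directly, whereas you note $\rank(p^f_{t,\lambda})=\tfrac{1}{2}\dim\mathcal{H}_\lambda$; these are of course the same computation.
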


\begin{proof}
It follows from \Cref{thm:index_pairing_submodule,thm:choice_cs} 
that under the conditions for~\( \varepsilon,\delta,t
\) and~\( \lambda \), the index pairing~\( \braket{[v],[D]} \) is
given by
\[
    \rank(\kappa_0(p^e_{t,\lambda}))-\rank(\kappa_0(p^f_{t,\lambda}))\in\Z\simeq\K_0(\Cpt).
\]
By the preceding lemma, we have
\begin{align*} 
\rank(\kappa_0(p^e_{t,\lambda}))-\rank(\kappa_0(p^f_{t,\lambda}))&=
\frac{1}{2}\left(\sig(2p^e_{t,\lambda}-\id)+\dim\mathcal{H}_{\lambda}\right)-
\frac{1}{2}\left(\sig(2p^f_{t,\lambda}-\id)+\dim\mathcal{H}_{\lambda}\right)\\
&=\frac{1}{2}\sig(2p^e_{t,\lambda}-\id)-\frac{1}{2}\sig(2p^f_{t,\lambda}-\id).
\end{align*}
The operator~\( p_{t,\lambda}^f \) is given by 
\(\begin{pmatrix}
    0 & 0 \\ 0 & \id
\end{pmatrix}\). Hence
\[
    \sig(2p_{t,\lambda}^f-\id)=\sig\begin{pmatrix}
    -\id & 0 \\ 0 & \id
    \end{pmatrix}=0.
\]

Now let
\[
    \mathcal{L}^e_{t^{-1},\lambda}\defeq
    2p^e_{t,\lambda}-\id.
\]
Then~\( \mathcal{L}^e_{t^{-1},\lambda} \) is
the truncation of the following operator onto~\( \mathcal{H}_{\lambda}
\):
\[ 
    2\begin{pmatrix}
    s_t^2 & \sqrt{c_ts_t}v\sqrt{c_ts_t} \\
    \sqrt{c_ts_t}v^*\sqrt{c_ts_t} & c_t^2
    \end{pmatrix}-\begin{pmatrix}
    \id & 0 \\ 0 & \id
    \end{pmatrix}=
    \begin{pmatrix}
    2s_t^2-\id & \sqrt{2c_ts_t}v\sqrt{2c_ts_t} \\
    \sqrt{2c_ts_t}v^*\sqrt{2c_ts_t} & 2c_t^2-\id
    \end{pmatrix}.
\]
Abbreviate~\( D_t\defeq t^{-1}D \). We compute that
\begin{alignat*}{3}
2s_t^2-\id&=\id+D_t(1+D_t^2)^{-\sfrac{1}{2}}-\id&&=D_t(1+D_t^2)^{-\sfrac{1}{2}};\\
2c_t^2-\id&=\id-D_t(1+D_t^2)^{-\sfrac{1}{2}}-\id&&=-D_t(1+D_t^2)^{-\sfrac{1}{2}};\\
2c_ts_t&=2\sqrt{\frac{1}{4}-\frac{1}{4}D_t^2(1+D_t^2)^{-1}}&&=(1+D_t^2)^{-\sfrac{1}{2}}.
\end{alignat*}
Thus  
\[
    \mathcal{L}^e_{t^{-1},\lambda}=\begin{pmatrix}
    D_t(1+D_t^2)^{-\sfrac{1}{2}} &
    (1+D_t^2)^{-\sfrac{1}{4}}v(1+D_t^2)^{-\sfrac{1}{4}} \\
    (1+D_t^2)^{-\sfrac{1}{4}}v^*(1+D_t^2)^{-\sfrac{1}{4}} &
    -D_t(1+D_t^2)^{-\sfrac{1}{2}} &
    \end{pmatrix}
\]
where the right-hand side is considered as an operator on~\(
\mathcal{H}_\lambda \) through truncation. The operator~\(
(1+D_t^2)^{-\sfrac{1}{4}} \) is a positive, invertible operator on~\(
\mathcal{H} \) which commutes with the spectral projections of~\( D \), 
thus its truncation onto~\( \mathcal{H}_\lambda \) is also
positive and invertible. So we have
\[ 
    \mathcal{L}^e_{t^{-1},\lambda}=\begin{pmatrix}
    (1+D_t^2)^{-\sfrac{1}{4}} & 0 \\ 0 & (1+D_t^2)^{-\sfrac{1}{4}} 
    \end{pmatrix}
    \begin{pmatrix}
    D_t & v \\
    v^* & -D_t
    \end{pmatrix}
    \begin{pmatrix}
    (1+D_t^2)^{-\sfrac{1}{4}} & 0 \\ 0 & (1+D_t^2)^{-\sfrac{1}{4}} 
    \end{pmatrix}.
\]
Thus~\( \sig(\mathcal{L}^e_{t^{-1},\lambda}) \)
coincides with the signature of the truncation of
\[ 
    \begin{pmatrix}
    D_t & v \\ v^* & -D_t
    \end{pmatrix}=\begin{pmatrix}
    t^{-1}D & v \\ v^* & -t^{-1}D
    \end{pmatrix}.
\]
Then we conclude that
\begin{multline*} 
    \braket{[v],[D]}=\rank(\kappa_0(p^e_{t,\lambda}))-\rank(\kappa_0(p^f_{t,\lambda}))=\frac{1}{2}\sig(2p^e_{t,\lambda}-1)-\frac{1}{2}\sig(2p^f_{t,\lambda}-1)\\
    =\frac{1}{2}\sig\mathcal{L}^e_{t^{-1},\lambda}=\frac{1}{2}\sig L_{t^{-1},\lambda}.\qedhere
\end{multline*}
\end{proof}

\subsection{Numerical index pairings and their truncations}
In this last section, we shall explain how a large class of \emph{numerical}
index pairing can be computed using the spectral truncation method of
\Cref{thm:index_pairing_submodule}. In particular, a \emph{semi-finite}
version of the spectral localiser 
defined by Schulz-Baldes and Stoiber
\cite{SBaldes-Stoiber:Semifinite_spectral_localizer}
arises in this way.

The setting that we shall be working in is as follows. Let~\( A \) and~\( B
\) be unital \Cst-algebras,~\( \tau \) a \emph{finite}, 
\emph{faithful} trace on~\( B \).  Then~\( \tau
\) extends to~\( \Mat_n(B) \) for each~\( n\in\N \) by (still denoted by~\(
\tau\):)
\[
    \tau\colon\Mat_{n}(B)\to\C,\quad (b_{ij})_{i,j=1}^{n}\mapsto\sum_{i=1}^n
    \tau(b_{ii}),
\]
and hence induces a group homomorphism
(cf.~\cite{Willett-Yu:Higher_index_theory}*{Remark 2.1.6}):
\[ 
    \tau_*\colon \K_0(B)\to\R.
\]

As before, we let~\( (\mathcal{A},E,D) \) be an odd unbounded Kasparov~\( A
\)-\( B \)--module and~\( v\in\mathcal{A} \) be a unitary. Let~\(
[v]\times[D]\in\K_0(B) \) be their Kasparov product as in
\Cref{eq:K-theoretic_index_pairing}. Their~\( \tau \)-(\emph{numerical}) 
\emph{index pairing} is defined as the real number
\[ 
    \tau_*([v]\times[D])\in\R.
\]

We wish to compute the~\( \tau \)-index pairing using the quasi-projection
representative~\( [e_t]-[f_t]\in\K_0(\Cpt_B(E)) \) as in
\Cref{thm:index_pairing_quasi-projections}. Next, we shall apply 
the spectral truncation method as in 
\Cref{thm:index_pairing_submodule} to replace~\(
[e_t]-[f_t] \) by their truncations onto a spectral submodule. The first
step is established by tranferring the finite, faithful trace~\(
\tau\colon B\to\C \) to a semi-finite trace on~\( \Cpt_B(E) \) and
then applying \Cref{thm:commutative_diagram_trace} below.

A finite trace~\( \tau\colon B\to\C \) is completely positive. Hence, it
yields, via 
the KSGNS construction (cf.~\cite{Lance:Hilbert_Cst-modules}*{Chapter 5}),
a Hilbert space
\[
    \mathcal{H}^\tau\defeq E\otimes_BL^2(B,\tau),
\]
obtained from the interior tensor product of the Hilbert~\( B \)-module~\(
E\) with the GNS-Hilbert space~\( L^2(B,\tau) \) constructed from~\(
\tau\colon B\to\C\). 

The Hilbert space~\( \mathcal{H}^\tau \) carries a representation
\[ 
    \varrho^\tau\colon \End_B^*(E)\to\Bdd(\mathcal{H}^\tau),\quad
    \varrho^\tau(T)(\xi\otimes b)\defeq T\xi\otimes b,
\]
which is isometric because~\( \tau \) is faithful. Thus
we may identify~\( \End_B^*(E) \), as well as its \Cst-subalgebra~\(
\Cpt_B(E) \), with their images under~\( \varrho^\tau \).  

Now we transfer the finite trace~\( \tau\colon B\to\C \) to a
\emph{semi-finite} trace~\( \widehat{{\tau}} \) on~\( \Cpt_B(E) \). The
following presentation comes from Laca and Neshveyev, whereas they 
also attributed a more general result to the work of 
Combes and Zettl 
\cite{Combes-Zettl:Order_structures}*{Proposition 2.1 and 2.2}, in which 
\( \tau \) is merely assumed to be densely defined. 

\begin{lemma}[\cite{Laca-Neshveyev:KMS_states_quasi-free}*{Theorem
1}]\label{lem:transfer_trace}
Let~\( B \) be a \Cst-algebra with a \emph{finite} trace~\( \tau\colon
B\to\C \). Let~\( \mathcal{E} \) be a Hilbert~\( B \)-module. 
Denote by~\( \ket{\xi_1}\bra{\xi_2} \) the rank-one
operator
\( \ket{\xi_1}\bra{\xi_2}\xi_3\defeq \xi_1\braket{\xi_2,\xi_3} \).
Define
\[ 
    \widehat{\tau}\colon\End^*_{B}(\mathcal{E})_+\to[0,+\infty],\quad 
    \widehat{\tau}(T)\defeq \sup_I\sum_{\xi\in I}\tau(\braket{\xi,T\xi}),
\]
where~\( I \) ranges over all finite subsets of~\( \mathcal{E} \) such that
\( \sum_{\xi\in I}\ket{\xi}\bra{\xi}\leq \id_\mathcal{E} \). 
Extend~\( \widehat{\tau} \) by linearality. Then the following 
holds\textup{:}
\begin{enumerate}
\item \( \widehat{\tau} \) is a semi-finite trace on~\(
\End_{B}^*(\mathcal{E}) \). 
\item 
\( \widehat{\tau}(\ket{\xi_1}\bra{\xi_2})=\tau(\braket{\xi_2,\xi_1}) \) 
and~\( \widehat{\tau}(\ket{\xi_1}\bra{\xi_2})<+\infty \). 
As a consequence, every
element in~\( \Fin_B^*(\mathcal{E}) \) is~\( \widehat{\tau} \)-finite.  
\end{enumerate}
\end{lemma}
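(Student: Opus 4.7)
The plan is to first establish the rank-one formula of part (2), which is the computational core, and then derive the structural properties of part (1) by approximation arguments.

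For part (2), the key identity is $\widehat{\tau}(\ket{\xi}\bra{\xi}) = \tau(\braket{\xi,\xi})$ for $\xi\in\mathcal{E}$ with $\ket{\xi}\bra{\xi}\leq\id_\mathcal{E}$. The lower bound is immediate by taking $I = \{\xi\}$. For the matching upper bound, let $I$ be any admissible family; using $\ket{\xi}\bra{\xi}\eta = \xi\braket{\xi,\eta}$ and the trace property of $\tau$, we obtain
\[
\sum_{\eta\in I}\tau\bigl(\braket{\eta,\ket{\xi}\bra{\xi}\eta}\bigr)
=\sum_{\eta\in I}\tau\bigl(\braket{\xi,\eta}\braket{\eta,\xi}\bigr)
=\tau\bigl(\braket{\xi,\textstyle\sum_{\eta\in I}\ket{\eta}\bra{\eta}\xi}\bigr)
\leq\tau(\braket{\xi,\xi}),
\]
where the last step uses $\sum_{\eta\in I}\ket{\eta}\bra{\eta}\leq\id_\mathcal{E}$ together with positivity of $\tau$. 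A rescaling removes the constraint on $\xi$, and polarization extends the formula linearly to $\widehat{\tau}(\ket{\xi_1}\bra{\xi_2}) = \tau(\braket{\xi_2,\xi_1})$. In particular, every element of $\Fin^*_B(\mathcal{E})$ is $\widehat{\tau}$-finite.

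For part (1), I would verify additivity, positive homogeneity, the trace property, and semi-finiteness in turn. Positive homogeneity is immediate. Additivity on $\End^*_B(\mathcal{E})_+$ requires combining near-optimal admissible families for $S$ and $T$ into a single family approximating $\widehat{\tau}(S+T)$ from below; one inequality is trivial, and the other is the standard device used in weight constructions of this form. The trace property reduces by approximation to the case in which one factor is rank-one: for $T\in\End^*_B(\mathcal{E})$ and $R = \ket{\xi_1}\bra{\xi_2}$ one has $TR = \ket{T\xi_1}\bra{\xi_2}$ and $RT = \ket{\xi_1}\bra{T^*\xi_2}$, so the rank-one formula combined with the adjoint identity $\braket{T^*\xi_2,\xi_1} = \braket{\xi_2,T\xi_1}$ yields $\widehat{\tau}(TR) = \widehat{\tau}(RT)$. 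Semi-finiteness is then immediate, because $\Fin^*_B(\mathcal{E}) \subseteq \Cpt_B(\mathcal{E})$ is a norm-dense $\widehat{\tau}$-finite ideal.

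The main technical obstacle is the extension step from finite-rank to arbitrary positive elements, both for additivity and for the trace property. This demands a careful monotone-convergence argument showing that the supremum in the definition of $\widehat{\tau}$ is attained cofinally along families whose rank-one sums form an approximate unit in $\Cpt_B(\mathcal{E})$, together with lower semicontinuity of $\widehat{\tau}$. Establishing these properties in full generality is where the proofs of Laca--Neshveyev and Combes--Zettl invest most of their technical effort.
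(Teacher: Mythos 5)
The paper does not prove this lemma; it is taken verbatim from Laca--Neshveyev (Theorem~1), and the more general Combes--Zettl result is also cited, so there is no internal proof to compare against. Evaluating your sketch on its own merits, there is a concrete error at the heart of part~(2): the lower bound $\widehat{\tau}(\ket{\xi}\bra{\xi})\geq\tau(\braket{\xi,\xi})$ is \emph{not} obtained by taking $I=\{\xi\}$. Writing $c\defeq\braket{\xi,\xi}$, the choice $I=\{\xi\}$ yields
\[
\tau\bigl(\braket{\xi,\ket{\xi}\bra{\xi}\xi}\bigr)=\tau(c\cdot c)=\tau(c^{2}),
\]
which is strictly less than $\tau(c)$ whenever $c$ is not a projection, since $c^{2}\leq c$ for a positive contraction and $\tau$ is positive. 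The actual lower bound requires a regularisation: put $\eta_{\varepsilon}\defeq\xi(c+\varepsilon)^{-\sfrac{1}{2}}$, note that $\braket{\eta_{\varepsilon},\eta_{\varepsilon}}=c(c+\varepsilon)^{-1}\leq 1$ so $\{\eta_{\varepsilon}\}$ is admissible, and compute $\tau\bigl(\braket{\eta_{\varepsilon},\ket{\xi}\bra{\xi}\eta_{\varepsilon}}\bigr)=\tau\bigl(c^{2}(c+\varepsilon)^{-1}\bigr)\to\tau(c)$ as $\varepsilon\to 0^{+}$, using norm-continuity of the finite trace. Without this step you only get $\widehat{\tau}(\ket{\xi}\bra{\xi})\geq\tau(c^2)$, which is not what is claimed.

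Your upper bound computation for part~(2) is correct, and the remainder of your outline --- polarisation, scaling, additivity, trace property, semi-finiteness --- identifies the right structural issues, including the monotone-convergence and lower-semicontinuity points where Laca--Neshveyev and Combes--Zettl invest their technical effort. You are right to flag those rather than pretend they are immediate, but the lower-bound step you called ``immediate'' is the one that must be fixed.
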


The semi-finite trace~\( \widehat{\tau} \) on~\( \End_B^*(E) \) extends to
the enveloping von Neumann algebra~\(
\mathcal{N} \) of~\( \varrho^\tau(\End_B^*(E)) \).
Denote by~\( \Fin_{\widehat{\tau}} \) the \st-subalgebra in~\( \mathcal{N}
\) generated by all~\( \tau \)-finite projections, 
and~\( \Cpt_{\widehat{\tau}} \) be the
norm-closed ideal generated by~\( \Fin_{\widehat{\tau}} \).    
It is shown in \cite{Kaad-Nest-Rennie:KK-theory_spectral_flow}*{Lemma
6.3} that~\( \Fin_{\widehat{\tau}} \) is
holomorphically closed in~\( \Cpt_{\widehat{\tau}} \). This gives a
well-defined group homomorphism 
(cf.~\cite{Kaad-Nest-Rennie:KK-theory_spectral_flow}*{Theorem 6.4}):
\[ 
    \widehat{\tau}_*\colon \K_0(\Cpt_{\widehat{\tau}})\to\R,\quad
    \widehat{\tau}_*([p]-[q])\defeq 
    \widehat{\tau}\big(p-s(p)\big)-
    \widehat{\tau}\big(q-s(q)\big),
\]
where~\( p,q\in\Mat_n(\Fin_{\widehat{\tau}}^+) \) are projections
such that~\( [s(p)]=[s(q)] \) in~\( \K_0(\C) \), and~\( s\colon
\Mat_n(\Fin_{\widehat{\tau}}^+)\twoheadrightarrow\Mat_n(\C) \)
is the projection onto the scalar matrix part.

\begin{theorem}\label{thm:commutative_diagram_trace}
The \st-homomorphism~\( \varrho^{\tau}\colon
\Cpt_B(E)\to\Bdd(\mathcal{H}^\tau) \)
maps~\( \Cpt_B(E) \) into~\( \Cpt_{\widehat{\tau}} \). Its induced map
\[ 
    \varrho^{\tau}_*\colon \K_0(\Cpt_B(E))\to\K_0(\Cpt_{\widehat{\tau}})
\]
makes the the following diagram commute:
\[ \begin{tikzcd}[row sep=large, column sep=large]
\K_1(A) \arrow[r, "\times{[D]}"] \arrow[rd, "\Phi^D_*"'] & \K_0(B)
\arrow[r, "\tau_*"] & \R \\
& \K_0(\Cpt_B(E))  \arrow[u, "{[E]}"]  \arrow[r,"\varrho^{\tau}_*"] &
\K_0(\Cpt_{\widehat{\tau}}) \arrow[u, "\widehat{\tau}_*"'].
\end{tikzcd} \]
\end{theorem}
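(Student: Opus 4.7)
The statement has three ingredients: the containment $\varrho^\tau(\Cpt_B(E)) \subseteq \Cpt_{\widehat\tau}$, the commutativity of the triangle $\times[D] = [E] \circ \Phi^D_*$, and the commutativity of the square $\tau_* \circ [E] = \widehat\tau_* \circ \varrho^\tau_*$. The plan is to dispatch the first two by invoking earlier results and to spend the real effort on the third.

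For the containment, $\Cpt_B(E)$ is the norm-closure of the span of rank-one operators $\ket{\xi_1}\bra{\xi_2}$, and $\Cpt_{\widehat\tau}$ is by definition norm-closed. By polarisation it suffices to place each positive element $\varrho^\tau(\ket{\xi}\bra{\xi})$ in $\Cpt_{\widehat\tau}$. By \Cref{lem:transfer_trace}(2), $\widehat\tau(\varrho^\tau(\ket{\xi}\bra{\xi})) = \tau(\braket{\xi,\xi}) < \infty$, so every spectral projection $\chi_{[\epsilon,\infty)}$ of this bounded positive operator is $\widehat\tau$-finite and lies in $\Fin_{\widehat\tau}$; continuous functional calculus on the operator then expresses it as a norm-limit of such $\widehat\tau$-finite truncations, placing it in $\Cpt_{\widehat\tau}$. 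The triangle commutes by combining the factorisation \eqref{eq:K-theoretic_index_pairing} with the earlier Proposition that identifies $\Phi^D_*$ with the boundary map of \eqref{eq:Kasparov_KK_to_extension} under the suspension isomorphism $\K_1(A) \simeq \K_0(\mathrm{S}A)$.

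The bulk of the argument is the square. Both $\tau_* \circ [E]$ and $\widehat\tau_* \circ \varrho^\tau_*$ are group homomorphisms $\K_0(\Cpt_B(E)) \to \R$, so it suffices to verify equality on a generating family. Convenient generators are the classes $[\ket{\xi}\bra{\xi}]$ with $\xi \in E$ such that $\braket{\xi,\xi} \in B$ is a projection: then $\ket{\xi}\bra{\xi}$ is a genuine projection in $\Cpt_B(E)$, the Morita correspondence gives $[E]([\ket{\xi}\bra{\xi}]) = [\braket{\xi,\xi}]$ in $\K_0(B)$, and both composites evaluate to $\tau(\braket{\xi,\xi})$ by \Cref{lem:transfer_trace}(2). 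A general class in $\K_0(\Cpt_B(E))$ is represented by a projection $p \in \Mat_n(\Cpt_B(E)^+)$ with scalar part $s(p)$, and after stabilisation can be reduced by Murray--von Neumann equivalence to a direct sum of such rank-one projections; the scalar subtraction built into $\widehat\tau_*$ matches exactly the trivial summand appearing on the $\K_0(B)$-side of the Morita correspondence, so both sides of the square agree. The main obstacle will be this final reduction with its unitisation bookkeeping; if it proves cumbersome I would instead invoke the abstract Morita-invariance of traces (in the spirit of \cite{Combes-Zettl:Order_structures}) to identify $\widehat\tau$ on $\Cpt_B(E)$ directly as the pullback of $\tau$ under the correspondence $E$, which settles the square at the level of $\K_0$.
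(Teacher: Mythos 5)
Your containment argument is correct but takes a genuinely different route from the paper's. You reduce to positive rank-ones, observe that $\varrho^\tau(\ket{\xi}\bra{\xi})$ has finite $\widehat{\tau}$-trace, so each spectral truncation $\chi_{[\epsilon,\infty)}$ is a $\widehat{\tau}$-finite projection in $\mathcal{N}$, and then approximate in norm using that $\Cpt_{\widehat\tau}$ is a norm-closed ideal. The paper instead proves $f(D)\in\Fin_B^*(E)$ for $f\in\Cc(\R)$ via the Pedersen ideal (\Cref{lem:compactly_supported_fin_rank}), deduces $(D^\tau+\mathrm{i})^{-1}\in\Cpt_{\widehat\tau}$ (\Cref{lem:resolvent_tau_compact}), and uses that $(D+\mathrm{i})^{-1}$ generates $\Cpt_B(E)$ as an ideal (\Cref{prop:injective_K_tau}). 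Your route is shorter and does not touch $D$ at all; the paper's longer route is paid off by the reuse of \Cref{lem:resolvent_tau_compact} in \Cref{thm:semi-finite_index_pairing_subspace}. The triangle is handled identically in both (assembled from \Cref{thm:Phi^D} and \eqref{eq:K-theoretic_index_pairing}).

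The square, however, has a genuine gap. The proposed generating family — classes $[\ket{\xi}\bra{\xi}]$ with $\braket{\xi,\xi}\in B$ a projection — does \emph{not} generate $\K_0(\Cpt_B(E))$, and the intended Murray--von Neumann reduction of an arbitrary projection to a direct sum of such rank-ones does not exist. Take $B=\Cont(\S^2)$ and $E=\mathcal{H}_B$; the Bott projection $p\in\Mat_2(B)\subset\Cpt_B(\mathcal{H}_B)$ satisfies $[E]([p])=[\text{Bott class}]$, whereas any finite direct sum of rank-ones $\ket{\xi_i}\bra{\xi_i}$ with $\braket{\xi_i,\xi_i}\in\Cont(\S^2)$ a projection (hence $0$ or $1$ by connectedness) maps under $[E]$ to an integer multiple of $[1]$. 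Stabilisation does not help, because the constraint $\braket{\xi,\xi}\in B$ persists after amplification. Your fallback — invoking Morita invariance of traces — is the right idea and is essentially what the paper verifies, but it cannot be a free invocation: the theorem does not assume $E$ full, so $E$ need not be an imprimitivity bimodule. The paper sidesteps this by applying Kasparov's stabilisation theorem to produce an isometry $V\colon E\hookrightarrow\mathcal{H}_B$, inserting a middle row into the diagram, reducing the relevant square to $\Cpt_B(\mathcal{H}_B)\simeq\Cpt\otimes B$, and then checking the trace identity explicitly on projections in $\Mat_n(B)$ using the formula of \Cref{lem:transfer_trace}. That concrete verification is precisely the content your sketch leaves unsupplied.
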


We provide a detailed proof in \Cref{app:proof_commutativity}.
It follows immediately from the preceding theorem that:

\begin{corollary}
The~\( \tau \)-index pairing~\( \tau_*([v]\times[D])\in\R \) is
represented by
\[ 
    \widehat{\tau}_*\circ\varrho^\tau_*([e_t]-[f_t])=
    \widehat{\tau}_*([\varrho^\tau(e_t)]-[\varrho^\tau(f_t)]),
\]
where
\[ 
    e_t=\begin{pmatrix}
    s_t^2 & \sqrt{c_ts_t} v\sqrt{c_ts_t} \\
    \sqrt{c_ts_t} v^*\sqrt{c_ts_t} &  c_t^2
    \end{pmatrix},\quad f_t=\begin{pmatrix}
    0 & 0 \\ 0 & \id
    \end{pmatrix}
\]
are \textup{(}quasi-\textup{)}projections in~\( \Mat_2(\Cpt_B(E)) \), and
\( \varrho^\tau(e_t),\varrho^\tau(f_t)\in\Mat_2(\Cpt_{\widehat{\tau}^+}) \)
are their images under the injective \st-homomorphism~\(
\varrho^\tau\otimes\mathbf{1}_{2\times 2}\colon
\Mat_2(\Cpt_B(E)^+)\to\Mat_2(\Cpt_{\widehat{\tau}}^+) \). 
\end{corollary}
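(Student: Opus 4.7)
The plan is to chain together the two main preceding results: the quasi-projection representative of the K-theoretic index pairing (\Cref{thm:index_pairing_quasi-projections}) and the commutative diagram relating \( \tau_* \) to \( \widehat{\tau}_* \) via \( \varrho^\tau_* \) (\Cref{thm:commutative_diagram_trace}).

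First, by \Cref{thm:index_pairing_quasi-projections}, for any sufficiently large \( t \) the K-theoretic index pairing \( \braket{[v],[D]}\in\K_0(\Cpt_B(E)) \), which is the image of \( [v]\in\K_1(A) \) under the boundary map \( \partial\colon\K_1(A)\to\K_0(\Cpt_B(E)) \) of \eqref{eq:K-theoretic_index_pairing}, is represented by the formal difference of quasi-projections \( [e_t]-[f_t] \). Since \( \Phi^D_* \) agrees with \( \partial \) under the suspension isomorphism, this is equivalently the image of \( [v] \) under \( \Phi^D_* \) composed with the suspension isomorphism.

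Second, the numerical pairing \( \tau_*([v]\times[D]) \) is by definition the composition along the top row of the diagram in \Cref{thm:commutative_diagram_trace}, i.e.\ \( \tau_*\circ[E]\circ\partial \) applied to \( [v] \). The commutativity of that diagram gives the identity
\[
\tau_*\circ[E]=\widehat{\tau}_*\circ\varrho^\tau_*
\colon\K_0(\Cpt_B(E))\to\R.
\]
Evaluating both sides on \( \braket{[v],[D]}=[e_t]-[f_t] \) and using that the \st-homomorphism \( \varrho^\tau\otimes\mathbf{1}_{2\times2} \) is natural on \( \K_0 \) (in particular sends quasi-projections to quasi-projections and respects formal differences) yields
\[
\tau_*([v]\times[D])
=\widehat{\tau}_*\circ\varrho^\tau_*\big([e_t]-[f_t]\big)
=\widehat{\tau}_*\big([\varrho^\tau(e_t)]-[\varrho^\tau(f_t)]\big),
\]
which is the claim.

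The only nontrivial point, which is already handled by the cited results, is verifying that the quasi-projection picture is compatible with the \st-homomorphism \( \varrho^\tau \): since \( \varrho^\tau \) is an injective \st-homomorphism, it preserves the norm estimate \( \norm{x^2-x}<1/4 \), so it sends \( e_t \) (a quasi-projection in \( \Mat_2(\Cpt_B(E)^+) \)) to a quasi-projection in \( \Mat_2(\Cpt_{\widehat{\tau}}^+) \), and the class \( [\varrho^\tau(e_t)] \) is defined via holomorphic functional calculus exactly as in \Cref{prop:K-theory_quasi-idempotents}. Hence no new analytic work is required beyond what \Cref{thm:index_pairing_quasi-projections,thm:commutative_diagram_trace} already provide, and the corollary is immediate.
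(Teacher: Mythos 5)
Your proof is correct and matches the paper's (implicit) argument: the paper simply states the corollary as an immediate consequence of \Cref{thm:commutative_diagram_trace}, and you have spelled out exactly the two ingredients needed — the quasi-projection representative from \Cref{thm:index_pairing_quasi-projections} and the identity \( \tau_*\circ[E]=\widehat{\tau}_*\circ\varrho^\tau_* \) from the commutative diagram. The remark that the injective \st-homomorphism \( \varrho^\tau \) carries quasi-projections to quasi-projections and is compatible with the holomorphic functional calculus defining their K-theory classes is the right observation to make the chain of identifications watertight.
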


Now we apply spectral truncation to the quasi-projection
representative
\[
    [\varrho^\tau(e_t)]-[\varrho^\tau(f_t)]\in\K_0(\Cpt_{\widehat{\tau}}).
\]
where both~\( \varrho^\tau(e_t) \) and~\( \varrho^\tau(f_t) \) are
represented faithfully on the Hilbert
\emph{space}~\( \widehat{\mathcal{H}}^\tau\defeq \mathcal{H}^\tau\oplus
\mathcal{H}^\tau \). Hence, as explained in
\Cref{ex:spectral_decomposition_Hilbert_space}, 
a spectral decomposition may be
constructed from the spectral projections of an unbounded, self-adjoint
operator thereon.

In the case of interest, we consider spectral projections of the
unbounded, self-adjoint operator
\( D^\tau\oplus D^\tau \) obtained from the unbounded, self-adjoint and
regular operator~\( D \) on~\( E \), localised at the representation~\(
\End_B^*(E)\to\Bdd(\mathcal{H}^\tau) \). That is, let~\( D\otimes\id \)
denote the interior tensor product operator  
\[ 
    D\otimes\id\colon
    \dom D\otimes^{\mathrm{alg}}_BL^2(E,\tau)\subseteq
    \mathcal{H}^\tau\to\mathcal{H}^\tau,\quad
    (D\otimes\id)(\xi\otimes b)\defeq D\xi\otimes b,
\]
defined on the algebraic tensor product~\(
\dom D\otimes^{\mathrm{alg}}_BL^2(E,\tau) \). Then its closure, denoted by
\( D^\tau \), is a self-adjoint operator on~\( \mathcal{H}^\tau \) 
(cf.~\cite{Lance:Hilbert_Cst-modules}*{Chapter 10}).

Let~\( \lambda>0 \), we set
\begin{gather*} 
\mathcal{H}^{\tau}_{\lambda}\defeq
\chi_{[-\lambda,\lambda]}(D^\tau\oplus D^\tau)\widehat{\mathcal{H}}^\tau,\\
\mathcal{H}^{\tau}_{\lambda^\mathrm{c}}\defeq
\chi_{(-\infty,-\lambda)\cup(\lambda,\infty)}(D^\tau\oplus
D^\tau)\widehat{\mathcal{H}}^\tau.
\end{gather*}
Then~\( (\mathcal{H}_\lambda^\tau,\mathcal{H}_{\lambda^\mathrm{c}}^\tau) \)
is a spectral decomposition for the unbounded, self-adjoint operator~\(
D^\tau\oplus D^\tau \) with spectral threshold~\( \lambda \).
Similar as in \eqref{eq:spectral_decomposition_et_ft}, we write
\begin{equation}\label{eq:spectral_decomposition_et_ft_semifinite}
    \varrho^{\tau}(e_t)=\begin{pmatrix} 
    p^e_{t,\lambda} & {m^e_{t,\lambda}}^* \\
    m^e_{t,\lambda} & q^e_{t,\lambda} 
    \end{pmatrix},\quad
    \varrho^{\tau}(f_t)=\begin{pmatrix} 
    p^f_{t,\lambda} & {m^f_{t,\lambda}}^* \\
    m^f_{t,\lambda} & q^f_{t,\lambda} 
    \end{pmatrix},\quad
    \varrho^{\tau}(\Theta_t)=\begin{pmatrix} 
    p^\Theta_{t,\lambda} & {m^\Theta_{t,\lambda}}^* \\
    m^\Theta_{t,\lambda} & q^\Theta_{t,\lambda} 
    \end{pmatrix},
\end{equation}
where
\begin{align*}
p^e_{t,\lambda},p^f_{t,\lambda},p^{\Theta}_{t,\lambda}&\colon
\mathcal{H}^{\tau}_{\lambda}\to\mathcal{H}^\tau_{\lambda}, \\
m^e_{t,\lambda},m^f_{t,\lambda},m^\Theta_{t,\lambda}&\colon
\mathcal{H}^\tau_{\lambda}\to\mathcal{H}^\tau_{\lambda^\mathrm{c}}, \\
q^e_{t,\lambda},q^f_{t,\lambda},q^\Theta_{t,\lambda}&\colon
\mathcal{H}^\tau_{\lambda^\mathrm{c}}\to\mathcal{H}^\tau_{\lambda^\mathrm{c}}.
\end{align*}

Replacing~\( [\varrho^{\tau}(e_t)]-[\varrho^{\tau}(f_t)] \) by their
spectral truncations~\( [p_{t,\lambda}^e]-[p_{t,\lambda}^f] \) requires an
analogue of \Cref{lem:reduction_to_diagonal}, which allows for reducing
\( \varrho^{\tau}(e_t) \) to its spectral diagonal. In order to do this,
one key step is showing that~\( \chi_K(D^\tau) \) is
\( \widehat{\tau} \)-finite for every compact set~\( K \). 
We prove this in \Cref{lem:resolvent_tau_compact}, based on the remarkable
\Cref{lem:compactly_supported_fin_rank}, which states that 
\( f(D)\in\Fin_B^*(E) \) for all~\( f\in\Cc(\R) \).   

\begin{theorem}\label{thm:semi-finite_index_pairing_subspace}
Let~\( A \) and~\( B \)  be unital \Cst-algebras,~\( \tau \) be
finite faithful trace on~\( B \). Let~\( (\mathcal{A},E,D) \) be an
odd unbounded Kasparov~\( A \)-\( B \)--module and
\( v\in \mathcal{A} \) be a unitary. Write~\( [v]\in\K_1(A) \) and~\(
[D]\in\KK_1(A,B) \) for their representing classes. Then\textup{:}
\begin{enumerate}
\item For every~\( 0<\delta<\frac{1}{400} \), 
there exists~\( \lambda>0 \), such that under the spectral
decomposition~\(
(\mathcal{H}_\lambda^{\tau},\mathcal{H}_{\lambda^\mathrm{c}}^{\tau}) \), it
follows that
\( \norm{q_{t,\lambda}^e-q_{t,\lambda}^\Theta
q_{t,\lambda}^fq_{t,\lambda}^{\Theta}{}^*}\leq\delta \)\textup{;}
\item Assume that there exists~\( R>0 \) such that 
\eqref{eq:assmption_constants_R}
holds. 
Then for any~\( 0<\varepsilon<\frac{1}{400}-\delta \) and any~\(
t\in[1,\infty) \) that satisfy  
\[  
    t>2\varepsilon^{-1}R\norm*{[D,v]},
\]
then
\[ 
[\varrho^\tau(e_t)]-[\varrho^\tau(f_t)]=[p^e_{t,\lambda}]-[p^f_{t,\lambda}]\in\K_0(\Cpt_{\widehat{\tau}}).
\]
\end{enumerate}
\end{theorem}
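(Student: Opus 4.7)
The plan is to transcribe the argument of \Cref{thm:index_pairing_submodule} into the semifinite setting, replacing the abstract spectral decomposition of the Hilbert $B$-module $\widehat{E}$ by the Borel-functional-calculus decomposition of the Hilbert space $\widehat{\mathcal{H}}^\tau$ induced by $D^\tau \oplus D^\tau$, and verifying at each step that every operator in sight lies in $\Cpt_{\widehat{\tau}}$.

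For part~(1), since $\widehat{\mathcal{H}}^\tau$ is a Hilbert space and $D^\tau \oplus D^\tau$ is self-adjoint, \Cref{ex:spectral_decomposition_Hilbert_space} guarantees that $(\mathcal{H}^\tau_\lambda, \mathcal{H}^\tau_{\lambda^{\mathrm{c}}})$ is a spectral decomposition for every $\lambda>0$. I then repeat the computation of \Cref{lem:spectral_decomposition} verbatim on $\widehat{\mathcal{H}}^\tau$: the difference $\varrho^\tau(e_t) - \varrho^\tau(\Theta_t f_t \Theta_t^*)$ is controlled by $2\norm{\varrho^\tau(c_t s_t)}$ on the upper spectral subspace, and since $c(x)s(x)\to 0$ at $\pm\infty$, I choose $\lambda$ so that $\abs{c(x)s(x)} < \delta/2$ whenever $\abs{x} > t^{-1}\lambda$. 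Borel functional calculus applied to $D^\tau \oplus D^\tau$ then yields the required operator inequality on $\mathcal{H}^\tau_{\lambda^{\mathrm{c}}}$, and truncating gives the norm bound on $q^e_{t,\lambda} - q^\Theta_{t,\lambda} q^f_{t,\lambda} q^\Theta_{t,\lambda}{}^*$.

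For part~(2), the scheme of \Cref{thm:index_pairing_submodule} runs through unchanged. Since $\varrho^\tau(\Theta_t)$ is unitary and commutes with $D^\tau \oplus D^\tau$, the block $q^\Theta_{t,\lambda}$ is a unitary on $\mathcal{H}^\tau_{\lambda^{\mathrm{c}}}$, so $q^\Theta_{t,\lambda} q^f_{t,\lambda} q^\Theta_{t,\lambda}{}^*$ is a genuine projection. Combined with the bound from part~(1), \Cref{thm:close_quasi-idempotents_homotopic}(2) shows that $q^e_{t,\lambda}$ is a $4\delta$-quasi-projection homotopic to it, so $[q^e_{t,\lambda}] = [q^f_{t,\lambda}]$. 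Applying \Cref{lem:reduction_to_diagonal} to $\varrho^\tau(e_t)$ under the decomposition $\widehat{\mathcal{H}}^\tau = \mathcal{H}^\tau_\lambda \oplus \mathcal{H}^\tau_{\lambda^{\mathrm{c}}}$ yields a straightline homotopy of quasi-projections to $p^e_{t,\lambda} \oplus q^e_{t,\lambda}$, and since $\varrho^\tau(f_t)$ is already diagonal, the telescoping identity $[\varrho^\tau(e_t)] - [\varrho^\tau(f_t)] = [p^e_{t,\lambda}] - [p^f_{t,\lambda}]$ follows.

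The main obstacle is to ensure that every one of these classes actually lives in $\K_0(\Cpt_{\widehat{\tau}})$. \Cref{thm:commutative_diagram_trace} already places $\varrho^\tau(e_t)$ and $\varrho^\tau(f_t)$ in $\Mat_2(\Cpt_{\widehat{\tau}}^+)$. By \Cref{lem:resolvent_tau_compact} the spectral cutoff $\chi_{[-\lambda,\lambda]}(D^\tau)$ is $\widehat{\tau}$-finite, hence lies in $\Fin_{\widehat{\tau}} \subseteq \Cpt_{\widehat{\tau}}$; consequently each block $p^\bullet_{t,\lambda}, q^\bullet_{t,\lambda}, m^\bullet_{t,\lambda}$ obtained by sandwiching these elements remains in the appropriate corner of $\Cpt_{\widehat{\tau}}$, and the off-diagonal entry eliminated by the straightline reduction to the diagonal is compact, so the homotopy stays inside $\Mat_2(\Cpt_{\widehat{\tau}}^+)$ and the K-theoretic cancellation is justified.
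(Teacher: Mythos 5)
Your proposal is correct and takes essentially the same approach as the paper: for part~(1) you transcribe the computation of \Cref{lem:spectral_decomposition} to the Hilbert space $\widehat{\mathcal{H}}^\tau$, and for part~(2) you invoke the argument of \Cref{thm:index_pairing_submodule} and observe, exactly as the paper does, that the extra work consists in verifying that the straightline homotopy stays inside $\Mat_2(\Cpt_{\widehat{\tau}}^+)$, which follows from \Cref{lem:resolvent_tau_compact} because $m^e_{t,\lambda}=m^e_{t,\lambda}P^\tau_\lambda$ and $P^\tau_\lambda$ is a $\widehat{\tau}$-finite projection in the ideal $\Cpt_{\widehat{\tau}}$. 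One minor imprecision: you write that each block ``$p^\bullet_{t,\lambda}, q^\bullet_{t,\lambda}, m^\bullet_{t,\lambda}$ \dots remains in the appropriate corner of $\Cpt_{\widehat{\tau}}$,'' but $q^\bullet_{t,\lambda}$ is a compression by $1-P^\tau_\lambda\notin\Cpt_{\widehat{\tau}}$, so it lies only in the unitisation $\Cpt_{\widehat{\tau}}^+$; the cancellation goes through because only the off-diagonal block $m^e_{t,\lambda}$ needs to lie in the ideal, which is precisely what your final sentence (and the paper) relies on.
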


\begin{proof}
The proof follows in a similar way as
\Cref{thm:index_pairing_submodule}, except that we
must show that the homotopy
\[ 
    [0,1]\to\Bdd(\widehat{\mathcal{H}}^\tau),\quad
    s\mapsto\begin{pmatrix}
    p^e_{t,\lambda} & s\cdot m^e_{t,\lambda}{}^* \\ 
    s\cdot m^e_{t,\lambda} & q^f_{t,\lambda}
    \end{pmatrix}
\]
lies inside~\( \Mat_2(\Cpt_{\widehat{\tau}}^+) \). To this end, write~\(
P_\lambda^{\tau}\colon
\widehat{\mathcal{H}}^\tau\to\mathcal{H}_{\lambda}^\tau \) for the
orthogonal projection onto~\( \mathcal{H}_\lambda^\tau \),
then~\( P_\lambda^\tau=\chi_{[-\lambda,\lambda]}(D^\tau\oplus D^\tau) \).
Since~\(
[-\lambda,\lambda] \) is a compact subset of~\( \R \), it follows from
\Cref{lem:resolvent_tau_compact} that~\( P_\lambda^{\tau} \) is a~\(
\widehat{\tau} \)-finite projection.  Since~\(
\Mat_2(\Cpt_{\widehat{\tau}})\subseteq \Mat_2(\mathcal{N}) \) is an ideal,
we conclude that~\( m^e_{t,\lambda}=m^e_{t,\lambda}P_\lambda^\tau \)
belongs to~\( \Mat_2(\Cpt_{\widehat{\tau}}) \) as well. 
\end{proof}

Similar as in the spectral triple case of
\Cref{sec:spectral_localiser_spectral_triple}, 
define the following unbounded,
self-adjoint operator on~\( \mathcal{H}^\tau \): 
\[ 
    L^\tau_\kappa\defeq \begin{pmatrix}
    \kappa D^\tau & \varrho^\tau(v) \\
    \varrho^\tau(v^*) & -\kappa D^\tau
    \end{pmatrix}
\]
and let~\( L^\tau_{\kappa,\lambda} \) be its truncation onto~\(
\mathcal{H}^\tau_{\lambda} \). We shall show that the~\(
\tau \)-index pairing~\( \tau_*\braket{[v],[D]} \) can be obtained as the
half--\( \tau \)-signature (defined below) of~\( L^\tau_{\kappa,\lambda} \)
for sufficiently small tuning parameter~\( \kappa \) and sufficiently large
spectral threshold~\( \lambda \).  

\begin{definition}\label{def:tau_rank_sig}
Let~\( A \) be a \Cst-algebra, and~\( \tau \) be a densely defined trace on
\( A \).
\begin{enumerate}
\item Let~\( p\in A \) be a quasi-projection such that~\( \kappa_0(p) \) is
\( \tau \)-finite. Then the~\( \tau \)-rank of~\( p \) is defined as
\[ 
    \rank_\tau(p)\defeq \tau(\kappa_0(p)).
\]
\item Let~\( L\in A \) be an invertible element such that~\(
\chi_{(0,+\infty)}(L) \) and~\( \chi_{(-\infty,0)}(L) \) are both~\( \tau
\)-finite. Then the~\( \tau \)-signature of~\( L \) is defined as
\[ 
    \sig_\tau(L)\defeq
    \tau(\chi_{(0,+\infty)}(L))-\tau(\chi_{(-\infty,0)}(L)).
\]
\end{enumerate}
\end{definition}

The following lemma generalises
\Cref{lem:signature_quasi-projection} where~\( A=\Bdd(\mathcal{K}) \) for
\( \mathcal{K} \) a finite-dimensional Hilbert space, and~\( \tau \) is the
standard trace on it..

\begin{lemma}
Let~\( A \) be a unital \Cst-algebra and~\(
\tau \) be a densely defined trace on~\( A \).
Let~\( p\in A \) be a quasi-projection such that both~\( \kappa_0(p) \) and
\( \kappa_0(1_A-p) \) are~\( \tau \)-finite. Then  
\[ 
    \sig_\tau(2p-1_A)=\rank_\tau(p)-\rank_\tau(1_A-p).
\]
\end{lemma}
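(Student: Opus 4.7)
The statement is essentially a direct translation of the finite-dimensional computation in Lemma~\ref{lem:signature_quasi-projection} to the semi-finite trace setting, once the correct spectral identifications are made. The plan is to run the same \emph{spectral} argument as before, reading signs off the spectra of the self-adjoint elements $p$ and $2p-1_A$, and then apply $\tau$ in place of $\operatorname{tr}$.

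First I would note that, since $p$ is a self-adjoint quasi-projection, the spectrum of $p$ is real and misses the line $L=\{\Re z=\tfrac12\}$, so in fact $\spc(p)\subseteq (-\infty,\tfrac12)\cup(\tfrac12,+\infty)$. Consequently $2p-1_A$ is self-adjoint with $\spc(2p-1_A)\subseteq(-\infty,0)\cup(0,+\infty)$, hence invertible; this is what makes $\sig_\tau(2p-1_A)$ well-defined. On the spectrum of $p$, the bounded Borel functional calculus (which agrees with holomorphic functional calculus on the idempotent cut-offs) gives
\[
  \chi_{(0,+\infty)}(2p-1_A)=\chi_{(1/2,+\infty)}(p)=\kappa_0(p).
\]
Similarly, using the spectral mapping $y\in\spc(p)\iff 1-y\in\spc(1_A-p)$ together with $\Re(1-y)>\tfrac12\iff\Re y<\tfrac12$, I would identify
\[
  \chi_{(-\infty,0)}(2p-1_A)=\chi_{(-\infty,1/2)}(p)=\chi_{(1/2,+\infty)}(1_A-p)=\kappa_0(1_A-p).
\]

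With these two identifications in hand, the statement is immediate from the definition of $\sig_\tau$ and $\rank_\tau$ (\Cref{def:tau_rank_sig}):
\[
  \sig_\tau(2p-1_A)
  =\tau\bigl(\chi_{(0,+\infty)}(2p-1_A)\bigr)-\tau\bigl(\chi_{(-\infty,0)}(2p-1_A)\bigr)
  =\tau(\kappa_0(p))-\tau(\kappa_0(1_A-p)),
\]
which is $\rank_\tau(p)-\rank_\tau(1_A-p)$. I would also briefly record that the $\tau$-finiteness hypotheses on $\kappa_0(p)$ and $\kappa_0(1_A-p)$ are exactly what is needed for $\sig_\tau(2p-1_A)$ to be defined in the first place, i.e.~for both spectral projections of $2p-1_A$ to be $\tau$-finite. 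There is no real obstacle here; the only thing to be careful about is matching the holomorphic-functional-calculus definition of $\kappa_0$ used for quasi-projections with the Borel-functional-calculus cut-offs used to define $\sig_\tau$ and $\rank_\tau$, which is legitimate because $\spc(p)$ is real and avoids $\tfrac12$, so the two calculi agree on the relevant clopen subset of the spectrum.
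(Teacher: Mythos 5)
Your proof is correct, and it follows exactly the spectral template the paper uses for the finite\mbox{-}dimensional Lemma~\ref{lem:signature_quasi-projection}, which this lemma is stated to generalise. Worth noting: the paper in fact gives no proof for this lemma at all, so your argument fills in that gap in the intended way, and the closing remark about why the holomorphic $\kappa_0$ agrees with the Borel/continuous cut-offs is the right technical point to record since the statement quietly mixes the two functional calculi.

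One tiny simplification you could make: since $2p-1_A$ is self-adjoint and invertible, $0\notin\spc(2p-1_A)$ is at positive distance from the (compact) spectrum, so $\chi_{(0,+\infty)}$ restricted to $\spc(2p-1_A)$ is already continuous and you never actually need Borel functional calculus — continuous functional calculus suffices throughout, matching cleanly with the holomorphic calculus defining $\kappa_0$.
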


Now we shall prove an analogue of \Cref{thm:spectral_localiser} in this
setting, replacing signature and rank 
by their~\( \widehat{\tau} \)-versions. 

\begin{theorem}\label{thm:spectral_localiser_semifinite}
For every~\( t,\lambda>0 \),~\( \sig_{\widehat{\tau}}(L^\tau_{t^{-1},\lambda}) \),~\(
\rank_{\widehat{\tau}}(p^e_{t,\lambda}) \) and~\( \rank_{\widehat{\tau}}(p^f_{t,\lambda}) \)
are well-defined and satisfy the equality
\[ 
    \frac{1}{2}\sig_{\widehat{\tau}}(L^{\tau}_{t^{-1},\lambda})=
    \rank_{\widehat{\tau}}(p^e_{t,\lambda})-
    \rank_{\widehat{\tau}}(p^f_{t,\lambda}).
\]
As a consequence, let~\( \varepsilon,\delta>0 \) satisfy~\(
\varepsilon+\delta<\frac{1}{400} \). For any choice~\( t,\lambda \)
satisfying~\(
t>4\varepsilon^{-1}\norm*{[D,v]} \) and~\( \lambda>t\delta^{-1}
\), the half--\( \widehat{\tau} \)-signature 
\( \frac{1}{2}\sig_{\widehat{\tau}}(L_{t^{-1},\lambda}) \) 
coincides with the~\( \tau \)-numerical index 
pairing~\( \tau_*(\braket{[v],[D]}) \).   
\end{theorem}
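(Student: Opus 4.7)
The plan is to mirror the proof of \Cref{thm:spectral_localiser}, replacing ordinary rank/signature throughout by their \( \widehat{\tau} \)-versions, and then to invoke \Cref{thm:semi-finite_index_pairing_subspace} to identify the resulting quantity with the \( \tau \)-numerical index pairing.

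First I would verify that the three quantities in the statement are well-defined. By the argument in the proof of \Cref{thm:semi-finite_index_pairing_subspace}, the spectral projection \( P_\lambda^\tau=\chi_{[-\lambda,\lambda]}(D^\tau\oplus D^\tau) \) is \( \widehat{\tau} \)-finite (this uses \Cref{lem:resolvent_tau_compact}). Since \( p^e_{t,\lambda} \) and \( p^f_{t,\lambda} \) live in the compressed algebra \( P_\lambda^\tau\mathcal{N}P_\lambda^\tau \), both \( \kappa_0(p^e_{t,\lambda}) \) and \( P_\lambda^\tau-\kappa_0(p^e_{t,\lambda}) \) (and similarly for \( p^f_{t,\lambda} \)) are \( \widehat{\tau} \)-finite projections, so the \( \widehat{\tau} \)-ranks exist. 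The operator \( L_{t^{-1},\lambda}^\tau \) is self-adjoint on \( \mathcal{H}_\lambda^\tau \), and its spectral projections \( \chi_{(0,\infty)}(L_{t^{-1},\lambda}^\tau) \) and \( \chi_{(-\infty,0)}(L_{t^{-1},\lambda}^\tau) \) are majorised by \( \id_{\mathcal{H}_\lambda^\tau}\oplus\id_{\mathcal{H}_\lambda^\tau}=P_\lambda^\tau\oplus P_\lambda^\tau \), hence are \( \widehat{\tau} \)-finite, so \( \sig_{\widehat{\tau}}(L_{t^{-1},\lambda}^\tau) \) exists.

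Next I would reduce the signature identity to a computation that reproduces the finite-dimensional argument. Applying the preceding lemma (the semifinite analogue of \Cref{lem:signature_quasi-projection}) to \( p^e_{t,\lambda} \) and \( p^f_{t,\lambda} \) inside the unital algebra \( P_\lambda^\tau\mathcal{N}P_\lambda^\tau \), the quantities \( \widehat{\tau}(P_\lambda^\tau) \) cancel exactly as \( \dim\mathcal{H}_\lambda \) does in \Cref{thm:spectral_localiser}, giving
\[
\rank_{\widehat{\tau}}(p^e_{t,\lambda})-\rank_{\widehat{\tau}}(p^f_{t,\lambda})
=\tfrac{1}{2}\sig_{\widehat{\tau}}(2p^e_{t,\lambda}-P_\lambda^\tau)-\tfrac{1}{2}\sig_{\widehat{\tau}}(2p^f_{t,\lambda}-P_\lambda^\tau).
\]
For the \( p^f \) contribution, the constant projection \( f_t=\operatorname{diag}(0,\id) \) commutes with \( D\oplus D \), hence with \( D^\tau\oplus D^\tau \) and with the spectral projection onto \( \mathcal{H}_\lambda^\tau \). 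So \( 2p^f_{t,\lambda}-P_\lambda^\tau \) is unitarily equivalent (via the swap of the two copies) to its negative; equivalently it equals \( \operatorname{diag}(-\chi_{[-\lambda,\lambda]}(D^\tau),\chi_{[-\lambda,\lambda]}(D^\tau)) \), whose positive and negative spectral projections have equal \( \widehat{\tau} \)-trace. Thus \( \sig_{\widehat{\tau}}(2p^f_{t,\lambda}-P_\lambda^\tau)=0 \).

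The crux is then to show that \( \mathcal{L}^e_{t^{-1},\lambda}\defeq 2p^e_{t,\lambda}-P_\lambda^\tau \) is congruent to \( L^\tau_{t^{-1},\lambda} \) via a positive, invertible element of \( P_\lambda^\tau\mathcal{N}P_\lambda^\tau \), and that \( \sig_{\widehat{\tau}} \) is invariant under such congruence. For the congruence, the algebraic computation from the proof of \Cref{thm:spectral_localiser} goes through verbatim: since \( D^\tau \) is self-adjoint, Borel functional calculus applied to \( D^\tau \) yields operators \( c_t^\tau, s_t^\tau \) satisfying the same identities as \( c_t, s_t \), and the choice of functions in \Cref{thm:choice_cs} gives
\[
\mathcal{L}^e_{t^{-1},\lambda}=\Delta_t\,L^\tau_{t^{-1},\lambda}\,\Delta_t,\qquad
\Delta_t\defeq\begin{pmatrix}(1+t^{-2}{D^\tau}^2)^{-\sfrac{1}{4}} & 0 \\ 0 & (1+t^{-2}{D^\tau}^2)^{-\sfrac{1}{4}}\end{pmatrix},
\]
where the right-hand side is truncated to \( \mathcal{H}_\lambda^\tau \) and \( \Delta_t \) commutes with \( P_\lambda^\tau \), so its truncation remains positive and invertible. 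The invariance of \( \sig_{\widehat{\tau}} \) under congruence by a positive invertible \( \Delta \) can be proved by the homotopy \( s\mapsto \Delta_s L \Delta_s \) with \( \Delta_s\defeq (1-s)\id+s\Delta \): each \( \Delta_s \) is positive and invertible, so \( \Delta_s L\Delta_s \) is a norm-continuous path of invertible self-adjoint operators, along which \( \chi_{(0,\infty)} \) and \( \chi_{(-\infty,0)} \) are norm-continuous paths of projections with constant \( \widehat{\tau} \)-value (this is the step I expect will require the most care, since it is the semifinite replacement for Sylvester's law of inertia; it may be cleanest to argue via the K-theory class \( [\chi_{(0,\infty)}(L)]-[\chi_{(0,\infty)}(\id)] \in\K_0(\Cpt_{\widehat{\tau}}) \) to which \( \widehat{\tau}_* \) applies). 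Combining all the equalities yields \( \tfrac{1}{2}\sig_{\widehat{\tau}}(L^\tau_{t^{-1},\lambda})=\rank_{\widehat{\tau}}(p^e_{t,\lambda})-\rank_{\widehat{\tau}}(p^f_{t,\lambda}) \), and the consequence follows by combining with \Cref{thm:semi-finite_index_pairing_subspace} and the description of \( \widehat{\tau}_* \) on K-theory from \Cref{thm:commutative_diagram_trace}.
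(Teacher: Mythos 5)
Your proposal is correct and follows essentially the same route as the paper: establish $\widehat{\tau}$-finiteness of the relevant spectral and functional-calculus projections by observing they are subprojections of $P_\lambda^\tau$, then replay the congruence/signature computation of \Cref{thm:spectral_localiser} with $\dim$ and $\sig$ replaced by their $\widehat{\tau}$-versions. The one genuine addition you make is to explicitly address invariance of $\sig_{\widehat{\tau}}$ under congruence by a positive invertible operator---which the paper leaves implicit in the phrase ``the rest of the proof follows in the same way as \Cref{thm:spectral_localiser}''---and your homotopy argument via $\Delta_s\defeq(1-s)\id+s\Delta$ (or its reformulation in $\K_0(\Cpt_{\widehat{\tau}})$) is a valid semifinite substitute for Sylvester's law of inertia.
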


\begin{proof}
We have proven in \Cref{thm:semi-finite_index_pairing_subspace} that the
orthogonal projection~\( P_{\lambda}^\tau\colon
\widehat{\mathcal{H}}^{\tau}\to\mathcal{H}_{\lambda}^\tau \) is~\(
\widehat{\tau} \)-finite. Note that
\( L_{t^{-1},\lambda} \),~\( p^e_{t,\lambda} \) and
\( p^f_{t,\lambda} \) are all supported on~\( \mathcal{H}^\tau_{\lambda}
\). Therefore,
\[ 
    \kappa_0(p^e_{t,\lambda}),\quad\kappa_0(p^f_{t,\lambda}),\quad
    \chi_{(0,+\infty)}(L),\quad\chi_{(-\infty,0)}(L)
\]
are all subprojections of~\( P_{\lambda}^{\tau} \), hence 
\( \widehat{\tau} \)-finite. So 
\Cref{def:tau_rank_sig} applies to them for the
\Cst-algebra~\( A=\Mat_2(\Cpt_{\widehat{\tau}}) \) with densely defined
trace~\( \widehat{\tau} \). The rest of the proof
follows in the same way as \Cref{thm:spectral_localiser}.
\end{proof}

\begin{remark}
It is well-known that numerical index pairings can be stated in the
framework of \emph{semi-finite index theory}
(cf.~\cite{Carey-Gayral-Rennie-Sukochev:Index_theory_locally_compact}). In
this general setting, a numerical index pairing is expressed as a pairing
between a \emph{semi-finite spectral triple} with a K-theory class.
The proof of \Cref{thm:commutative_diagram_trace} actually show the
following:

Let~\( A \) and~\( B \) be unital \Cst-algebras,~\( \tau \) a finite trace
on~\( B \), and~\( (\mathcal{A},E,D) \) an odd unbounded Kasparov~\( A \)-\(
B \)--module. By \Cref{lem:resolvent_tau_compact}, the data
\[ 
    (\mathcal{A},\mathcal{H}^\tau,D^\tau)\defeq 
    (\mathcal{A},E\otimes_B L^2(E,\tau),D\otimes\id)
\]
is a semi-finite spectral triple relative to~\( (\mathcal{N},\tau) \)
in the sense of
\cite{Carey-Phillips-Rennie-Sukochev:Local_index_formula_1}*{Definition
2.1},
where~\( \mathcal{N} \) is the enveloping von Neumann algebra of~\(
\Cpt_B(E) \) in the representation~\( \varrho^\tau \)
(cf.~\Cref{thm:commutative_diagram_trace}). 
Similar results have been achieved in
\cite{Goffeng-Rennie-Usachev:KMS_states}*{Theorem 2.12}.

The operators~\( L^\tau_{\kappa} \) and~\( L^\tau_{\kappa,\lambda} \)
of \Cref{thm:spectral_localiser_semifinite} were introduced by
Schulz-Baldes and Stoiber as a variant of the
spectral localiser that works for semi-finite index pairings. Thus
\Cref{thm:spectral_localiser_semifinite} recovers a special case of the
main theorem of \cite{SBaldes-Stoiber:Semifinite_spectral_localizer}.

Finally, we note that the setting in
this section is, albeit specific, yet enough to
cover many interesting examples that naturally emerge from the index theory
of \emph{tight-binding models} of topological insulators as in 
\cites{Kellendonk-Richter-SBaldes:Edge_current_channels,Bourne-Kellendonk-Rennie:Bulk-edge_correspondence,Bourne-Prodan:Noncommutative_Chern_numbers,Bourne-Mesland:Topological_phases},
in which the \Cst-algebra~\( A \) is given by the reduced \Cst-algebra of
an \'etale groupoid~\( \mathcal{G}\rightrightarrows \Omega \), either from
a crossed product by~\( \Z^d \) or as a transversal of a crossed product by
\( \R^d \), hence unital; 
and~\( B \) is the commutative, unital \Cst-algebra~\(
\Cont(\Omega) \). A finite, faithful
trace on~\( B \) thus corresponds to a finite, regular Borel measure on~\(
\Omega\)  with full support. These conditions 
are usually legitimated by physical assumptions.
\end{remark}

% Appendix starts from here
\setcounter{section}{0}
\renewcommand\theHsection{P2.\thesection}
\renewcommand\thesection{\Alph{section}}

\section{The asymptotic morphism}
\label[appendix]{app:asymptotic_morphism}
Given an \emph{essential} odd unbounded Kasparov~\( A \)-\( B \)--module~\(
(\mathcal{A},E,D) \), where essential means~\( \overline{\mathcal{A}E}=E
\), then the family~\( (\Phi^D_t)_{t\in[1,\infty)} \)
\[
  \Phi_{t}^{D}\colon\Co(\mathbb{R})\otimes A\to \Cpt_B(E), \quad f\otimes
a\mapsto f(t^{-1}D)a,
\]
defines an asymptotic morphism, hence an odd
E-theory class~\( \llbracket\Phi^D_t\rrbracket\in\E_1(A,B) \).  It is
alluded to in the literature that the natural transformation~\(
\KK_{1}(A,B)\to\E_{1}(A,B) \) is implemented by the map~\(
(\mathcal{A},E,D)\mapsto \llbracket\Phi^{D}_{t}\rrbracket \). We provide a
rigorous proof of this folklore statement in \Cref{thm:Phi^D}.

For even unbounded Kasparov modules, a realisation of the natural
transformation~\( \KK_0(A,B)\to\E_0(A,B) \) has been described by Connes
and Higson in their unpublished note
\cite{Connes-Higson:Almost_homomorphisms}*{Section 8}.  Their construction
coincides with~\( (\Phi_t^D)_{t\in [1,\infty)} \) up to the Bott
periodicity isomorphism~\( \KK_0(A,B)\simeq\KK_1(A,\Co(\R)\otimes B) \).
For~\( D \) a Dirac-type operator on a Riemannian manifold and~\( B=\C \),
asymptotic morphisms of the form~\( f\otimes a\mapsto f(t^{-1}D)a \) 
have also been studied by
Guentner \cite{Guentner:PhD_thesis}*{Section 4.4} and
Dumitra\c{s}cu \cite{Dumitrascu:K-homology}*{Section 3}. For the Dirac
element in KK-theory for proper group actions on the Hilbert space, 
such an asymptotic morphism
has been considered by Higson and Kasparov
\cite{Higson-Kasparov:E-theory_KK-theory_groups}*{Section 8}.

Another such realisation of the natural transformation~\(
\KK\Rightarrow\E \) has been given by Dumitra\c{s}cu
\cite{Dumitrascu:KE-theory} based on
a variant of E-theory for~\( \Z/2 \)-graded \Cst-algebras.  
In particular, we note that the family of operators~\( (F_t\defeq
t^{-1}D(1+t^{-2}D^2)^{-\sfrac{1}{2}})_{t\in [1,\infty)} \) in
\eqref{eq:wt_Ft_Pt} below gives an \emph{asymptotic Kasparov module}
in the sense of \cite{Dumitrascu:KE-theory}*{Definition 3.6}.

The proof presented here is motivated by
\cite{Higson-Kasparov:E-theory_KK-theory_groups}*{Section 8}.
Recall that the odd unbounded Kasparov module~\( (\mathcal{A},E,D) \) 
gives a semi-split
extension \eqref{eq:Kasparov_KK_to_extension} of \Cst-algebras.
Connes and Higson has introduced in \cite{Connes-Higson:E-theory}, 
a ``standard'' asymptotic morphism
associated to a semi-split extension of \Cst-algebras. 
Its induced map in K-theory coincides the one induced by the Kasparov
product \eqref{eq:KK_index_pairing}:

\begin{lemma}[\cite{Connes-Higson:E-theory}*{Lemme
10}]\label{lem:Connes-Higson_extension_to_E}
Let
\[ \begin{tikzcd}
I \arrow[r,tail] & A \arrow[r, two heads, "p"] & B \arrow[l, bend left=30,
"s"]
\end{tikzcd} \]
be an extension of \Cst-algebras, which is semi-split by a section~\( s\colon
B\to A \). Then:
\begin{enumerate}
\item 
Let~\( (u_t)_{t\in [1,\infty)} \) be a
approximate unit of~\( I \) which is quasi-central for~\( A \). Then
there is an
asymptotic morphism~\( (\Phi^{\mathrm{CH}}_t\colon\Co(0,1)\otimes B\to
I)_{t\in[1,\infty)} \)
given by
\[
\Phi^\mathrm{CH}_t\colon \Co(0,1)\otimes B\to I,\qquad f\otimes b\mapsto
f(u_t)s(b).
\]
\item If~\( (u_t)_{t\in[1,\infty)} \) and~\( (v_t)_{t\in [1,\infty)} \) are
two different approximate units of~\( I \) that are quasi-central for~\( A
\). Then their induced asymptotic morphisms are asymptotically equivalent.  
\item If the extension is induced by an odd unbounded Kasparov module as in
\eqref{eq:Kasparov_KK_to_extension}, then asymptotic morphism becomes
\( \Phi_t^\mathrm{CH}\colon \Co(0,1)\otimes A\to\Cpt_B(E) \) 
and its induced map~\( \K_0(\Co(0,1)\otimes A)\to\K_0(\Cpt_B(E)) \) 
coincides
with the \K-theoretic index pairing~\( [v]\mapsto\braket{[v],[D]} \) up
to an isomorphism~\( \K_0(\Co(0,1)\otimes A)\simeq\K_1(A) \). 
\end{enumerate}
\end{lemma}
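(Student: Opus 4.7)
The plan is to verify the three assertions in sequence, following the strategy of Connes and Higson.

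For (1), I would check the axioms of \Cref{def:asymptotic_morphism} directly. Continuity of \( t\mapsto \Phi_t^{\mathrm{CH}}(f\otimes b) \) follows from norm-continuity of the functional calculus. The key ingredient, from which the remaining axioms flow, is the observation that for every \( f\in\Co(0,1) \) and every \( x\in I \), one has \( \norm*{f(u_t)x}\to 0 \) as \( t\to\infty \). I would establish this by approximating \( f \) uniformly by functions of the form \( x\mapsto x(1-x)g(x) \) with \( g\in\Cont([0,1]) \) and using the approximate-unit estimate \( \norm*{u_t(1-u_t)x}\leq\norm*{(1-u_t)x}\to 0 \). The multiplicativity axiom then follows because, by quasi-centrality of \( (u_t) \),
\[
    f_1(u_t)\,s(b_1)\,f_2(u_t)\,s(b_2)\sim (f_1f_2)(u_t)\,s(b_1)\,s(b_2),
\]
while the error \( s(b_1)s(b_2)-s(b_1b_2) \) lies in \( I \) (since \( s \) is a set-theoretic section of the quotient), so the key ingredient yields the desired vanishing. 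The linearity and \( * \)-axioms are handled analogously.

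For (2), given two quasi-central approximate units \( (u_t) \) and \( (v_t) \) of \( I \), I would observe that for every \( r\in[0,1] \) the convex combination \( w^r_t\defeq(1-r)u_t+rv_t \) is again a quasi-central approximate unit of \( I \). Applying (1) to the family \( (w^r_t) \) produces an asymptotic morphism \( \Co(0,1)\otimes B\to\Cont([0,1],I) \) interpolating between the two given ones, furnishing an asymptotic homotopy. The stronger asymptotic equivalence then follows by combining the key ingredient of (1) with estimates of the form \( \norm*{(f(u_t)-f(v_t))\,s(b)}\to 0 \); the full argument is carried out in \cite{Connes-Higson:E-theory}*{Lemme 10}.

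For (3), the task is to identify the map induced by \( (\Phi^{\mathrm{CH}}_t) \) on \( \K_0 \) with the boundary map \( \partial\colon\K_1(A)\to\K_0(\Cpt_B(E)) \) of the extension \eqref{eq:Kasparov_KK_to_extension}. I would apply the asymptotic morphism to the suspension representative \( [e]-[f]\in\K_0(\Co(0,1)\otimes A) \) of \( [v]\in\K_1(A) \) constructed in \Cref{lem:odd_K-theory}, and check that the resulting class in \( \K_0(\Cpt_B(E)) \) agrees with the standard exponential--lift formula for \( \partial[v] \) produced from the Kasparov semi-split section \( s(a)=(a,P_1\varrho(a)P_1) \). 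The main obstacle is bookkeeping: one must carefully track how the suspension isomorphism \( \K_0(\mathrm{S}A)\simeq\K_1(A) \) (with the sign convention fixed in the remark after \Cref{lem:odd_K-theory}) interacts with the Connes--Higson construction, in order to produce the correct boundary map rather than its inverse. Once compatibility is verified on the generating class \( [v] \), naturality of both sides under the Busby invariant extends the identification to all of \( \K_1(A) \).
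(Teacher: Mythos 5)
The paper states this lemma purely as a citation to Connes--Higson and gives no internal proof, so there is no in-paper argument to compare against; what follows is a check of your reconstruction against the standard Connes--Higson argument.

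Your sketch of (1) is correct and identifies the right key estimate: for $f\in\Co(0,1)$ and $x\in I$, $\norm{f(u_t)x}\to 0$, which together with quasi-centrality and the fact that $s(b_1)s(b_2)-s(b_1b_2)\in I$ (and $s(b^*)-s(b)^*\in I$, etc.) delivers all the asymptotic-morphism axioms.

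For (2) there is a real gap in the logic of your sketch, even granting that you defer the details to the reference. The convex-combination path $w^r_t=(1-r)u_t+rv_t$ does give a quasi-central approximate unit for each $r$, and the resulting family does give a genuine asymptotic \emph{homotopy} (after the routine check that the estimates are uniform in $r$, using $\norm{[w^r_t,a]}\leq\norm{[u_t,a]}+\norm{[v_t,a]}$). But the lemma claims the strictly stronger conclusion of asymptotic \emph{equivalence}, and your proposed estimate $\norm{(f(u_t)-f(v_t))s(b)}\to 0$ is \emph{not} a direct application of the key ingredient of (1): that ingredient controls $f(u_t)x$ only for $x\in I$, whereas $s(b)$ lies in $A$ and typically not in $I$. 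The actual argument must exploit that the \emph{difference} $f(u_t)-f(v_t)$ is in $I$ and that $(u_t-v_t)x\to 0$ for $x\in I$, combined with quasi-centrality of both nets; it is not a corollary of the estimate you cite. As you defer to the reference anyway, this does not invalidate the sketch, but the claimed provenance of the estimate is misleading.

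For (3), your strategy (compute on the explicit suspension representative $[e]-[f]$ from \Cref{lem:odd_K-theory} and propagate by naturality, minding the sign convention) is workable. A cleaner route, closer in spirit to Connes--Higson and to how the paper uses the lemma in \Cref{prop:homotopy_asymptotic_morphism}, is to invoke the general fact that the Connes--Higson asymptotic morphism attached to a semi-split extension $I\rightarrowtail A\twoheadrightarrow B$ induces the boundary map $\K_0(\mathrm{S}B)\to\K_0(I)$ on K-theory; specialising to the extension \eqref{eq:Kasparov_KK_to_extension} and unwinding \eqref{eq:K-theoretic_index_pairing} yields (3) without computing on representatives.
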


Our first step is to simplify the Connes-Higson asymptotic morphism in the
non-unital case. In what follows, we define
\begin{equation}\label{eq:wt_Ft_Pt}
\begin{aligned}
    w_t&\defeq (1+t^{-2}D^2)^{-1},\\
    F_t&\defeq t^{-1}D(1+t^{-2}D^2)^{-\sfrac{1}{2}},\\
    P_t&\defeq\frac{1+F_t}{2}
    =\frac{1+t^{-1}D(1+t^{-2}D^2)^{-\sfrac{1}{2}}}{2}.
\end{aligned}
\end{equation}
We note that~\( F_1=D(1+D^2)^{-\sfrac{1}{2}} \) and~\(
P_1=\frac{1+D(1+D^2)^{-\sfrac{1}{2}}}{2} \).
This amounts to choosing the chopping function~\(
\chi(x)\defeq x(1+x^2)^{-\sfrac{1}{2}} \) in \eqref{eq:P_1}.

\begin{lemma}\label{lem:reduction_of_au}
Let~\( (\mathcal{A},E,D) \) be an odd unbounded 
Kasparov~\( A \)-\( B \)--module,
\( (v_{t})_{t\in[1,\infty)}\subseteq\mathcal{A} \) 
an approximate unit for~\( A \) that is quasicentral for~\( P_{1} \).
Then the representative~\( f\otimes a\mapsto
f(w_{t}^{1/2}v_{t}w_{t}^{1/2})P_{1}a \) of the Connes--Higson asymptotic
morphism is asymptotically equivalent to~\( f\otimes a\mapsto
f(w_{t})P_{1}a \).
\end{lemma}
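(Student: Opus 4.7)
The plan is to use Stone--Weierstrass to reduce the equivalence to a single generator, and then to bound the resulting difference by commutator estimates. Both formulas define asymptotic morphisms $\Co(0,1)\otimes A\to\Cpt_B(E)$, and asymptotic equivalence is preserved by sums, adjoints, and asymptotic products. The functional calculus extends both expressions verbatim to $\Co((0,1])\otimes A$, and since $\Co((0,1])$ is generated as a \Cst-algebra by the identity function $h(x)=x$, it suffices to verify, for every $a\in\mathcal{A}$, the equivalence
\[
w_t^{1/2}v_tw_t^{1/2}P_1 a\sim w_tP_1 a.
\]

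The difference equals $w_t^{1/2}(v_t-1)w_t^{1/2}P_1 a$. Using the identity
\[
v_tw_t^{1/2}P_1 \;=\; w_t^{1/2}P_1v_t \,+\, w_t^{1/2}[v_t,P_1] \,+\, [v_t, w_t^{1/2}]P_1
\]
and left-multiplying by $w_t^{1/2}$, the difference splits as
\[
w_tP_1(v_t-1)a \;+\; w_t[v_t,P_1]a \;+\; w_t^{1/2}[v_t,w_t^{1/2}]P_1 a.
\]
The first summand vanishes in norm because $(v_t-1)a\to 0$ (approximate unit for $A$) and $\|w_tP_1\|\le1$; the second vanishes by the quasi-centrality of $(v_t)$ for $P_1$ together with $\|w_t\|\le1$.

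The third summand is the main analytic obstacle. Apply the integral representation
\[
w_t^{1/2}=\frac{1}{\pi}\int_0^\infty\lambda^{-1/2}(1+\lambda+t^{-2}D^2)^{-1}\,d\lambda
\]
and the Leibniz identity $[v_t,D^2]=D[v_t,D]+[v_t,D]D$ to express $[v_t,w_t^{1/2}]$ as an integral in $[v_t,D]$ sandwiched between two resolvents of $1+\lambda+t^{-2}D^2$. The standard bounds $\|(1+\lambda+t^{-2}D^2)^{-1}\|\le(1+\lambda)^{-1}$ and $\|t^{-1}D(1+\lambda+t^{-2}D^2)^{-1}\|\le(2\sqrt{1+\lambda})^{-1}$, together with the convergence of $\int_0^\infty\lambda^{-1/2}(1+\lambda)^{-3/2}d\lambda$, yield an estimate of order $t^{-1}\|[D,v_t]\|$ for the commutator.

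The hard part will be ensuring that $t^{-1}\|[D,v_t]\|\to 0$. Quasi-centrality of $(v_t)$ for $P_1$ alone does not deliver this, so one either chooses $(v_t)\subseteq\mathcal{A}$ in advance with sublinear growth of $\|[D,v_t]\|$ in $t$, or replaces $(v_t)$ by a smoothed variant $\widetilde v_t := \int \phi(s)\,v_{t+s}\,ds$ with $\phi\in\Cinfc(\R)$, yielding an asymptotically equivalent quasi-central approximate unit with the required commutator control. With this adjustment the generator case is complete, and asymptotic multiplicativity together with continuity of functional calculus upgrade the equivalence to all of $\Co(0,1)\otimes A$.
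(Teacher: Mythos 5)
Your reduction to the generator $h(x)=x$ of $\Co(0,1]$ matches the paper, and your first two summands are handled correctly. The gap lies in the third summand $w_t^{1/2}[v_t,w_t^{1/2}]P_1a$ and the workaround you propose for it, which is not a valid proof step. First, choosing $(v_t)$ in advance with prescribed commutator growth changes the statement: the lemma must hold for an arbitrary quasicentral approximate unit. Second, the smoothing $\widetilde v_t=\int\phi(s)v_{t+s}\,ds$ averages only in the time parameter $t$ and therefore gives no control whatsoever over $[D,\widetilde v_t]$; there is no mechanism by which temporal smoothing tames a commutator with the unbounded operator $D$. So neither option supplies the bound $t^{-1}\norm{[D,v_t]}\to 0$ you want, and the integral-formula estimate cannot be invoked.

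The way out is to never produce $[v_t,w_t^{1/2}]$ as a naked factor, which is what the paper does. Instead of commuting $v_t$ across $w_t^{1/2}P_1$, commute the \emph{fixed} element $a$ across $P_1$: the Kasparov module property gives $[P_1,a]\in\Cpt_B(E)$, and since $w_t-w_t^{1/2}v_tw_t^{1/2}\to 0$ strictly, the piece $(w_t-w_t^{1/2}v_tw_t^{1/2})[P_1,a]$ dies in norm. For what remains, commute $a$ across $w_t^{1/2}$ using $\norm{[w_t^{1/2},a]}\leq 2t^{-1}\norm{[D,a]}\to 0$ (\Cref{prop:commutator_sqrt_ctst}, which applies because $a\in\mathcal{A}$ is fixed), reducing to $w_t^{1/2}\big((1-v_t)a\big)w_t^{1/2}P_1$, which vanishes by the approximate unit property $(1-v_t)a\to 0$. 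Note in particular that no bound on $[D,v_t]$ is ever needed.

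For completeness, your own third term can also be salvaged by the same device, had you noticed it: write
\[
w_t^{1/2}[v_t,w_t^{1/2}]P_1a = w_t^{1/2}[v_t,w_t^{1/2}]aP_1 + w_t^{1/2}[v_t,w_t^{1/2}][P_1,a].
\]
The second piece tends to $0$ because $[P_1,a]$ is compact and $w_t^{1/2}[v_t,w_t^{1/2}]$ is uniformly bounded and converges strongly to $0$. For the first piece, bound it by $\norm{[v_t,w_t^{1/2}]a}$, expand
\[
[v_t,w_t^{1/2}]a = (v_t-1)aw_t^{1/2} + (v_t-1)[w_t^{1/2},a] - w_t^{1/2}(v_t-1)a,
\]
and observe each term vanishes by $(v_t-1)a\to 0$ and $\norm{[w_t^{1/2},a]}\to 0$. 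The moral is that the commutators one must control are those with the fixed algebra element $a$, not with the approximate unit $v_t$.
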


\begin{proof}
Since~\( \overline{AE}=E \), it follows that~\( v_{t}\to 1 \) \st-strictly and
\( w_{t}^{1/2}v_{t}w_{t}^{1/2} \) is an approximate unit for~\( \Cpt_B(E) \),
quasi-central for~\( A \) and~\( P_{1} \).

Observe that~\( w_{t}P_1a=P_1w_{t}a\in\Cpt_B(E) \) and
\( w_{t}^{1/2}v_{t}w_{t}^{1/2}P_1a=w_{t}^{1/2}v_{t}P_1w_{t}^{1/2}a\in\Cpt_B(E) \),
so both asymptotic families extend to families~\( \Co(0,1]\otimes A\to
\Cpt_B(E) \) (but do not give asymptotic morphisms!). The~\( \Cst \)-algebra
\( \Co(0,1] \) is singlely generated by the function~\( f(x)=x \), 
so it suffices to check that 
\[
    \lim_{t\to\infty}\Phi_{t}(h\otimes a)-\Psi_{t}(h\otimes a)=0,
\] 
for the function~\( h(x)=x \). Now
\begin{align*}
\Phi_{t}(h\otimes a)-\Psi_{t}(h\otimes a)&=(w_{t}-w_{t}^{1/2}v_{t}w_{t}^{1/2})P_1a\\
&\sim (w_{t}-w_{t}^{1/2}v_{t}w_{t}^{1/2})aP_1\\
&\sim w_{t}^{1/2}((1-v_{t})a)w_{t}^{1/2}P_1\sim 0,
\end{align*}
which establishes the result.
\end{proof}

Our next goal is to establish that~\( f\otimes a\mapsto f(w_{t})a \) is
asymptotically homotopic to~\( f(P_{t})a \).

\begin{lemma}\label{lem:difference_construction} 
Let~\( A,B,C \) be~\( \Cst \)-algebras with~\( A,B\subseteq C \). Suppose that~\(
\{\mathfrak{p}_t\}_{t\in[1,\infty)}\) and~\( \{\mathfrak{q}_t\}_{t\in
[1,\infty)} \) are continuous families of contractions in~\( C \), such
that for all~\( f\in \Co(0,1) \) and~\( a\in A \):
\begin{center}
\(\left(f(\mathfrak{p}_t)-
f(\mathfrak{q}_t)\right)a\in B\)\quad and\quad 
\( [\mathfrak{p}_t-\mathfrak{q}_t,a]\to 0\) in
norm.
\end{center}
Then~\( \Psi^{\mathfrak{p},\mathfrak{q}}_{t}\colon f\otimes a\mapsto
(f(\mathfrak{p}_t)-f(\mathfrak{q}_t))a \) is an asymptotic morphism
\( \Co(0,1)\otimes A\to B \).  
\end{lemma}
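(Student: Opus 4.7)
The plan is to verify the three axioms of \Cref{def:asymptotic_morphism} for $\Psi^{\mathfrak{p},\mathfrak{q}}_t$: well-definedness together with norm-continuity in $t$, and asymptotic $\ast$-linearity and multiplicativity on elementary tensors. By hypothesis $\Psi^{\mathfrak{p},\mathfrak{q}}_t(f\otimes a) = (f(\mathfrak{p}_t)-f(\mathfrak{q}_t))a$ lies in $B$, and the uniform estimate $\|(f(\mathfrak{p}_t)-f(\mathfrak{q}_t))a\| \leq 2\|f\|_\infty\|a\|$ extends this to a bounded linear family $\Co(0,1)\otimes A\to B$. Norm continuity in $t$ follows from continuity of $t\mapsto \mathfrak{p}_t,\mathfrak{q}_t$ together with norm continuity of the continuous functional calculus on contractions. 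By density and uniform boundedness, it suffices to verify the asymptotic axioms on elementary tensors $f\otimes a$ with $f$ drawn from the generating set $\{e_n(x)=x^n(x-1)\}_{n\geq 1}$ of $\Co(0,1)$.

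The crucial intermediate estimate is
\[ [f(\mathfrak{p}_t)-f(\mathfrak{q}_t),a] \longrightarrow 0 \qquad\text{as } t\to\infty, \]
for every $f\in\Co(0,1)$ and $a\in A$. On the polynomial generators $e_n$, the telescoping identity $\mathfrak{p}_t^k-\mathfrak{q}_t^k=\sum_{j=0}^{k-1}\mathfrak{p}_t^j(\mathfrak{p}_t-\mathfrak{q}_t)\mathfrak{q}_t^{k-1-j}$, combined with the Leibniz rule for commutators and the contraction bounds $\|\mathfrak{p}_t\|,\|\mathfrak{q}_t\|\leq 1$, expresses this commutator as a norm-bounded sum of terms each containing a factor $[\mathfrak{p}_t-\mathfrak{q}_t,a]$, which tends to zero by hypothesis. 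The asymptotic $\ast$-axiom then follows immediately, since $\mathfrak{p}_t,\mathfrak{q}_t$ are self-adjoint and hence $\Psi_t((f\otimes a)^*)-\Psi_t(f\otimes a)^* = [\bar f(\mathfrak{p}_t)-\bar f(\mathfrak{q}_t),a^*]$ tends to zero by the same estimate applied to $a^*$.

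The main obstacle is the asymptotic multiplicativity axiom $\Psi_t(fg\otimes ab)\sim \Psi_t(f\otimes a)\Psi_t(g\otimes b)$. Setting $u_f=f(\mathfrak{p}_t)-f(\mathfrak{q}_t)$ and $u_g=g(\mathfrak{p}_t)-g(\mathfrak{q}_t)$, the algebraic identities
\[ (fg)(\mathfrak{p}_t)-(fg)(\mathfrak{q}_t) = u_f\,g(\mathfrak{p}_t) + f(\mathfrak{q}_t)\,u_g = u_f\,g(\mathfrak{q}_t) + f(\mathfrak{p}_t)\,u_g \]
reorganize the difference $\Psi_t(fg\otimes ab)-\Psi_t(f\otimes a)\Psi_t(g\otimes b)$ into a sum of terms each carrying a commutator factor of the form $[u_g,a]$ or $[u_f,b]$, with the remaining factors norm-bounded and, crucially, elements of $B$ via the hypothesis $f(\mathfrak{p}_t)a,f(\mathfrak{q}_t)a\in B$ together with the fact that $B\subseteq C$ is a $\Cst$-subalgebra. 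The commutator estimate from the previous paragraph then forces each residual to tend to zero in norm, completing the verification that $\Psi^{\mathfrak{p},\mathfrak{q}}_t$ is an asymptotic morphism.
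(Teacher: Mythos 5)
Your approach is structurally different from the paper's: you verify the asymptotic axioms on the generating family $\{e_n(x) = x^n(x-1)\}_{n\geq 1}$ of $\Co(0,1)$ and route the verification through the intermediate estimate $[f(\mathfrak{p}_t) - f(\mathfrak{q}_t), a] \to 0$, whereas the paper extends $\Psi_t$ to $\Co(0,1]\otimes A \to C$, invokes that $g(x)=x$ alone generates a dense \st-subalgebra of $\Co(0,1]$, and performs a direct computation only for $g$.

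There is, however, a genuine gap in your intermediate estimate. After inserting the telescoping identity $\mathfrak{p}_t^k - \mathfrak{q}_t^k = \sum_{j=0}^{k-1} \mathfrak{p}_t^j(\mathfrak{p}_t - \mathfrak{q}_t)\mathfrak{q}_t^{k-1-j}$ and applying the Leibniz rule to each $[\mathfrak{p}_t^j(\mathfrak{p}_t - \mathfrak{q}_t)\mathfrak{q}_t^{k-1-j}, a]$, you obtain, besides the wanted term $\mathfrak{p}_t^j[\mathfrak{p}_t - \mathfrak{q}_t, a]\mathfrak{q}_t^{k-1-j}$, the extra terms $[\mathfrak{p}_t^j, a](\mathfrak{p}_t - \mathfrak{q}_t)\mathfrak{q}_t^{k-1-j}$ and $\mathfrak{p}_t^j(\mathfrak{p}_t - \mathfrak{q}_t)[\mathfrak{q}_t^{k-1-j}, a]$, and the hypothesis $[\mathfrak{p}_t - \mathfrak{q}_t, a]\to 0$ gives no control over $[\mathfrak{p}_t, a]$ and $[\mathfrak{q}_t, a]$ separately. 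In fact, the estimate you claim is false in general: take $C = A = B = \Mat_2(\C)$, with $\mathfrak{p}_t = \operatorname{diag}(\tfrac{1}{2}, \tfrac{1}{4})$ and $\mathfrak{q}_t = \operatorname{diag}(\tfrac{1}{4}, 0)$ constant in $t$, so that $\mathfrak{p}_t - \mathfrak{q}_t = \tfrac{1}{4}\mathbf{1}$ is scalar and the hypothesis is trivially satisfied; yet for $f(x) = x(1-x) \in \Co(0,1)$ the operator $f(\mathfrak{p}_t) - f(\mathfrak{q}_t) = \operatorname{diag}(\tfrac{1}{16}, \tfrac{3}{16})$ has a nonzero, $t$-independent commutator with $a = \left(\begin{smallmatrix} 0 & 1 \\ 1 & 0 \end{smallmatrix}\right)$. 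Your argument could be repaired if the hypothesis were strengthened to the individual decays $[\mathfrak{p}_t, a] \to 0$ and $[\mathfrak{q}_t, a] \to 0$, which is what the proof of \Cref{prop:homotopy_asymptotic_morphism} actually establishes in the case of interest; but it cannot be derived from the stated hypothesis alone.
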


\begin{proof}
Since for each~\( t\in[1,\infty) \) the map
\( \Psi^{\mathfrak{p},\mathfrak{q}}_{t} \) is a difference of linear
contractions, we have 
\( \norm*{\Psi_{t}^{\mathfrak{p},\mathfrak{q}}}\leq 2 \) for
all~\( t \), and thus it suffices to check that
\( \Psi_{t}^{\mathfrak{p},\mathfrak{q}} \) is an asymptotic morphism on a dense
subalgebra of~\( \Co(0,1)\otimes A \). The maps
\( \Psi_{t}^{\mathfrak{p},\mathfrak{q}} \) extend to maps 
\( \Co(0,1]\otimes A\to C \) and 
the function~\( g(x)=x \) generates a dense \st-subalgebra of~\( \Co(0,1] \)
by the Stone--Weierstrass theorem. For~\( g \) we can estimate
\begin{gather*}
\Psi^{\mathfrak{p},\mathfrak{q}}_{t}(g\otimes
a)\Psi^{\mathfrak{p},\mathfrak{q}}_{t}(g\otimes
b)=(\mathfrak{p}_t-\mathfrak{q}_t)a(\mathfrak{p}_t-\mathfrak{q}_t)b\sim
(\mathfrak{p}_t-\mathfrak{q}_t)^{2}ab=\Psi^{\mathfrak{p},\mathfrak{q}}_{T}(g^{2}\otimes
ab),\\
(\Psi^{\mathfrak{p},\mathfrak{q}}_{t}(g\otimes a))^{*}=a^{*}(\mathfrak{p}_t-\mathfrak{q}_t)\sim (\mathfrak{p}_t-\mathfrak{q}_t)a^{*}=\Psi^{\mathfrak{p},\mathfrak{q}}_{t}(g\otimes a^{*}),
\end{gather*}
and we conclude that~\( \Psi^{\mathfrak{p},\mathfrak{q}}_{t} \) is an
asymptotic morphism on the dense \st-subalgebra generated by~\( g \), and thus
an asymptotic morphism~\( \Psi^{\mathfrak{p},\mathfrak{q}}_{t}\colon
\Co(0,1]\otimes
A\to C \). Since~\( f(\mathfrak{p}_t)a, f(\mathfrak{q}_t)a\in B \) 
for~\( f\in \Co(0,1) \), it
restricts to an asymptotic morphism
\( \Psi^{\mathfrak{p},\mathfrak{q}}_{t}\colon \Co(0,1)\otimes A\to B \).
\end{proof}

\begin{lemma}
\label{lem:error_reduction}
Let~\( f\in \Co(0,1) \) and~\( a\in\End^{*}_B(E) \). Let~\( P\in\End^{*}_B(E) \)
satisfy~\( 0\leq P\leq 1 \) and
\( (P^{2}-P)a\in\Cpt_B(E) \).
Let~\( \{u_t\}_{t\in[1,\infty)} \) be a family of positive contractions in
\( \End^*_B(E) \) such that~\( u_t\to \id_E \) \st-strictly,
and~\( u_{t}P=Pu_{t} \) for all~\( t\in [1,\infty) \). Then
\begin{equation}\label{eq:convergence_error_reduction}
    f(u_{t}P)a-f(P)a-f(u_{t})Pa\to 0.
\end{equation}
\end{lemma}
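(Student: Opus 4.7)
The plan is to verify the identity on a generating family of polynomials in \( \Co(0,1) \) and then extend by uniform approximation. Since \( u_t \) and \( P \) commute and are positive contractions, \( u_tP \) is itself self-adjoint, positive and of norm at most one; the continuous functional calculi \( f\mapsto f(u_tP),\,f(P),\,f(u_t) \) are therefore all well defined on \( \Co(0,1) \), and \( (u_tP)^k = u_t^k P^k \) for every \( k\in\N \).

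For the monomials \( e_n(x)\defeq x^n(x-1) = x^{n+1}-x^n \), \( n\geq 1 \), a direct expansion gives
\[
e_n(u_tP) - e_n(P) - e_n(u_t)P = u_t^{n+1}(P^{n+1}-P) - u_t^n(P^n-P) - (P^{n+1}-P^n).
\]
The telescoping identities \( P^{n+1}-P^n = P^{n-1}(P^2-P) \) and \( P^k-P = \sum_{j=1}^{k-1}P^{j-1}(P^2-P) \) show that each of the three bracketed operators becomes compact after right multiplication by \( a \), using the hypothesis \( (P^2-P)a\in\Cpt_B(E) \). Since \( \norm*{u_t}\leq 1 \), a short induction based on the estimate \( \norm*{u_t^m k - k}\leq m\,\norm*{u_t k - k} \) upgrades the \st-strict convergence \( u_t\to\id_E \) to norm convergence \( u_t^m k\to k \) for every compact \( k \) and every \( m\geq 1 \). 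Multiplying the displayed identity on the right by \( a \), each \( u_t^{?} \) may therefore be replaced by \( \id_E \) in the norm limit, and the resulting difference \( (P^{n+1}-P)a - (P^n-P)a - (P^{n+1}-P^n)a \) vanishes by telescoping. A dimension count shows that \( \{e_n\}_{n\geq 1} \) spans precisely the space of polynomials vanishing at both \( 0 \) and \( 1 \), so by linearity the conclusion of the lemma holds for every such polynomial.

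To pass to arbitrary \( f\in \Co(0,1) \), I would invoke Stone--Weierstrass: the polynomials vanishing at \( 0 \) and \( 1 \) are uniformly dense in \( \Co(0,1) \). Because \( u_t,P,u_tP \) all have spectra in \( [0,1] \), continuous functional calculus satisfies \( \norm*{g(X)}\leq\norm*{g}_{\infty} \) for every \( g\in\Cont([0,1]) \) and every such self-adjoint \( X \). A polynomial \( p \), vanishing at \( 0,1 \), with \( \norm*{f-p}_{\infty}<\varepsilon \) thus alters the triple \( f(u_tP)a - f(P)a - f(u_t)Pa \) by at most \( (2+\norm*{P})\norm*{a}\varepsilon \) uniformly in \( t \), and a standard \( \varepsilon/3 \) argument concludes.

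The crux of the argument is the first step: the cancellation made visible by the telescoping identity is only usable once each term \( u_t^m(P^m-P)a \) converges in norm, which in turn relies on compactness and hence on the hypothesis \( (P^2-P)a\in\Cpt_B(E) \). Without that compactness, the \st-strict convergence of \( (u_t) \) would give only strong, not norm, convergence, and the desired identity would fail.
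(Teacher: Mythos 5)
Your proof is correct and takes essentially the same approach as the paper: expand on the monomial generators \( x^n(1-x) \) (you use \( e_n = x^n(x-1) \), the negatives), exploit \( u_tP=Pu_t \) so \( (u_tP)^k = u_t^kP^k \), telescope to expose factors of \( (P^2-P)a\in\Cpt_B(E) \), and use \st-strict convergence of \( u_t \) to get norm convergence after right-multiplying by \( a \). The only cosmetic difference is that the paper groups the identity so the vanishing factors \( (u_t^n-1) \) and \( u_t^n(1-u_t) \) are explicit, while you pass to the limit term by term and cancel by telescoping at the end; both then finish with the same density-plus-uniform-bound argument.
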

\begin{proof} 
The functions~\( f_{n}(x)=x^{n}(1-x)=x^{n}-x^{n+1},\,\, n\geq 1 \) generate
\( \Co(0,1) \) (in fact,~\( n=1,2 \) suffices). Now
\begin{align*}
(f_{n}(u_{t}P)-f_{n}(P)-f_{n}(u_{t})P)a&=(u^{n}_{t}P^{n}-u^{n+1}_{t}P^{n+1}-P^{n}+P^{n+1}-(u_{t}^{n}-u^{n+1}_{t})P)a\\
&=(u^{n}_{t}-1)(P^{n}-P^{n+1})a+u^{n}_{t}(1-u_{t})(P^{n+1}-P)a,
\end{align*}
and since 
\begin{gather*}
(P^{n}-P^{n+1})a=P^{n-1}(P-P^{2})a\in\Cpt_B(E),\\
(P^{n+1}-P)a=\sum_{k=0}^{n-1}P^{k}(P^{2}-P)a\in\Cpt_B(E).
\end{gather*}
Since~\( u_t\to\id_E \) \st-strictly, it follows that~\( u_t^nT\to T \) in
norm for every~\( T\in\Cpt_B(E) \). Thus we have found the convergence in
\eqref{eq:convergence_error_reduction}.
\end{proof}

\begin{proposition}\label{prop:homotopy_asymptotic_morphism}
The asymptotic morphism
\begin{equation}\label{eq:Higson-Kasparov}\tag{HK}
    \Phi^D_t\colon\Co(0,1)\otimes A\to \Cpt_B(E),\quad
    f\otimes a\mapsto f(P_t)a
\end{equation}
is asymptotically homotopic to the Connes--Higson asymptotic morphism
\begin{equation}\label{eq:Connes-Higson}\tag{CH}
    \Phi_t^{\mathrm{CH}}\colon \Co(0,1)\otimes A\to \Cpt_B(E),\quad
    f\otimes a\mapsto f(u_t)P_1a,
\end{equation}
hence they induce the same \K-theoretic index pairing 
\( \K_1(A)\to\K_0(\Cpt_B(E)) \). 
\end{proposition}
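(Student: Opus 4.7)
The proof combines the three preceding lemmas.

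First, I invoke \Cref{lem:reduction_of_au} with the approximate unit $u_t \defeq w_t^{1/2} v_t w_t^{1/2}$, where $(v_t)$ is any approximate unit of $\mathcal{A}$: the Connes--Higson asymptotic morphism $\Phi^{\mathrm{CH}}_t$ is asymptotically equivalent to the simpler form
\[
\Psi_t\colon f\otimes a\mapsto f(w_t)P_1 a, \qquad w_t = (1+t^{-2}D^2)^{-1}.
\]
Since equivalent asymptotic morphisms are homotopic via the straight-line path, the task reduces to constructing an asymptotic homotopy between $\Phi^D$ and $\Psi$.

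The key observation for the second step is the functional-calculus identity $w_t = 4P_t(1-P_t)$, coming from $P_t^2 - P_t = -w_t/4$. Writing $g(x) \defeq 4x(1-x)$, we obtain $f(w_t) = (f\circ g)(P_t)$. Thus both asymptotic morphisms are expressed through the functional calculus of the single operator $P_t$, modulated by multiplication by $P_1$ on the Connes--Higson side. This parallelism is the input to \Cref{lem:error_reduction}, applied with $P=P_1$ (whose defect $(P_1^2-P_1)a = -w_1 a/4$ lies in $\Cpt_B(E)$ for $a\in\mathcal{A}$) and $u_t=w_t$ (positive contractions commuting with $P_1$ and converging $*$-strictly to $\id_E$). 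The resulting bridging identity
\[
f(w_tP_1)a \sim f(P_1)a + f(w_t)P_1 a
\]
rewrites $\Psi_t(f\otimes a)$ as $f(w_t P_1)a - f(P_1)a$ modulo asymptotic error, exhibiting both $\Phi^D_t$ and $\Psi_t$ as combinations of functional calculi applied to operators $(P_t, w_tP_1, P_1)$ that are mutually related through bounded transforms of~$D$.

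The homotopy is then assembled as a family of asymptotic morphisms parameterized by $s \in [0,1]$ taking values in $\Cont([0,1],\Cpt_B(E))$, whose endpoints recover $\Phi^D_t$ and $\Psi_t$. The verification that each intermediate family is genuinely an asymptotic morphism rests on \Cref{lem:difference_construction}: the pairs of contractions $(\mathfrak{p}_t, \mathfrak{q}_t)$ chosen from $\{P_t, w_tP_1, P_1\}$ satisfy $[\mathfrak{p}_t-\mathfrak{q}_t, a]\to 0$ in norm (by asymptotic commutativity of bounded transforms of $D$ with $a\in\mathcal{A}$) and $f(\mathfrak{p}_t)a, f(\mathfrak{q}_t)a\in\Cpt_B(E)$, so that the relevant difference morphisms exist. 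The main obstacle is that a naive straight-line interpolation $s\mapsto (1-s)\Phi^D_t + s\Psi_t$ fails to be an asymptotic morphism for intermediate $s$: the product formula generates quadratic cross terms whose compact error parts (containing factors such as $w_1 a$ from $(M_s^2 - M_s)a$, where $M_s = 1-s+sP_1$) do not vanish as $t\to\infty$. The refined homotopy must therefore be built in stages, using \Cref{lem:error_reduction} to cancel or rewrite each obstructing cross term and \Cref{lem:difference_construction} to maintain the asymptotic morphism property throughout. Once this homotopy is established, the equality of induced maps $\K_0(\mathrm{S}A) \to \K_0(\Cpt_B(E))$ follows from the homotopy-invariance of the K-theory map associated to an asymptotic morphism.
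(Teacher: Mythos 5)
Your preliminary reductions are correct and match the paper's: invoking \Cref{lem:reduction_of_au} to replace the Connes--Higson morphism by $f\otimes a\mapsto f(w_t)P_1a$, then using \Cref{lem:error_reduction} to rewrite this modulo asymptotic error as $f\otimes a\mapsto f(w_tP_1)a - f(P_1)a$. You also correctly recognise that a naive straight-line interpolation fails and that \Cref{lem:difference_construction} is the device that will verify the asymptotic-morphism property along a homotopy. The identity $w_t = 4P_t(1-P_t)$ is valid but plays no role in the argument.

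However, there is a genuine gap: you never actually produce the homotopy. After stating that ``the refined homotopy must therefore be built in stages, using \Cref{lem:error_reduction} to cancel or rewrite each obstructing cross term,'' the argument stops. This is precisely the hard part of the proof, and it cannot be elided. The paper's resolution is to interpolate \emph{simultaneously} in two directions: an explicit homotopy
\[
H_{t}(f\otimes a)(r) \defeq \bigl(f((rw_{t}+1-r)P_{(1-r)t+r})-f(rP_{(1-r)t+r})\bigr)a
\]
is written down, whose endpoint at $r=1$ is $f(w_tP_1)a - f(P_1)a$ and whose endpoint at $r=0$ is $f(P_t)a$ (using $f(0)=0$). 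To apply \Cref{lem:difference_construction} one must verify that $(rw_t+1-2r)P_{(1-r)t+r}$ is $A$-quasi-central \emph{uniformly in $r$}, i.e.~that the supremum over $r\in[0,1]$ of the commutator norm tends to $0$. This is a nontrivial uniform estimate: the paper splits the commutator into three pieces and controls each with the quantitative bounds of \Cref{prop:commutator_sqrt_ctst,prop:commutator_Ds_Ft}, the last of which requires a fractional weight $\abs{D}^s$ to handle the term $r(1-w_t)[P_{(1-r)t+r},a]$ which does not decay without it. Your proposal does not construct this homotopy, nor does it anticipate the uniformity problem or the need for the weighted commutator estimate, so as written it is an outline of intent rather than a proof.
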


\begin{proof}
By \Cref{lem:reduction_of_au}, the Connes--Higson asymptotic morphism
\eqref{eq:Connes-Higson} is
equivalent to~\( f\otimes a\mapsto f(w_{t})P_{1}a \). 
For each~\( t\in [1,\infty) \) we define the following families of maps 
\( \Co(0,1)\otimes A\to\Cpt_B(E) \):
\begin{align}
f\otimes a&\longmapsto f(w_t)P_1a, \label{eq:asymptotic_1}\tag{a1} \\
f\otimes a&\longmapsto f(w_tP_1)a-f(P_1)a,
\label{eq:asymptotic_2}\tag{a2}\\
f\otimes a&\longmapsto f(P_t)a, \label{eq:asymptotic_3}\tag{a3}
\end{align}
Then all of them give asymptotic morphisms. For \eqref{eq:asymptotic_1} and
\eqref{eq:asymptotic_3} this is well known. For \eqref{eq:asymptotic_2},
this follows from \Cref{lem:difference_construction}. It follows from
\Cref{lem:error_reduction} that \eqref{eq:asymptotic_1} is asymptotically
equivalent \eqref{eq:asymptotic_2}.

The proof will be complete if we show that \eqref{eq:asymptotic_2} is
homotopic to \eqref{eq:asymptotic_3}. To this end, 
we consider the map~\( [1,\infty)\times
[0,1]\to \End^{*}_B(E) \)
\[(t,r)\mapsto (rw_{t}+1-r)P_{(1-r)t+r}, \quad (t,r)\mapsto rP_{(1-r)t+r},\]
which is norm-continuous. Thus, we have a family of maps
\[H_{t}\colon \Co(0,1)\otimes A\to\Cont([0,1],\Cpt_B(E)),\]
given by
\[H_{t}(f\otimes a)(r)\defeq
f((rw_{t}+1-r)P_{(1-r)t+r})-f(rP_{(1-r)t+r}))a,\]
such that for each~\( f\otimes a \), the map~\( t\mapsto H_{t}(f\otimes a) \) is
continuous. To prove that~\( H_{t} \) is an asymptotic morphism, we will use
\Cref{lem:difference_construction}, so we have to show that
\[(rw_{t}+1-r)P_{(1-r)t+r}-rP_{(1-r)t+r}=(rw_{t}+1-2r)P_{(1-r)t+r},\]
is~\( A \)-quasi-central for the sup-norm on~\( \Cont([0,1],\Cpt_B(E)) \), that is,
\begin{equation}
\label{eq:sup_norm_limit}
\lim_{t\to\infty}\sup_{r\in[0,1]}\norm*{\left[(rw_{t}+1-2r)P_{(1-r)t+r}\,,\,a\right]}=0,\quad
\text{for every~\( a\in A \).}
\end{equation}
To this end, for each~\( a\in A \) and every~\( r\in [0,1] \) we have
\begin{multline*}
[(rw_{t}+1-2r)P_{(1-r)t+r}\,,\,a]\\
=r[w_{t},a]P_{(1-r)t+r}+(1-r)[P_{(1-r)t+r}\,,\,a]+r(w_{t}-1)[P_{(1-r)t+r}\,,\,a].
\end{multline*}
For the first summand, applying
\Cref{prop:commutator_sqrt_ctst} with~\( s=1 \) gives
\[\norm*{r[w_{t},a]P_{(1-r)t+r}}\leq 2t^{-1}\norm{[D,a]}.\]
For the second summand, we observe that~\( [P_{t},a]=\frac{1}{2}[F_{t},a] \).
Setting~\( s=0 \) in \Cref{prop:commutator_Ds_Ft} gives  
\begin{align*}
\norm*{(1-r)[P_{(1-r)t+r},a]}\leq
\frac{3}{2}\norm{[D,a]}(1-r)((1-r)t+r)^{-1}\leq
\frac{3}{2}t^{-1}\norm{[D,a]}.
\end{align*}
For the last term, observe that~\( 1-w_{t}=t^{-2}D^{2}(1+t^{-2}D^{2})^{-1} \)
and for any~\( 0<s<1 \), \Cref{prop:commutator_Ds_Ft} now yields
\begin{align*}
\norm*{r(1-w_{t})[P_{(1-r)t+r},a]}&=rt^{-s}\norm*{t^{-2+s}\abs{D}^{2-s}(1+t^{-2}D^{2})^{-1}\abs{D}^s[P_{(1-r)t+r},a]}\\
&\leq t^{-s}C_s\norm{[D,a]}((1-r)t+r)^{s-1}\leq t^{-s}C_s\norm{[D,a]},
\end{align*}
where
\[ 
    C_s\defeq
    1+\frac{2\Gamma(\tfrac{1-s}{2})}{\sqrt{\pi}\Gamma(\tfrac{2-s}{2})}.
\]
Thus, the convergence in \eqref{eq:sup_norm_limit} is established and~\( H_{t} \) gives the desired homotopy.     
\end{proof}

Thus we have proven the following result:

\begin{theorem}\label{thm:Phi^D}
Let~\( (\mathcal{A},E,D) \) be an essential odd 
unbounded Kasparov~\( A \)-\( B \)--module.
Then the asymtotic morphism
\[ \Phi^{D}_{t}\colon\Co(\mathbb{R})\otimes A\to \Cpt_B(E),\quad 
f\otimes a\mapsto f(t^{-1}D)a \]
represents the class of the image of~\( [D]\in\KK_1(A,B) \) in~\( \E_1(A,B)
\) under the natural transformation~\( \KK_1\to\E_1 \). In particular,
\( \Phi^D_t \) induces the same map 
\( \K_0(\mathrm{S}A)\to\K_0(\Cpt_B(E)) \) as~\(
[D]\in\KK_1(A,B) \), up to an isomorphism 
\( \K_1(A)\simeq\K_0(\mathrm{S}A) \).   
\end{theorem}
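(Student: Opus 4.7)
The plan is to derive this theorem directly from Proposition~\ref{prop:homotopy_asymptotic_morphism} together with Lemma~\ref{lem:Connes-Higson_extension_to_E}; the only additional observation needed is that the two versions of \( \Phi^D_t \) appearing in the excerpt---one with domain \( \Co(\R)\otimes A \) and one with domain \( \Co(0,1)\otimes A \)---are intertwined by the pullback along a canonical homeomorphism \( \R\xrightarrow{\sim}(0,1) \).

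Concretely, I would set \( \phi(x)\defeq\tfrac{1}{2}\bigl(1+x(1+x^2)^{-\sfrac{1}{2}}\bigr) \). This is a homeomorphism \( \R\to(0,1) \), and by functional calculus \( \phi(t^{-1}D)=P_t \), where \( P_t \) is the operator of Proposition~\ref{prop:homotopy_asymptotic_morphism}. The pullback \( \phi^*\colon\Co(0,1)\to\Co(\R) \), \( g\mapsto g\circ\phi \), is a \st-isomorphism, and for all \( g\in\Co(0,1) \) and \( a\in A \) one has
\[
\Phi^D_t\bigl(\phi^*(g)\otimes a\bigr)=(g\circ\phi)(t^{-1}D)\,a=g(P_t)\,a.
\]
Hence, after transport along this \st-isomorphism, the asymptotic morphism of the theorem is precisely the Higson--Kasparov representative \eqref{eq:Higson-Kasparov}.

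The remainder is then just an invocation of existing results. Proposition~\ref{prop:homotopy_asymptotic_morphism} supplies an explicit asymptotic homotopy from the Higson--Kasparov form to the Connes--Higson asymptotic morphism \( \Phi^\mathrm{CH}_t \), so they define the same class in \( \E_1 \). By Lemma~\ref{lem:Connes-Higson_extension_to_E} together with the Connes--Higson theorem, \( [\Phi^\mathrm{CH}_t] \) is precisely the image of the extension class \eqref{eq:Kasparov_KK_to_extension}---that is, of \( [D]\in\KK_1(A,B) \)---under the natural transformation \( \KK_1\to\E_1 \). The ``in particular'' statement about K-theory follows from the compatibility of the E-theory product \eqref{eq:E_index_pairing} with the Kasparov product \eqref{eq:KK_index_pairing}.

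Because the genuinely technical step---the asymptotic homotopy of Proposition~\ref{prop:homotopy_asymptotic_morphism}---is already handled, no real obstacle remains; the theorem is essentially a change of parametrisation of the domain. The only point requiring care is orientation: one must verify that the homeomorphism \( \phi \) is chosen so that the induced isomorphism \( \K_0(\Co(0,1)\otimes A)\simeq\K_0(\mathrm{S}A) \) yields the same sign of the suspension isomorphism \( \K_1(A)\simeq\K_0(\mathrm{S}A) \) fixed in \Cref{lem:odd_K-theory}. This is straightforward bookkeeping.
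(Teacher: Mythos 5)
Your proof is essentially the same as the paper's: pull back the Higson--Kasparov asymptotic morphism \eqref{eq:Higson-Kasparov} along a homeomorphism \( \R\xrightarrow{\sim}(0,1) \) that implements \( t^{-1}D\mapsto P_t \) under functional calculus, then invoke \Cref{prop:homotopy_asymptotic_morphism} and \Cref{lem:Connes-Higson_extension_to_E}. In fact your formula \( \phi(x)=\tfrac{1}{2}\bigl(1+x(1+x^2)^{-\sfrac{1}{2}}\bigr) \) is the one that actually satisfies \( \phi(t^{-1}D)=P_t \) and has range \( (0,1) \); the paper's written \( \chi(x)=x(1+x^2)^{-\sfrac{1}{2}} \) is a slip (its range is \( (-1,1) \)), so your version corrects it. Your closing remark on verifying the orientation of the identification \( \K_0(\Co(0,1)\otimes A)\simeq\K_0(\mathrm{S}A) \) is a sound extra caution, though as you say it is routine bookkeeping.
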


\begin{proof}
Note that
\[
    \chi\colon \R\to(0,1),\quad \chi(x)\defeq x(1+x^2)^{-\sfrac{1}{2}}
\]
is monotonically increasing and continuous. Thus it 
induces an isomorphism of \Cst-algebras 
\( \Co(0,1)\simeq\Co(\R) \). The
claim follows from the fact that \eqref{eq:Higson-Kasparov} is the
pullback of \eqref{eq:asymptotic_3} along~\( \chi \). 
\end{proof}

\section{Some commutator estimates}
\label[appendix]{app:commutator_estimates}
Let~\( (\mathcal{A},E,D) \) be an odd unbounded Kasparov
module and let~\( a\in\mathcal{A}\). 
Then~\( [D,a] \) extends to a bounded, adjointable operator on~\( E \).
We shall provide the following commutator estimates in
\Cref{prop:commutator_ct_st,prop:commutator_sqrt_ctst,prop:commutator_Ds_Ft}.

\begin{enumerate}
\item Let~\( c(x),s(x)\colon \R\to(0,1) \) be functions as in
\eqref{eq:choice_cs}:
\[
    c(x)\defeq\sqrt{\frac{1}{2}-\frac{1}{2}x(1+x^2)^{-\sfrac{1}{2}}},\quad 
    s(x)\defeq\sqrt{\frac{1}{2}+\frac{1}{2}x(1+x^2)^{-\sfrac{1}{2}}},
\]
and let~\( c_t\defeq c(t^{-1}D) \),~\( s_t\defeq c(t^{-1}D) \). Then the
following estimate holds:  
\[ 
    \norm*{[c_t,a]}\leq\frac{1}{2}t^{-1}\norm*{[D,a]},\quad
    \norm*{[s_t,a]}\leq\frac{1}{2}t^{-1}\norm*{[D,a]},\quad
    \norm*{[\sqrt{c_ts_t},a]}\leq
    2t^{-1}\norm*{[D,a]}.
\]
\item Let~\( F_t\defeq t^{-1}D(1+t^{-2}D^2)^{-\sfrac{1}{2}} \). Then for
all~\( 0\leq s<1 \),~\( [F_t,a] \) maps~\( E \) into~\( \dom\abs{D}^s \)
and the following estimate holds:
\[\norm*{\abs{D}^{s}[F_{t},a]}\leq C_s t^{s-1} \norm*{[D,a]},\]
where
\[ 
    C_s\defeq
    1+\frac{2\Gamma(\tfrac{1-s}{2})}{\sqrt{\pi}\Gamma(\tfrac{2-s}{2})}.
\]
Here~\( \Gamma(\alpha) \) is the Gamma function
\eqref{eq:Gamma_function}. 
\end{enumerate}

Then (1) completes the proof of \Cref{thm:choice_cs}, and 
(2) completes the proof of \Cref{prop:homotopy_asymptotic_morphism} in
\Cref{app:asymptotic_morphism}.

Our main tool is the following lemma:

\begin{lemma}[\citelist{\cite{G-Bondia_Varilly_Joseph_Figueroa:Elements_of_NCG}*{Lemma
10.15}; \cite{Bratelli-Robinson:Operator_algebras_quantum_statistics_1}*{Theorem
3.2.32}}]\label{lem:commutator_estimate}
Let~\( B \) be a \Cst-algebra and 
\( \mathcal{E} \) be a Hilbert~\( B \)-module.  
Let~\( a\in\End^*_B(\mathcal{E}) \) and~\( \mathcal{D} \)
be a self-adjoint, regular, possibly unbounded operator on~\( \mathcal{E} \)
such that~\( [\mathcal{D},a] \) extends to an element in~\( \End^*_B(\mathcal{E}) \).
Then:
\begin{enumerate}
\item \(\norm*{[\mathrm{e}^{\mathrm{i}\mathcal{D}},a]}\leq\norm*{[\mathcal{D},a]}\).
\item Let~\( f \) be a function on~\( \R \) with Fourier transform
\( \widehat{f} \), such that
\[
C_f\defeq
\frac{1}{2\pi}\int_{-\infty}^{\infty}\abs*{\xi\widehat{f}(\xi)}\,\mathrm{d}\xi
=\frac{1}{2\pi}\int_{-\infty}^{\infty}\abs*{\widehat{f'}(\xi)}\,\mathrm{d}\xi
<+\infty.
\]
Then~\( \norm*{[f(\mathcal{D}),a]} \) is bounded and satisfies the estimate
\[ 
    \norm*{[f(\mathcal{D}),a]}\leq C_f\cdot\norm*{[\mathcal{D},a]}.
\]
\end{enumerate}
\end{lemma}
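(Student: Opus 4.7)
The plan is to reduce (2) to (1) via Fourier inversion, and to prove (1) by a Duhamel-type identity for the one-parameter unitary group generated by~\( \mathcal{D} \).

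For (1), the starting point is that since~\( \mathcal{D} \) is self-adjoint and regular on~\( \mathcal{E} \), Stone's theorem for Hilbert~\( \Cst \)-modules (as developed in Lance's book) produces a strictly continuous one-parameter group~\( (\mathrm{e}^{\mathrm{i}t\mathcal{D}})_{t\in\R} \) of unitaries in~\( \End^*_B(\mathcal{E}) \), which differentiates to~\( \mathrm{i}\mathcal{D} \) on~\( \dom \mathcal{D} \). Since \( [\mathcal{D},a] \) is assumed to extend to a bounded adjointable operator, \( a \) preserves \( \dom \mathcal{D} \). For~\( \xi\in\dom \mathcal{D} \), I would consider the path~\( t\mapsto \mathrm{e}^{\mathrm{i}(1-t)\mathcal{D}}\, a\, \mathrm{e}^{\mathrm{i}t\mathcal{D}}\xi \) on~\( [0,1] \). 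Its derivative is~\( \mathrm{i}\,\mathrm{e}^{\mathrm{i}(1-t)\mathcal{D}}[\mathcal{D},a]\mathrm{e}^{\mathrm{i}t\mathcal{D}}\xi \), and integrating from~\( 0 \) to~\( 1 \) yields the Duhamel identity
\[
[\mathrm{e}^{\mathrm{i}\mathcal{D}},a]\xi = \mathrm{i}\int_{0}^{1}\mathrm{e}^{\mathrm{i}(1-t)\mathcal{D}}[\mathcal{D},a]\mathrm{e}^{\mathrm{i}t\mathcal{D}}\xi\,\mathrm{d}t.
\]
The bound in (1) then follows from the triangle inequality for Bochner integrals and the fact that each~\( \mathrm{e}^{\mathrm{i}t\mathcal{D}} \) is a unitary.

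For (2), I would use the Fourier inversion formula. The hypothesis~\( C_{f}<\infty \) guarantees that~\( \widehat{f} \) is integrable (with suitable decay), so that the Bochner integral
\[
f(\mathcal{D}) = \frac{1}{2\pi}\int_{-\infty}^{\infty}\widehat{f}(\xi)\,\mathrm{e}^{\mathrm{i}\xi \mathcal{D}}\,\mathrm{d}\xi
\]
converges in the strict topology of~\( \End^*_B(\mathcal{E}) \) and agrees with the functional calculus of~\( \mathcal{D} \). Commuting~\( a \) past this integral and applying (1) to the operator~\( \xi \mathcal{D} \), which is again self-adjoint regular with~\( [\xi \mathcal{D},a]=\xi[\mathcal{D},a] \), gives the pointwise bound~\( \norm{[\mathrm{e}^{\mathrm{i}\xi \mathcal{D}},a]}\leq \abs{\xi}\cdot\norm{[\mathcal{D},a]} \). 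Inserting this into the integral yields
\[
\norm*{[f(\mathcal{D}),a]} \leq \frac{1}{2\pi}\int_{-\infty}^{\infty}\abs*{\widehat{f}(\xi)}\cdot\abs{\xi}\,\mathrm{d}\xi\cdot \norm*{[\mathcal{D},a]} = C_{f}\cdot \norm*{[\mathcal{D},a]}.
\]

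The main obstacle is not the computation but the technical justification of the Bochner integrals and the commutation of~\( a \) with them in the Hilbert~\( \Cst \)-module setting, where strict continuity of~\( t\mapsto \mathrm{e}^{\mathrm{i}t\mathcal{D}} \) replaces strong continuity. Once Stone's theorem for regular operators (and the strict topology continuity of the exponential) is accepted as in Lance's framework, all steps above pass verbatim from the Hilbert space case; the remaining care is merely in verifying that the Duhamel path indeed has derivative in the norm sense on~\( \dom \mathcal{D} \), and that the Fourier integral representation of~\( f(\mathcal{D}) \) agrees with the functional calculus on the self-adjoint regular operator~\( \mathcal{D} \).
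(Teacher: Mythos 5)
Your proof is correct, and it follows the standard Duhamel-plus-Fourier-inversion argument used in the references the paper cites (Gracia-Bondía--Várilly--Figueroa, Lemma~10.15, and Bratteli--Robinson, Theorem~3.2.32); the paper itself does not reproduce a proof but relies on those sources. The only points that deserve an extra word of care are that the hypothesis ``\( [\mathcal{D},a] \) extends to an element of \( \End^*_B(\mathcal{E}) \)'' is conventionally taken to \emph{include} the condition \( a\,\dom\mathcal{D}\subseteq\dom\mathcal{D} \) rather than to imply it, and that the Fourier-inversion representation \( f(\mathcal{D})=\tfrac{1}{2\pi}\int\widehat f(\xi)\mathrm e^{\mathrm i\xi\mathcal{D}}\,\mathrm d\xi \) requires \( \widehat f\in L^1 \) (not merely \( \xi\widehat f(\xi)\in L^1 \)), which is the implicit regularity assumption under which the cited lemma is stated.
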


To compute the commutators~\( [c_t,a] \) and~\( [s_t,a] \), we will use the
following trigonometric substitutions.

\begin{lemma}
Define the function
\( u\colon \R\to\left[0,\pi\right] \) by~\(u(x)\defeq 
\frac{\pi}{2}+\arctan(x)\).
Then~\( c(x)=\cos\left(\frac{1}{2}u(x)\right) \) and~\(
s(x)=\sin\left(\frac{1}{2}u(x)\right) \).  
\end{lemma}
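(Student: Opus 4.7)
The plan is to verify both identities by a direct computation with the half-angle formulas, after first rewriting $\cos(u(x))$ in a convenient form.

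First I would note that since $\arctan$ takes values in $(-\pi/2,\pi/2)$, the function $u(x)=\pi/2+\arctan(x)$ takes values in $(0,\pi)$, so $u(x)/2 \in (0,\pi/2)$. In this range both $\cos(u(x)/2)$ and $\sin(u(x)/2)$ are nonnegative, matching the nonnegativity of $c(x)$ and $s(x)$. Next, using the shift identity
\[
\cos(u(x)) = \cos\bigl(\tfrac{\pi}{2}+\arctan(x)\bigr) = -\sin(\arctan(x)) = -\frac{x}{\sqrt{1+x^2}},
\]
which is the standard evaluation of $\sin\circ\arctan$.

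Then I would apply the half-angle formulas:
\[
\cos^2\!\bigl(\tfrac{1}{2}u(x)\bigr) = \frac{1+\cos(u(x))}{2} = \frac{1}{2}-\frac{1}{2}\,\frac{x}{\sqrt{1+x^2}} = c(x)^2,
\]
\[
\sin^2\!\bigl(\tfrac{1}{2}u(x)\bigr) = \frac{1-\cos(u(x))}{2} = \frac{1}{2}+\frac{1}{2}\,\frac{x}{\sqrt{1+x^2}} = s(x)^2.
\]
Taking the positive square roots (legitimate in both cases by the range argument above) yields $c(x)=\cos(\tfrac{1}{2}u(x))$ and $s(x)=\sin(\tfrac{1}{2}u(x))$, as claimed.

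There is no real obstacle here; the proof is just a half-angle identity combined with the image of $\arctan$. The only subtlety worth flagging is the sign choice when extracting the square root, which is settled by observing that $u(x)/2\in[0,\pi/2]$.
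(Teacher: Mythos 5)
Your proof is correct and follows essentially the same route as the paper's: compute $\cos\bigl(\tfrac{\pi}{2}+\arctan x\bigr)=-x/\sqrt{1+x^2}$, then apply the half-angle formulas and take the nonnegative square root. You are slightly more explicit than the paper about why the nonnegative root is the correct choice (namely $u(x)/2\in(0,\pi/2)$), but this is the same argument.
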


\begin{proof}
Recall that for~\( \theta\in[0,\pi] \), we have the following half-angle
formulae of trigonometric functions:
\[ 
    \cos\left(\frac{\theta}{2}\right)=\sqrt{\frac{1+\cos(\theta)}{2}},\quad
    \sin\left(\frac{\theta}{2}\right)=\sqrt{\frac{1-\cos(\theta)}{2}},\quad
    \text{for~\( \theta\in[0,\pi] \).} 
\]
Also,
\[ 
    \cos\left(\frac{\pi}{2}+\arctan x\right)=-\sin(\arctan
    x)=-\frac{x}{\sqrt{1+x^2}}.
\]
Thus we have
\begin{align*}
\cos\left(\frac{1}{2}u(x)\right)&=\sqrt{\frac{1+\cos(\frac{\pi}{2}+\arctan(x))}{2}}
=\sqrt{\frac{1}{2}-\frac{1}{2}x(1+x^2)^{-\sfrac{1}{2}}},\\
\sin\left(\frac{1}{2}u(x)\right)&=\sqrt{\frac{1-\cos(\frac{\pi}{2}+\arctan(x))}{2}}
=\sqrt{\frac{1}{2}+\frac{1}{2}x(1+x^2)^{-\sfrac{1}{2}}}.\qedhere
\end{align*}
\end{proof}

Let~\( u_t\defeq u(t^{-1}D)=\frac{\pi}{2}+\arctan t^{-1}D \). Then the
preceding lemma says that~\( c_t=\cos(\frac{1}{2}u_t) \) and~\(
s_t=\cos(\frac{1}{2}u_t) \).   

\begin{lemma}\label{lem:commutator_ut}
We have the following estimate\textup{:}
\[ 
    \norm*{[u_t,a]}\leq t^{-1}\norm*{[D,a]}.
\]
\end{lemma}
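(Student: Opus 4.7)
The plan is to invoke part~(2) of the commutator estimate
(\Cref{lem:commutator_estimate}) applied to the single-variable function
\( f(x)\defeq\tfrac{\pi}{2}+\arctan(t^{-1}x) \), so that by functional
calculus \( u_t=f(D) \). The additive constant~\( \tfrac{\pi}{2} \) commutes
with~\( a \) and is irrelevant to the commutator, so everything reduces to
estimating the Fourier constant
\[
    C_f=\frac{1}{2\pi}\int_{\R}\abs*{\widehat{f'}(\xi)}\,\mathrm{d}\xi.
\]

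Differentiating gives~\( f'(x)=t^{-1}(1+t^{-2}x^2)^{-1} \), which is
a scaled Cauchy/Poisson kernel. Starting from the classical identity
\( \widehat{(1+x^2)^{-1}}(\xi)=\pi\mathrm{e}^{-\abs*{\xi}} \) and the
dilation rule \( \widehat{g(t^{-1}\cdot)}(\xi)=t\,\widehat{g}(t\xi) \),
one computes \( \widehat{f'}(\xi)=\pi\mathrm{e}^{-t\abs*{\xi}} \); it is
nonnegative so absolute values disappear and
\[
    C_f=\frac{1}{2\pi}\int_{-\infty}^{+\infty}\pi\mathrm{e}^{-t\abs*{\xi}}\,\mathrm{d}\xi
    =t^{-1}.
\]

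\Cref{lem:commutator_estimate}(2) then yields
\( \norm*{[u_t,a]}=\norm*{[f(D),a]}\leq C_f\norm*{[D,a]}=t^{-1}\norm*{[D,a]} \),
which is the desired bound. There is essentially no obstacle: the proof is a
one-line application of the commutator estimate once the Fourier transform
of the Cauchy kernel is recalled and rescaled. An alternative route, in case
one prefers to avoid the explicit Fourier transform, would be to represent
\( \arctan \) as the Laplace-type integral
\( \arctan(t^{-1}D)=\int_0^{\infty}\!\mathrm{e}^{-s}\sin(st^{-1}D)s^{-1}\,\mathrm{d}s \)
and bound each commutator \( [\sin(st^{-1}D),a] \) by part~(1) of the same
lemma, producing the same constant \( t^{-1} \) after integration.
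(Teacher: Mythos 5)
Your proof is correct and is essentially the paper's proof: both apply \Cref{lem:commutator_estimate}(2) with the derivative of the shifted arctangent being the Cauchy kernel whose Fourier transform is $\pi\mathrm{e}^{-\abs{\xi}}$. The only (cosmetic) difference is that you absorb the dilation into the function, getting $C_f=t^{-1}$ with $\mathcal{D}=D$, whereas the paper keeps $u$ unscaled, gets $C_u=1$, and applies the lemma with $\mathcal{D}=t^{-1}D$.
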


\begin{proof}
Note that~\( u'(x)=\frac{1}{1+x^2} \) and
\[
    \widehat{\frac{1}{1+x^2}}(\xi)=\pi\mathrm{e}^{-\abs*{\xi}}.
\]
Thus
\[
C_u
\defeq\frac{1}{2\pi}\int_{-\infty}^{\infty}\abs*{\xi\widehat{u}(\xi)}\,\mathrm{d}\xi
=\frac{1}{2\pi}\int_{-\infty}^{\infty}\abs*{\widehat{u'}(\xi)}\,\mathrm{d}\xi
=\frac{1}{2\pi}\int_{-\infty}^{+\infty}\pi\mathrm{e}^{-\abs*{\xi}}\mathrm{d}\xi=\int_{0}^{+\infty}\mathrm{e}^{-\xi}\mathrm{d}\xi
=1.
\]
So we obtain
\[ 
    \norm*{[u_t,a]}\leq
    C_u\cdot\norm{[t^{-1}D,a]}=t^{-1}\norm*{[D,a]}.\qedhere
\]
\end{proof}

\begin{proposition}\label{prop:commutator_ct_st}
We have the following estimates\textup{:}
\[ 
    \norm{[c_t,a]}\leq \frac{1}{2}t^{-1}\norm*{[D,a]},\quad 
    \norm{[s_t,a]}\leq \frac{1}{2}t^{-1}\norm*{[D,a]}
\]
\end{proposition}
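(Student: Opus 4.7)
The plan is to combine the trigonometric identification
\( c_t=\cos(\tfrac{1}{2}u_t) \) and \( s_t=\sin(\tfrac{1}{2}u_t) \)
with the norm estimate \( \norm*{[u_t,a]}\leq t^{-1}\norm*{[D,a]} \)
from \Cref{lem:commutator_ut}, using the exponential decomposition to
convert a commutator with a trigonometric function of a self-adjoint
operator into commutators with unitaries, for which
\Cref{lem:commutator_estimate}(1) directly applies.

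More explicitly, since \( u_t \) is self-adjoint and regular with
spectrum contained in \( [0,\pi] \), functional calculus gives
\[
    c_t=\tfrac{1}{2}\bigl(\mathrm{e}^{\mathrm{i}u_t/2}+\mathrm{e}^{-\mathrm{i}u_t/2}\bigr),
    \qquad
    s_t=\tfrac{1}{2\mathrm{i}}\bigl(\mathrm{e}^{\mathrm{i}u_t/2}-\mathrm{e}^{-\mathrm{i}u_t/2}\bigr).
\]
Thus the commutators \( [c_t,a] \) and \( [s_t,a] \) each split as a sum
of two terms of the form \( [\mathrm{e}^{\pm\mathrm{i}u_t/2},a] \).
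Applying \Cref{lem:commutator_estimate}(1) with \( \mathcal{D}=\pm\tfrac{1}{2}u_t \)
yields
\[
    \norm*{[\mathrm{e}^{\pm\mathrm{i}u_t/2},a]}\leq
    \norm*{[\tfrac{1}{2}u_t,a]}=\tfrac{1}{2}\norm*{[u_t,a]}.
\]

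The final step is then to apply the triangle inequality and invoke
\Cref{lem:commutator_ut}, giving
\[
    \norm*{[c_t,a]}\leq\tfrac{1}{2}\bigl(\norm*{[\mathrm{e}^{\mathrm{i}u_t/2},a]}+\norm*{[\mathrm{e}^{-\mathrm{i}u_t/2},a]}\bigr)
    \leq \tfrac{1}{2}\norm*{[u_t,a]}\leq \tfrac{1}{2}t^{-1}\norm*{[D,a]},
\]
and similarly for \( s_t \). I do not foresee a substantive obstacle:
\Cref{lem:commutator_estimate}(1) is already the right tool to avoid the
usual difficulty that \( \cos(\theta/2) \) does not have an integrable
Fourier derivative, and the passage through \( u_t \) both supplies the
factor \( t^{-1} \) and keeps the constants sharp (the factor \( 1/2 \)
coming from the half-angle).
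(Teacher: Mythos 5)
Your proof is correct and takes essentially the same approach as the paper: the same trigonometric substitution \( c_t=\cos(\tfrac{1}{2}u_t) \), \( s_t=\sin(\tfrac{1}{2}u_t) \), the same decomposition into \( \mathrm{e}^{\pm\mathrm{i}u_t/2} \), the same application of \Cref{lem:commutator_estimate}(1) to \( \mathcal{D}=\pm\tfrac{1}{2}u_t \), followed by the triangle inequality and \Cref{lem:commutator_ut}. The constants match exactly.
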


\begin{proof}
We have
\begin{align*} 
c_t&=\cos\left(\frac{1}{2}u_t\right)
=\frac{1}{2}\left(\mathrm{e}^{\frac{\mathrm{i}}{2}u_t}+
\mathrm{e}^{-\frac{\mathrm{i}}{2}u_t}\right), \\
s_t&=\sin\left(\frac{1}{2}u_t\right)
=\frac{1}{2\mathrm{i}}\left(\mathrm{e}^{\frac{\mathrm{i}}{2}u_t}-
\mathrm{e}^{-\frac{\mathrm{i}}{2}u_t}\right).
\end{align*}
Since the operator~\( \tfrac{1}{2}u_t \) is self-adjoint, we may apply
\Cref{lem:commutator_estimate} to it, which gives
\begin{multline*} 
\norm{[c_t,a]}\leq
\frac{1}{2}\norm{[\mathrm{e}^{\tfrac{\mathrm{i}}{2}u_t},a]}+
\frac{1}{2}\norm{[\mathrm{e}^{-\tfrac{\mathrm{i}}{2}u_t},a]}
\leq\frac{1}{4}\left(\norm*{\left[
u_t,a\right]}+\norm*{\left[u_t,a\right]}\right) \\
\leq \frac{1}{2}\norm*{[u_t,a]}\leq \frac{1}{2}t^{-1}\norm*{[D,a]}.
\end{multline*}
The same estimate holds for~\( [s_t,a] \) as well. 
\end{proof}

The following results on special functions, 
specifically, Beta functions and Gamma
functions, are needed for computing the
constant factors in the two estimates below. Recall that
(cf.~\cite{Andrews-Askey-Roy:Special_functions}*{Definition 1.1.3}) the
\emph{Beta function}~\( \operatorname{B}(\alpha,\beta) \) is defined as the
integral
\[ 
\operatorname{B}(\alpha,\beta)\defeq \int_{0}^1
t^{\alpha-1}(1-t)^{\beta-1}\,\mathrm{d}t,\quad\text{for~\(\alpha,\beta\in\C\) with~\(\Re
\alpha>0,\,\Re\beta>0 \)};
\]
and the \emph{Gamma function}~\( \Gamma(\alpha) \) is defined as
(cf.~\cite{Andrews-Askey-Roy:Special_functions}*{Corollary 1.1.5}):
\begin{equation}\label{eq:Gamma_function}
    \Gamma(\alpha)\defeq
    \int_{0}^{\infty}t^{\alpha-1}e^{-t}\,\mathrm{d}t,\quad\text{for~\(
    \alpha\in\C \) with~\( \Re \alpha>0 \).} 
\end{equation}

The Gamma function satisfies the following \emph{Euler's reflection
formula} (cf.~\cite{Andrews-Askey-Roy:Special_functions}*{Theorem 1.2.1}):
\begin{equation}\label{eq:Euler_reflection}
\Gamma(\alpha)\Gamma(1-\alpha)=\frac{\pi}{\sin\pi\alpha}. 
\end{equation}
In particular, setting~\( \alpha=\tfrac{1}{2} \) yields~\(
\Gamma(\tfrac{1}{2})=\sqrt{\pi} \). 

The relation between the Beta function and the Gamma function is
well-known to be (cf.~\cite{Andrews-Askey-Roy:Special_functions}*{Theorem
1.1.4}):
\[ 
    \operatorname{B}(\alpha,\beta)=\frac{\Gamma(\alpha)\Gamma(\beta)}{\Gamma(\alpha+\beta)}.
\]
We substitute~\( \lambda=\frac{t}{1-t} \) in the expression of the Beta
function to get
\begin{equation}\label{eq:Beta_function}
\begin{aligned}
\operatorname{B}(\alpha,\beta)&=\int_{0}^{\infty}\left(\frac{\lambda}{\lambda+1}\right)^{\alpha-1}\left(\frac{1}{\lambda+1}\right)^{\beta-1}\frac{1}{(\lambda+1)^2}\,\mathrm{d}\lambda\\
&=\int_{0}^{\infty}\lambda^{\alpha-1}(\lambda+1)^{-\alpha-\beta}\,\mathrm{d}\lambda.  
\end{aligned}
\end{equation}

\begin{proposition}\label{prop:commutator_sqrt_ctst}
For all~\( 0<s\leq 1 \), the following estimate holds\textup{:}
\[ 
    \norm{[(1+t^{-2}D^2)^{-s},a]}\leq 2t^{-1}\norm{[D,a]}.
\]
As a consequence, we have
\[ 
    [\sqrt{c_ts_t},a]\leq 2t^{-1}\norm*{[D,a]}.
\]
\end{proposition}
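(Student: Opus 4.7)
The plan is to represent the fractional power \( (1+t^{-2}D^2)^{-s} \) as an integral of resolvents, then reduce the commutator \( [(1+t^{-2}D^2)^{-s}, a] \) to \( [D,a] \) via the resolvent identity. Writing \( W_t \defeq 1+t^{-2}D^2 \geq \id_E \), the classical identity
\[ y^{-s} = \frac{\sin\pi s}{\pi}\int_0^{\infty}\lambda^{-s}(y+\lambda)^{-1}\,\mathrm{d}\lambda,\qquad y\geq 1,\ 0<s<1, \]
which follows from \eqref{eq:Beta_function} and \eqref{eq:Euler_reflection} via the rescaling \( \lambda\mapsto y\lambda \), extends by Borel functional calculus applied to the bounded positive operator \( W_t^{-1} \) to
\[ W_t^{-s} = \frac{\sin\pi s}{\pi}\int_0^{\infty}\lambda^{-s}(W_t+\lambda)^{-1}\,\mathrm{d}\lambda, \]
the integral converging in operator norm since \( \norm{(W_t+\lambda)^{-1}}\leq (1+\lambda)^{-1} \) and \( \lambda^{-s}(1+\lambda)^{-1} \) is integrable on \( (0,\infty) \) for \( s\in(0,1) \).

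Next I would commute \( a \) past the integral. The resolvent identity together with \( [W_t,a] = t^{-2}(D[D,a]+[D,a]D) \) gives
\[ [(W_t+\lambda)^{-1}, a] = -t^{-2}(W_t+\lambda)^{-1}\bigl(D[D,a]+[D,a]D\bigr)(W_t+\lambda)^{-1}, \]
while the elementary spectral estimates
\[ \norm{(W_t+\lambda)^{-1}} \leq (1+\lambda)^{-1},\qquad \norm{D(W_t+\lambda)^{-1}} \leq \tfrac{t}{2}(1+\lambda)^{-1/2}, \]
obtained by maximising \( (1+y^2+\lambda)^{-1} \) and \( |y|(1+y^2+\lambda)^{-1} \) in \( y=t^{-1}x\in\R \), produce the pointwise bound
\[ \norm{[(W_t+\lambda)^{-1}, a]} \leq t^{-1}(1+\lambda)^{-3/2}\norm{[D,a]}. \]
Setting \( \lambda=0 \) immediately handles the borderline case \( s=1 \), with constant \( 1 \). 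For \( s\in(0,1) \), integrating against the weight \( (\sin\pi s/\pi)\lambda^{-s} \) and identifying the result as a Beta integral via \eqref{eq:Beta_function} yields
\[ \norm{[W_t^{-s}, a]} \leq \frac{\sin\pi s}{\pi}\,\operatorname{B}(1-s,\,s+\tfrac{1}{2})\,t^{-1}\norm{[D,a]}, \]
and the reflection formula \eqref{eq:Euler_reflection} simplifies the constant to \( 2\Gamma(s+\tfrac{1}{2})/(\sqrt{\pi}\Gamma(s)) = 2/\operatorname{B}(s,\tfrac{1}{2}) \). Since \( \operatorname{B}(s,\tfrac{1}{2}) = \int_0^1 u^{s-1}(1-u)^{-1/2}\mathrm{d}u \) is decreasing in \( s \) on \( (0,1] \) with \( \operatorname{B}(1,\tfrac{1}{2})=2 \), the constant is at most \( 1 \), and in particular bounded by \( 2 \), establishing the main inequality.

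For the consequence, I use that \( c(x)s(x) = \tfrac{1}{2}(1+x^2)^{-1/2} \) as computed in the proof of \Cref{thm:choice_cs}, so \( \sqrt{c_ts_t} = 2^{-1/2}(1+t^{-2}D^2)^{-1/4} \). Applying the main estimate with \( s=1/4 \) then gives
\[ \norm{[\sqrt{c_ts_t}, a]} \leq 2^{-1/2}\cdot 2t^{-1}\norm{[D,a]} = \sqrt{2}\,t^{-1}\norm{[D,a]} \leq 2t^{-1}\norm{[D,a]}. \]

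The main technical obstacle I anticipate lies in justifying the integral representation of \( W_t^{-s} \) at the level of the unbounded regular operator \( D \) and in interchanging the commutator with the Bochner integral. Once the norm convergence of the integral is established via the bound \( \norm{\lambda^{-s}(W_t+\lambda)^{-1}}\leq \lambda^{-s}(1+\lambda)^{-1} \) and the dominated convergence argument for the commutator passes through, the remaining work reduces to the routine spectral estimates and Beta-function manipulations sketched above.
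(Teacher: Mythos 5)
Your proof is correct and follows essentially the same route as the paper: the integral formula for the fractional power, the resolvent identity to reduce to $[D,a]$, pointwise spectral bounds, and a Beta-function computation. The only deviation is cosmetic — you sharpen the intermediate bound to $\norm*{D_t(1+\lambda+D_t^2)^{-1}}\leq\tfrac{1}{2}(1+\lambda)^{-1/2}$, which yields $\operatorname{B}(1-s,s+\tfrac{1}{2})$ and a slightly better constant, whereas the paper uses the cruder contractivity bound $\leq 1$ and obtains the cleaner identity $\operatorname{B}(1-s,s)=\pi/\sin\pi s$, so that the $\tfrac{\sin\pi s}{\pi}$ prefactor cancels exactly.
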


\begin{proof}
Write~\( D_t\defeq t^{-1}D \) for abbreviation. 
For~\( s=1 \), we have
\begin{align*} 
[(1+D_t^2)^{-1},a]=&-(1+D_t^2)^{-1}[D_t^2,a](1+D_t^2)^{-1} \\
=&-(1+D_t^2)^{-1}D_t[D_t,a](1+D_t^2)^{-1}\\
&-(1+D_t^2)^{-1}[D_t,a]D_t(1+D_t^2)^{-1}.
\end{align*}
Since both~\( (1+D_t^2)^{-1}D_t \) and~\( (1+D_t^2)^{-1} \) are contractive
by functional calculus of~\( D_t \), we have
\[ 
    \norm{[(1+D_t^2)^{-1},a]}\leq2\norm*{[D_t,a]}=2t^{-1}\norm*{[D,a]}.
\]

For~\( 0<s<1 \), recall the following integral formula
(cf.~\cite{Carey-Phillips:Unbounded_Fredholm_modules}*{Remark 3}):
\begin{equation}\label{eq:integral_formula}
    (1+D_t^2)^{-s}=\frac{\sin(s\pi)}{\pi}\int_{0}^{\infty}\lambda^{-s}(1+D_t^2+\lambda)^{-1}\mathrm{d}\lambda,\quad
    0<s<1.
\end{equation}
This gives
\begin{align*}
[(1+D_t^2)^{-s},a]=&\frac{\sin(s\pi)}{\pi}\int_{0}^{\infty}\lambda^{-s}[(1+\lambda+D_t^2)^{-1},a]\mathrm{d}\lambda
\\
=&-\frac{\sin(s\pi)}{\pi}\int_{0}^{\infty}\lambda^{-s}(1+\lambda+D_t^2)^{-1}[1+\lambda+D_t^2,a](1+\lambda+D_t^2)^{-1}\mathrm{d}\lambda \\
=&-\frac{\sin(s\pi)}{\pi}\int_{0}^{\infty}\lambda^{-s}(1+\lambda+D_t^2)^{-1}[D_t^2,a](1+\lambda+D_t^2)^{-1}\mathrm{d}\lambda \\
=&-\frac{\sin(s\pi)}{\pi}\int_{0}^{\infty}\lambda^{-s}(1+\lambda+D_t^2)^{-1}D_t[D_t,a](1+\lambda+D_t^2)^{-1}\mathrm{d}\lambda \\
&-\frac{\sin(s\pi)}{\pi}\int_{0}^{\infty}\lambda^{-s}(1+\lambda+D_t^2)^{-1}[D_t,a]D_t(1+\lambda+D_t^2)^{-1}\mathrm{d}\lambda.
\end{align*}
We note that~\( (1+\lambda+D_t^2)^{-1}D_t=D_t(1+\lambda+D_t^2)^{-1}
\) is contractive by functional calculus of~\( D_t \). Therefore,
\begin{align*}
\norm*{[(1+D_t^2)^{-s},a]}&\leq\frac{2\sin(s\pi)}{\pi}\norm{[D_t,a]}\int_{0}^{\infty}\lambda^{-s}(1+\lambda+D^2)^{-1}\mathrm{d}\lambda\\
&\leq\frac{2t^{-1}\sin(s\pi)}{\pi}\norm{[D,a]}\int_{0}^{\infty}\lambda^{-s}(1+\lambda)^{-1}\mathrm{d}\lambda,
\end{align*}
while we know from the expression \eqref{eq:Beta_function} of the Beta
function as well as the Euler reflection formula
\eqref{eq:Euler_reflection} that, for all~\( 0<s<1 \): 
\begin{align*} 
\int_{0}^{\infty}\lambda^{-s}(1+\lambda)^{-1}\mathrm{d}\lambda
=\operatorname{B}(1-s,s)=\frac{\Gamma(s)\Gamma(1-s)}{\Gamma(1)}
=\frac{\pi}{\sin s\pi}.
\end{align*}
Then we conclude that
\[ 
    \norm{[(1+D_t^2)^{-s},a]}\leq 2t^{-1}\norm*{[D,a]},\quad 0<r\leq1.
\]
Now the estimate for~\( \norm{[\sqrt{c_ts_t},a]} \) follows from~\(
\sqrt{c_ts_t}=\frac{1}{\sqrt{2}}(1+D_t^2)^{-\sfrac{1}{4}} \).
\end{proof}

The estimate below follows similarly.

\begin{proposition} 
\label{prop:commutator_Ds_Ft}
Let~\( F_{t}\defeq t^{-1}D(1+t^{-2}D^{2})^{-\sfrac{1}{2}} \). Then for 
all~\( 0\leq s<1 \),~\( [F_{t},a] \) maps~\( E \) into~\( \dom\abs{D}^{s} \), and 
the following estimate holds\textup{:}
\[\norm*{\abs{D}^{s}[F_{t},a]}\leq t^{s-1} C_s\norm*{[D,a]},\]
where
\[ 
    C_s\defeq
    1+\frac{2\Gamma(\tfrac{1-s}{2})}{\sqrt{\pi}\Gamma(\tfrac{2-s}{2})}.
\]
In particular, setting~\( s=0 \) gives  
\[ 
    \norm*{[F_t,a]}\leq 3t^{-1}\norm*{[D,a]}.
\]

\end{proposition}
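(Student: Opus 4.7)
Proof plan. The plan is to exhibit $F_t$ as a norm-convergent integral of first-order resolvents, so that the commutator $[F_t, a]$ and its post-composition with $|D|^s$ can be controlled entirely by the functional calculus of $D$.

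First, I would start from the Schwinger-type identity
\[(1+t^{-2}D^2)^{-1/2} \;=\; \tfrac{2}{\pi}\int_0^\infty (1+\lambda^2+t^{-2}D^2)^{-1}\,\mathrm{d}\lambda,\]
which rewrites $F_t = \tfrac{2}{\pi}\int_0^\infty G_\lambda\,\mathrm{d}\lambda$ with $G_\lambda \defeq t^{-1}D(1+\lambda^2+t^{-2}D^2)^{-1}$. The partial-fraction decomposition
\[G_\lambda \;=\; \tfrac{1}{2}\bigl((t^{-1}D+\mathrm{i}\mu)^{-1}+(t^{-1}D-\mathrm{i}\mu)^{-1}\bigr), \qquad \mu\defeq\sqrt{1+\lambda^2},\]
combined with the standard resolvent identity, then expresses $[G_\lambda, a]$ as a sum of sandwiches of the form $-(t^{-1}D\pm\mathrm{i}\mu)^{-1}\,t^{-1}[D,a]\,(t^{-1}D\pm\mathrm{i}\mu)^{-1}$, so that pulling the commutator inside the integral is legitimate.

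Next, I would use functional calculus of $D$ to establish the two key estimates $\norm{|D|^s(t^{-1}D\pm\mathrm{i}\mu)^{-1}}\leq t^s\mu^{s-1}$ and $\norm{(t^{-1}D\pm\mathrm{i}\mu)^{-1}}\leq\mu^{-1}$, valid for $0\leq s<1$ by a short calculus argument optimising the scalar function $|x|^s/\sqrt{t^{-2}x^2+\mu^2}$. Together these give the pointwise integrand bound $\norm{|D|^s[G_\lambda, a]}\leq t^{s-1}(1+\lambda^2)^{(s-2)/2}\norm{[D,a]}$, which is absolutely integrable precisely because $s<1$. The substitution $\lambda=\tan\theta$ converts the resulting $\lambda$-integral into $\int_0^{\pi/2}\cos^{-s}\theta\,\mathrm{d}\theta$, which the Beta-Gamma identity evaluates as $\tfrac{\sqrt{\pi}\,\Gamma((1-s)/2)}{2\,\Gamma((2-s)/2)}$, producing the Gamma-ratio in $C_s$. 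The additive constant ``$1$'' in $C_s$ can be peeled off by an initial Leibniz-split $[F_t, a] = t^{-1}[D, a](1+t^{-2}D^2)^{-1/2} + t^{-1}D[(1+t^{-2}D^2)^{-1/2}, a]$, bounding the first summand separately and applying the integral machinery only to the second; the $s=0$ specialisation $(1+2\pi)\,t^{-1}\norm{[D,a]}$ then follows from $\Gamma(\tfrac12)=\sqrt{\pi}$ and $\Gamma(1)=1$.

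The main obstacle will be the domain assertion that $[F_t, a]E\subseteq\dom|D|^s$, rather than the quantitative estimate itself. This is handled by the integral representation: each resolvent $(t^{-1}D\pm\mathrm{i}\mu)^{-1}$ maps $E$ into $\dom D\subseteq\dom|D|^s$, so every integrand already takes values in $\dom|D|^s$, and the pointwise integrand bound exhibits $|D|^s[F_t, a]$ as the value of a norm-convergent Bochner integral of bounded operators. Closedness of $|D|^s$ then places the image of $[F_t, a]$ inside $\dom|D|^s$ and delivers the stated norm estimate as the Bochner-integral bound of the integrand.
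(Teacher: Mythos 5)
Your core strategy---the Schwinger identity \( (1+t^{-2}D^2)^{-\sfrac{1}{2}}=\tfrac{2}{\pi}\int_0^\infty (1+\lambda^2+t^{-2}D^2)^{-1}\,\mathrm{d}\lambda \) followed by a partial-fraction decomposition of \( G_\lambda \) into first-order resolvents---is a genuinely different and arguably cleaner route than the paper's. The paper first performs a Leibniz split of \( F_t=(1+D_t^2)^{-\sfrac{1}{2}}D_t \) (note the \emph{order}), bounds the summand \( \abs{D}^s(1+D_t^2)^{-\sfrac{1}{2}}[D_t,a] \) directly by a contraction argument (this accounts for the ``\( 1 \)'' in \( C_s \)), and applies the integral formula \( (1+D_t^2)^{-\sfrac{1}{2}}=\tfrac{1}{\pi}\int_0^\infty\lambda^{-\sfrac{1}{2}}(1+\lambda+D_t^2)^{-1}\,\mathrm{d}\lambda \) only to the remaining commutator, keeping the second-order resolvent intact. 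Your approach, through the two functional-calculus bounds \( \norm{\abs{D}^s(t^{-1}D\pm\mathrm{i}\mu)^{-1}}\leq t^s\mu^{s-1} \) and \( \norm{(t^{-1}D\pm\mathrm{i}\mu)^{-1}}\leq\mu^{-1} \) with \( \mu=\sqrt{1+\lambda^2} \), controls all of \( \abs{D}^s[F_t,a] \) in one pass, and it in fact produces the sharper constant \( \tfrac{\Gamma((1-s)/2)}{\sqrt{\pi}\,\Gamma((2-s)/2)} \), which is strictly smaller than the paper's \( C_s \); so your method proves the stated inequality a fortiori.

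Two caveats. First, the Leibniz split you propose to ``peel off the \( 1 \)'' has the factors in the \emph{wrong order}: your first summand \( \abs{D}^s\,t^{-1}[D,a](1+t^{-2}D^2)^{-\sfrac{1}{2}} \) pre-composes \( \abs{D}^s \) directly with \( [D,a] \), and there is no reason for \( [D,a]E \) to lie in \( \dom\abs{D}^s \). The paper's split \( (1+D_t^2)^{-\sfrac{1}{2}}[D_t,a]+[(1+D_t^2)^{-\sfrac{1}{2}},a]D_t \) is arranged precisely so that \( \abs{D}^s \) always meets a resolvent-type factor first; your version would stall on the first term. Since your direct integral already beats \( C_s \), this step is both unnecessary and unworkable as written, so simply drop it. Second, the operator integral \( \int_0^\infty G_\lambda\,\mathrm{d}\lambda \) does \emph{not} converge in operator norm: \( \norm{G_\lambda}=\tfrac{1}{2}(1+\lambda^2)^{-\sfrac{1}{2}} \) is not integrable, so ``pulling the commutator inside the integral'' is not automatic. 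One needs to argue vector-wise: the truncations \( F_t^N\defeq\tfrac{2}{\pi}\int_0^N G_\lambda\,\mathrm{d}\lambda \) converge to \( F_t \) strongly but not in norm, which yields \( [F_t,a]\xi=\tfrac{2}{\pi}\int_0^\infty[G_\lambda,a]\xi\,\mathrm{d}\lambda \) for each \( \xi\in E \) (now a norm-convergent integral, because of the extra decay that the commutator buys); closedness of \( \abs{D}^s \) then completes the domain assertion and the norm bound as you describe. That preliminary strong-limit step is a small but real gap that should be made explicit, since it precedes and enables the closedness argument you do discuss.
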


\begin{proof}
Write~\( D_t\defeq t^{-1}D \) for abbrevation. Then we have
\[ \abs{D}^{s}[F_{t},a]=\abs{D}^{s}(1+D_t^{2})^{-\sfrac{1}{2}}[D_t,a]
+\abs{D}^s[(1+D_t^{2})^{-\sfrac{1}{2}},a]D_t. \]
We address the two summands separately. For the first one it follows
that
\begin{align*}
\norm*{\abs{D}^{s}(1+D_t^{2})^{-\sfrac{1}{2}}[D_t,a]}
&=\norm*{\abs{D}^{s}t^{-1}(1+t^{-2}D^{2})^{-\sfrac{1}{2}}[D,a]}\\
&=
t^{s-1}\norm*{t^{-s}\abs{D}^{s}(1+t^{-2}D^{2})^{-\sfrac{1}{2}}[D,a]} \\
&\leq t^{s-1}\norm*{[D,a]},
\end{align*}
where the last inequality holds because~\(
t^{-s}\abs{D}^s(1+t^{-2}D^2)^{-\sfrac{1}{2}}=\abs{D_t}^s(1+D_t^2)^{-\sfrac{1}{2}} \) is contractive for~\( s<1 \).  

For the second summand, we expand~\( (1+D_t^{2})^{-\sfrac{1}{2}} \) with
the integral formula \eqref{eq:integral_formula} to get
\begin{align*}
&\abs{D}^{s}[(1+D_t^2)^{-\sfrac{1}{2}},a]D_t \\
=-\,&\frac{\abs{D}^{s}}{\pi}\int_{0}^{\infty}\lambda^{-\sfrac{1}{2}}(1+\lambda+D_t^2)^{-1}(D_t[D_t,a]+[D_t,a]D_t)(1+\lambda+D_t^2)^{-1}D_t\,\mathrm{d}\lambda\\
=-\,&\frac{t^{-1}\abs{D}^{s}}{\pi}\int_{0}^{\infty}\lambda^{-\sfrac{1}{2}}(1+\lambda+D_t^2)^{-1}(D_t[D,a]+[D,a]D_t)(1+\lambda+D_t^2)^{-1}D_t\,\mathrm{d}\lambda.
\end{align*}
Since~\( D_t(1+\lambda+D_t^2)^{-1}D_t=(1+\lambda+D_t^2)^{-1}D_t^{2} \) is
contractive, it follows that
\begin{align*}
\norm*{\abs{D}^{s}[(1+D_t^2)^{-\sfrac{1}{2}},a]D_t}
&\leq\frac{2t^{-1}}{\pi}\cdot\norm{[D,a]}\cdot\norm*{\int_{0}^{\infty}\abs{D}^s\lambda^{-\sfrac{1}{2}}(1+\lambda+D_t^2)^{-1}\,\mathrm{d}\lambda}\\
&=\frac{2t^{s-1}}{\pi}\cdot\norm{[D,a]}\cdot\norm*{\int_{0}^{\infty}\lambda^{-\sfrac{1}{2}}|D_t|^{s}(1+\lambda+D_t^2)^{-\frac{s}{2}}(1+\lambda+D_t^2)^{\frac{s-2}{2}}\mathrm{d}\lambda}\\
&\leq\frac{2t^{s-1}}{\pi}\cdot\norm{[D,a]}\cdot\norm*{\int_{0}^{\infty}\lambda^{-\sfrac{1}{2}}(1+\lambda+D_t^2)^{\frac{s-2}{2}}\mathrm{d}\lambda}\\
&\leq
\frac{2t^{s-1}}{\pi}\cdot\norm{[D,a]}\cdot\abs*{\int_0^\infty\lambda^{-\sfrac{1}{2}}(1+\lambda)^{\frac{s-2}{2}}\mathrm{d}\lambda}.
\end{align*}
Now by \eqref{eq:Beta_function} we have
\[ 
    \int_{0}^\infty\lambda^{-\sfrac{1}{2}}(1+\lambda)^{\frac{s-2}{2}}\,\mathrm{d}\lambda=\operatorname{B}(\tfrac{1}{2},\tfrac{1-s}{2})=\frac{\Gamma(\tfrac{1}{2})\Gamma(\tfrac{1-s}{2})}{\Gamma(\tfrac{2-s}{2})},
\]
whence
\begin{align*}
\norm*{\abs{D}^s[F_t,a]}&\leq
t^{s-1}\norm{[D,a]}+\frac{2t^{s-1}}{\sqrt{\pi}}\norm{[D,a]}\operatorname{B}(\tfrac{1}{2},\tfrac{1-s}{2})
\\
&=t^{s-1}\norm{[D,a]}\cdot\left(1+\frac{2\Gamma(\tfrac{1-s}{2})}{\sqrt{\pi}\Gamma(\tfrac{2-s}{2})}\right).\qedhere
\end{align*}
\end{proof}

\section{Numerical index pairings as traces}
\label[appendix]{app:proof_commutativity}
In this appendix, we shall provide the proof of
\Cref{thm:commutative_diagram_trace}:

\begin{theorem*}
The \st-homomorphism~\( \varrho^{\tau}\colon
\End_B^*(E)\to\Bdd(\mathcal{H}^\tau) \)
maps~\( \Cpt_B(E) \) into~\( \Cpt_{\widehat{\tau}} \). Its induced map
\[ 
    \varrho^{\tau}_*\colon \K_0(\Cpt_B(E))\to\K_0(\Cpt_{\widehat{\tau}})
\]
makes the the following diagram commute:
\begin{equation}\label{eq:commutative_diagram_trace}\tag{TR}
\begin{tikzcd}[row sep=large, column sep=large]
\K_1(A) \arrow[r, "\times{[D]}"] \arrow[rd, "\Phi^D_*"'] & \K_0(B)
\arrow[r, "\tau_*"] & \R \\
& \K_0(\Cpt_B(E))  \arrow[u, "{[E]}"]  \arrow[r,"\varrho^{\tau}_*"] &
\K_0(\Cpt_{\widehat{\tau}}) \arrow[u, "\widehat{\tau}_*"'].
\end{tikzcd} \end{equation}
\end{theorem*}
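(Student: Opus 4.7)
The plan is to split the proof into three pieces: (a) the containment $\varrho^\tau(\Cpt_B(E)) \subseteq \Cpt_{\widehat{\tau}}$, so that $\varrho^\tau_*$ is well-defined; (b) commutativity of the left triangle (involving $\Phi^D_*$); and (c) commutativity of the right square. Part (b) follows directly from \Cref{thm:Phi^D}: the asymptotic morphism $\Phi^D_t$ realises the natural transformation $\KK_1 \to \E_1$ on the class $[D]$, and consequently $[E]\circ \Phi^D_* = \times[D]$ by the factorisation \eqref{eq:K-theoretic_index_pairing} of the Kasparov index pairing into $\partial$ followed by $[E]$.

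For (a), I would first observe that every rank-one operator $\ket{\xi_1}\bra{\xi_2} \in \Fin^*_B(E)$ has $\widehat{\tau}(\varrho^\tau(\ket{\xi_1}\bra{\xi_2})) = \tau(\braket{\xi_2,\xi_1}) < +\infty$ by \Cref{lem:transfer_trace}(2), hence $\varrho^\tau(\ket{\xi_1}\bra{\xi_2})$ is $\widehat{\tau}$-finite in $\mathcal{N}$. A positive $\widehat{\tau}$-finite element is a norm-limit of linear combinations of its $\widehat{\tau}$-finite spectral projections and so lies in $\Cpt_{\widehat{\tau}}$; together with polar decomposition this places $\varrho^\tau(\ket{\xi_1}\bra{\xi_2})$ in $\Cpt_{\widehat{\tau}}$. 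Passing to linear combinations and taking norm-closure gives $\varrho^\tau(\Cpt_B(E)) \subseteq \Cpt_{\widehat{\tau}}$.

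For (c), the strategy is to evaluate both group homomorphisms on classes represented by finite-rank projections, which generate $\K_0(\Cpt_B(E))$. Given such a projection $p = VV^* \in \Fin^*_B(E)$ coming from an adjointable map $V\colon B^n \to E$ with $V(e_i) = \xi_i$, the Morita equivalence identifies $pE$ with $(V^*V)B^n$, where the matrix $V^*V = (\braket{\xi_i,\xi_j})_{i,j} \in \Mat_n(B)$ is itself a projection; thus $[E][p] = [V^*V] \in \K_0(B)$ and $\tau_*[E][p] = \sum_i \tau(\braket{\xi_i,\xi_i})$. On the other side, \Cref{lem:transfer_trace}(2) together with additivity of $\widehat{\tau}$ yield $\widehat{\tau}_*\varrho^\tau_*[p] = \widehat{\tau}(p) = \sum_i \widehat{\tau}(\ket{\xi_i}\bra{\xi_i}) = \sum_i \tau(\braket{\xi_i,\xi_i})$. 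The two expressions coincide on every finite-rank projection, which by the homomorphism property of both maps suffices to conclude.

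The main technical obstacle is the careful treatment of unitisations and the scalar parts $s(\cdot)$ appearing in the definition of $\widehat{\tau}_*$ on $\K_0(\Cpt_{\widehat{\tau}})$: a general class is a formal difference $[p]-[q]$ with $p,q \in \Mat_n(\Cpt_B(E)^+)$ whose scalar parts must cancel compatibly under both $[E]\circ(\cdot)$ and $\widehat{\tau}_*\circ\varrho^\tau_*$. This is handled by observing that both sides vanish on projections coming from the scalar part $\Mat_n(\C) \hookrightarrow \Mat_n(\Cpt_B(E)^+)$, and agree on differences of finite-rank projections in $\Mat_n(\Fin^*_B(E))$ sharing a common scalar part, which exhausts $\K_0(\Cpt_B(E))$ by density of $\Fin^*_B(E)$ in $\Cpt_B(E)$ combined with holomorphic stability of the finite-rank trace-class under $\widehat{\tau}_*$.
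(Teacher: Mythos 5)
Your part (b) coincides exactly with the paper's treatment: the commutativity of the triangle follows from \Cref{thm:Phi^D} and the factorisation \eqref{eq:K-theoretic_index_pairing}.

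Your part (a) takes a genuinely different and more elementary route than the paper. The paper first proves (\Cref{lem:compactly_supported_fin_rank}, \Cref{lem:resolvent_tau_compact}) that $f(D)\in\Fin_B^*(E)$ for $f\in\Cc(\R)$ via the identification of $\Fin_B^*(E)$ with the Pedersen ideal, and then shows $(D^\tau+\mathrm{i})^{-1}\in\Cpt_{\widehat\tau}$ generates the needed containment; your argument bypasses $D$ entirely and goes straight from rank-one operators being $\widehat\tau$-finite (Lemma~\ref{lem:transfer_trace}(2)) to $\Fin_B^*(E)\subseteq\Cpt_{\widehat\tau}$ by spectral projections and polar decomposition, then takes the norm closure. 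This is shorter and is a perfectly sound proof of the containment on its own. The reason the paper invests in the heavier Pedersen-ideal machinery is that \Cref{lem:resolvent_tau_compact} is reused elsewhere (in \Cref{thm:semi-finite_index_pairing_subspace}, to show $\chi_{[-\lambda,\lambda]}(D^\tau\oplus D^\tau)$ is $\widehat\tau$-finite), so your shortcut would not make that dependency disappear.

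Part (c) is where there is a genuine gap. Your computation on a finite-rank projection $p=VV^*\in\Fin^*_B(E)$ with $V\colon B^n\to E$ a partial isometry is correct on both sides: $\tau_*[E][p]=\sum_i\tau(\braket{\xi_i,\xi_i})=\widehat\tau_*\varrho^\tau_*[p]$. But the reduction of a general class in $\K_0(\Cpt_B(E))$ to a difference of such finite-rank projections is the whole content of the argument, and the final sentence (``density of $\Fin^*_B(E)$ combined with holomorphic stability of the finite-rank trace-class under $\widehat\tau_*$'') does not establish it. The holomorphic closedness invoked in the paper is that of $\Fin_{\widehat\tau}$ inside $\Cpt_{\widehat\tau}$ (Kaad--Nest--Rennie), which is used to make $\widehat\tau_*$ well-defined, not to show that projections in $\Mat_n(\Cpt_B(E)^+)$ can be homotoped to projections with entries in $\Fin_B^*(E)$ while keeping their scalar parts under control. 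The paper avoids this entirely by applying Kasparov's stabilisation theorem: an isometry $V\in\Hom_B^*(E,\mathcal{H}_B)$ gives the augmented diagram \eqref{eq:commutative_diagram_trace_augmented}, reduces the problem to $\Cpt_B(\mathcal{H}_B)\simeq\Cpt\otimes B=\varinjlim\Mat_n(B)$, and then continuity of $\K_0$ under direct limits (together with unitality of $B$) guarantees that every class is represented by honest projections in $\Mat_\infty(B)$, on which the computation you performed can be carried out safely. Without some such device your argument leaves the generation claim unsubstantiated; I would recommend either inserting the stabilisation step as in the paper, or supplying a precise argument that $\Mat_n(\Fin_B^*(E)^+)$ is closed under the holomorphic functional calculus used in $\kappa_0$ and that this yields the needed reduction.
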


We recall the setup.
Let~\( A \) and~\( B \) be unital \Cst-algebras and~\( \tau\colon B\to\C \)
be a \emph{finite}, \emph{faithful} trace, which means that:
\begin{center}
\( \tau(b^*b)<+\infty \) for all~\( b\in B \)\quad and\quad
\( \tau(b^*b)=0 \) iff~\( b=0 \).
\end{center}
Let~\( (\mathcal{A},E,D) \) be a unital odd unbounded Kasparov~\( A \)-\( B
\)--module. The finite faithful trace~\( \tau\colon B\to\C \) transfers to a
semi-finite trace on~\( \Cpt_B(E) \), and generates an isometric
representation
\[ 
    \varrho^\tau\colon \End_B^*(E)\to\Bdd(\mathcal{H}^\tau),\quad T\mapsto
    T\otimes \id,
\]
where~\( \mathcal{H}_\tau \) is the Hilbert space~\( E\otimes_BL^2(B,\tau)
\), obtained as the localisation of~\( E \) at the GNS-representation~\(
B\to\Bdd(L^2(B,\tau)) \). The isometric representation also gives an
unbounded, self-adjoint operator~\( D^\tau \) on~\( \mathcal{H}^\tau \) as
the closure of~\( D\otimes\id \). 

We shall first prove that~\( \varrho^\tau \) maps~\( \Cpt_B(E) \) into~\(
\Cpt_{\widehat{\tau}} \), where~\( \Cpt_{\widehat{\tau}} \) is the
\Cst-algebra generated by all~\( \widehat{\tau} \)-finite projection inside
the enveloping von Neumann algebra~\( \mathcal{N} \) of~\(
\varrho^\tau(\End_B^*(E)) \).

The following lemma is of independent interest.

\begin{lemma}\label{lem:compactly_supported_fin_rank}
Let~\( \mathcal{D} \) be an unbounded self-adjoint regular operator on a Hilbert
\( B \)-module~\( \mathcal{E} \) such that 
\( (1+\mathcal{D}^2)^{-1}\in\Cpt_B(\mathcal{E}) \). 
Then for every~\( f\in\Cc(\R) \),~\( f(\mathcal{D})\in\Fin_B^*(\mathcal{E}) \).
\end{lemma}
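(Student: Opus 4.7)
The plan is to leverage compactness of the resolvent to reduce matters to a discrete-spectrum situation. Set $R\defeq(1+\mathcal{D}^2)^{-1}$; this is a positive compact operator, so $\sigma(R)$ consists of $0$ together with an at most countable set of positive values accumulating only at $0$. Applying the spectral mapping theorem to the continuous proper map $\phi(\lambda)=(1+\lambda^2)^{-1}$ gives $\sigma(R)\setminus\{0\}=\phi(\sigma(\mathcal{D}))$, from which one deduces that $\sigma(\mathcal{D})\subseteq\R$ is discrete with $|\lambda|\to\infty$ along $\sigma(\mathcal{D})$. For the given $f\in\Cc(\R)$, the set $\sigma(\mathcal{D})\cap\supp f$ is therefore finite, say $\{\lambda_1,\dots,\lambda_k\}$.

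For each $i$, choose $h_i\in\Cc(\R)$ supported in a small neighborhood of $\lambda_i$ disjoint from all other points of $\sigma(\mathcal{D})$, and equal to $1$ near $\lambda_i$. Then $h_i^2$ and $h_i$ agree on $\sigma(\mathcal{D})$, so continuous functional calculus yields a self-adjoint projection $p_i\defeq h_i(\mathcal{D})$. The set $\{g\in\Co(\R):g(\mathcal{D})\in\Cpt_B(\mathcal{E})\}$ is a norm-closed $\ast$-subalgebra of $\Co(\R)$ containing $(x\pm\mathrm{i})^{-1}$ (whose images are $(\mathcal{D}\pm\mathrm{i})^{-1}$, compact by the resolvent hypothesis), so the Stone--Weierstrass theorem forces it to be all of $\Co(\R)$; hence $p_i\in\Cpt_B(\mathcal{E})$. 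Since $f|_{\sigma(\mathcal{D})}=\sum_i f(\lambda_i)\,h_i|_{\sigma(\mathcal{D})}$ on the discrete set $\sigma(\mathcal{D})$, functional calculus then gives $f(\mathcal{D})=\sum_{i=1}^k f(\lambda_i)\,p_i$.

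The remaining and central step is to upgrade each compact projection $p_i$ to an element of $\Fin_B^*(\mathcal{E})$. Approximating $p_i$ by a finite-rank operator $T$ with $\|p_i-T\|<\tfrac{1}{4}$ yields $\|p_i-p_iTp_i\|<\tfrac{1}{2}$, so $p_iTp_i$ is invertible in $\End_B^*(p_i\mathcal{E})$. Its image lies in the finitely generated $B$-submodule spanned by the range vectors of $T$, forcing $p_i\mathcal{E}$ to be finitely generated; being complemented in $\mathcal{E}$, it is then finitely generated projective and therefore a direct summand of $B^{m_i}$ for some $m_i$. A frame $\{\zeta_j\}_{j=1}^{m_i}$ for this summand expresses $p_i=\sum_j\ket{\zeta_j}\bra{\zeta_j}\in\Fin_B^*(\mathcal{E})$, and substituting into the formula for $f(\mathcal{D})$ concludes the proof. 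The delicate point throughout is precisely this passage from compact to finite-rank projection, which rests on the structure theory of finitely generated projective Hilbert $C^*$-modules.
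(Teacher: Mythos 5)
The proof is based on a false premise at the very first step and the argument does not recover. You claim that the positive compact operator $R=(1+\mathcal{D}^2)^{-1}\in\Cpt_B(\mathcal{E})$ has $\sigma(R)=\{0\}\cup\{\text{discrete positive values accumulating only at }0\}$, and from this deduce that $\sigma(\mathcal{D})$ is discrete and meets $\supp f$ in a finite set. This spectral picture is characteristic of compact operators on \emph{Hilbert spaces}, but it is simply not true for compact operators on Hilbert $C^*$-modules over a general $B$. For instance, take $B=\Co(\R)$, $\mathcal{E}=B$, and $\mathcal{D}$ the regular self-adjoint operator of multiplication by $x$. Then $(1+\mathcal{D}^2)^{-1}$ is multiplication by $(1+x^2)^{-1}\in\Co(\R)\cong\Cpt_B(B)$, so the hypothesis of the lemma holds, but $\sigma(\mathcal{D})=\R$ and $\sigma(R)=[0,1]$; neither is discrete, and $\sigma(\mathcal{D})\cap\supp f$ is infinite for any nonzero $f\in\Cc(\R)$. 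The entire reduction to finitely many spectral projections $p_i=h_i(\mathcal{D})$, and the decomposition $f(\mathcal{D})=\sum_i f(\lambda_i)p_i$, therefore collapses; there are no isolated points of the spectrum from which to manufacture these projections.

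The later part of your argument — that a compact projection on a Hilbert $C^*$-module has finitely generated projective range and is therefore a finite-rank operator given by a frame — is correct and is a standard fact, but it only becomes usable after one has produced compact projections, which the false spectral claim cannot do. The paper's proof sidesteps spectral decompositions entirely: it shows that for a positive, even $\varphi\in\Cc(\R)$ one may write $\varphi(\mathcal{D})=(\varphi\circ\theta)\bigl((1+\mathcal{D}^2)^{-1}\bigr)$ with $\varphi\circ\theta$ vanishing near $0$, so $\varphi(\mathcal{D})$ lies in the Pedersen ideal $\Ped(\Cpt_B(\mathcal{E}))$, and then invokes the hereditarity of the Pedersen ideal together with a structural result of Ara identifying $\Ped(\Cpt_B(\mathcal{E}))$ with $\Fin_B^*(\mathcal{P})\subseteq\Fin_B^*(\mathcal{E})$ for a suitable submodule $\mathcal{P}$. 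That route is purely $C^*$-algebraic and works without any discreteness of $\sigma(\mathcal{D})$, which is precisely what the general Hilbert-module setting requires.
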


\begin{proof} 
Recall that
(cf.~\cite{Blackadar:Operator_algebras}*{II.5.2.4})
the Pedersen ideal~\( \Ped(A) \) of 
a \Cst-algebra~\( A \) is a hereditary, dense
two-sided \st-ideal of~\( A \), generated as an ideal by elements
of the form~\( f(a) \) for~\( a\in A_+ \) and~\( f\in\mathfrak{F}_+ \),
where
\[ \mathfrak{F}_+\defeq\set*{f\in\Co[0,\infty)\;\middle|\;\text{\( f \)
vanishes on a neighbourhood of~\( 0 \)}}.
\]

We claim that~\( \varphi(\mathcal{D})\in\Ped(\Cpt_B(\mathcal{E})) \) for every
positive, even function~\( \varphi\in\Cc(\R) \). For such functions, there
exists~\( R\geq 0 \) such that~\( \varphi(x)=\varphi(-x)\geq 0 \) for all
\( x\in[-R,R] \); and~\( \varphi(x)=0 \) for all~\( x>R \). Then
\[ 
    \varphi(x)=\big(\varphi\circ\theta\big)\big((1+x^2)^{-1}\big)
\]
where~\( \theta \) is the function 
\[ 
    \theta\colon(0,1]\to[0,+\infty),\quad \theta(y)\defeq \sqrt{y^{-1}-1}.
\]
Since~\( \varphi(x) \) vanishes for all~\( x>R \) , the composition~\(
\varphi\circ\theta(y) \) vanishes for all~\( y\in(0,\tfrac{1}{R^2+1})
\), thus~\( \varphi\circ\theta\in\mathfrak{F}_+ \).
By assumption,~\( (1+\mathcal{D}^2)^{-1}\in\Cpt_B(\mathcal{E})_+ \). So
\( \varphi(\mathcal{D})=(\varphi\circ\theta)((1+\mathcal{D}^2)^{-1}) \) belongs to~\(
\Ped(\Cpt_B(\mathcal{E})) \). 

Now for every~\( f\in\Cc(\R) \), write~\( f=\sum_{i=1}^nf_i \) as a finite
linear combination of positive functions~\( f_i\in\Cc(\R) \),
then~\( f_i(\mathcal{D})\in\Cpt_B(E)_+ \). For each~\( f_i \), define
\( \varphi_i(x)\defeq f_i(x)+f_i(-x) \). Then~\( \varphi_i \) is a
positive, even function in~\( \Cc(\R) \) that dominates~\(
f_i \), thus~\( 0\leq f_i(\mathcal{D})\leq\varphi_i(\mathcal{D})  \). Hence,~\(
f_i(\mathcal{D})\in\Ped(\Cpt_B(E)) \) by hereditarity of the Pedersen
ideal, and~\( f(\mathcal{D})=\sum_{i=1}^nf_i(\mathcal{D})\in\Ped(\Cpt_B(E)) \) as well.  

It follows from \cite{Ara:Morita_equivalence_Pedersen_ideals
}*{Theorem 1.6} that the Pedersen ideal of~\( \Cpt_B(\mathcal{E}) \) is
given by~\( \Fin_B^*(\mathcal{P}) \), where~\( \mathcal{P} \) is a
submodule of~\( \mathcal{E} \)
(cf.~\cite{Ara:Morita_equivalence_Pedersen_ideals}*{Definition 1.2}).
A finite-rank operator on a Hilbert submodule of~\( \mathcal{E} \)  
is also a finite-rank operator on~\( \mathcal{E} \), regardless of whether
or not this submodule is complemented. Then we conclude that for all~\(
f\in\Cc(\R) \), 
\[ 
f(\mathcal{D})\in\Ped(\Cpt_B(\mathcal{E}))=\Fin_B^*(\mathcal{P})\subseteq\Fin_B^*(\mathcal{E}).\qedhere
\]
\end{proof}

\begin{lemma}\label{lem:resolvent_tau_compact}
For every compact set~\( K\subseteq \R \),~\( \chi_K(D^\tau) \) is~\(
\widehat{\tau} \)-finite.    
As a consequence, 
one has~\( (D^\tau+\mathrm{i})^{-1}\in\Cpt_{\widehat{\tau}} \). 
\end{lemma}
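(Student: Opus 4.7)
The plan is to reduce the claim to \Cref{lem:compactly_supported_fin_rank} via the compatibility of continuous functional calculus for $D$ and its localisation $D^\tau$. Since $A$ is unital and $\varrho$ is essential, $\varrho(1_A)=\id_E$, so the Kasparov module condition forces $(1+D^2)^{-1}\in\Cpt_B(E)$, which brings $D$ within the scope of \Cref{lem:compactly_supported_fin_rank}. Hence $f(D)\in\Fin_B^*(E)$ for every $f\in\Cc(\R)$.

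Next I would invoke the standard identification $(D^\tau\pm\mathrm{i})^{-1}=\varrho^\tau\bigl((D\pm\mathrm{i})^{-1}\bigr)$ for the localisation of a self-adjoint regular operator on a Hilbert \Cst-module, and extend it by a Stone--Weierstrass/continuity-of-functional-calculus argument to $f(D^\tau)=\varrho^\tau(f(D))$ for every $f\in\Co(\R)$. Combined with \Cref{lem:transfer_trace}(2) and the previous paragraph, this already shows that $f(D^\tau)$ is $\widehat{\tau}$-finite for every $f\in\Cc(\R)$. For a compact $K\subseteq\R$, I would choose $f\in\Cc(\R)_+$ with $f\geq\chi_K$ pointwise; positive Borel functional calculus on the Hilbert space $\mathcal{H}^\tau$ then yields $0\leq\chi_K(D^\tau)\leq f(D^\tau)$, and monotonicity of $\widehat{\tau}$ gives $\widehat{\tau}(\chi_K(D^\tau))\leq\widehat{\tau}(f(D^\tau))<\infty$. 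Thus $\chi_K(D^\tau)$ is a $\widehat{\tau}$-finite projection, in particular an element of $\Fin_{\widehat{\tau}}\subseteq\Cpt_{\widehat{\tau}}$.

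For the consequence, I would approximate $g(x)=(x+\mathrm{i})^{-1}\in\Co(\R)$ uniformly on $\R$ by a sequence $g_n\in\Cc(\R)$, so that $g_n(D^\tau)\to(D^\tau+\mathrm{i})^{-1}$ in norm. Each $g_n(D^\tau)=\varrho^\tau(g_n(D))$ is $\widehat{\tau}$-finite, and expressing its real and imaginary parts via spectral decomposition against the $\widehat{\tau}$-finite spectral projections $\chi_K(D^\tau)$ produced in the previous step places every $g_n(D^\tau)$ in $\Cpt_{\widehat{\tau}}$. Since $\Cpt_{\widehat{\tau}}$ is a norm-closed ideal, the norm limit $(D^\tau+\mathrm{i})^{-1}$ lies in it as well.

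The main obstacle is the bridge from $f(D)\in\Fin_B^*(E)$ to $f(D^\tau)\in\Cpt_{\widehat{\tau}}$. Two points need to be reconciled: the identification $f(D^\tau)=\varrho^\tau(f(D))$, which rests on the theory of localisation of regular operators on Hilbert modules; and the claim that the $\widehat{\tau}$-finiteness of $T\in\End_B^*(E)$ guaranteed by \Cref{lem:transfer_trace} is truly inherited by $\varrho^\tau(T)\in\mathcal{N}$ with respect to the normal semi-finite extension of $\widehat{\tau}$ to the enveloping von Neumann algebra. Both are standard features of the KSGNS construction, but they deserve explicit mention in the actual write-up so that the monotonicity argument for $\chi_K(D^\tau)\leq f(D^\tau)$ yields the intended conclusion inside $\mathcal{N}$.
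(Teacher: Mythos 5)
Your proof is correct and follows essentially the same route as the paper: domination of $\chi_K$ by a compactly supported positive function, \Cref{lem:compactly_supported_fin_rank}, \Cref{lem:transfer_trace}(2), and monotonicity of $\widehat{\tau}$ for the first claim, then norm approximation of the resolvent for the second. The only variation is cosmetic: the paper forms the limit via the Borel truncations $(D^\tau+\mathrm{i})^{-1}\chi_{[-R,R]}(D^\tau)$, each a bounded operator times an element of the ideal $\Cpt_{\widehat{\tau}}$, whereas you approximate $(x+\mathrm{i})^{-1}$ uniformly by $g_n\in\Cc(\R)$; these are interchangeable, though the cleanest way to see $g_n(D^\tau)\in\Cpt_{\widehat{\tau}}$ is to write $g_n(D^\tau)=\chi_{\supp g_n}(D^\tau)\,g_n(D^\tau)$ and invoke the ideal property directly, rather than decomposing into real and imaginary parts.
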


\begin{proof}
Let~\( K\subseteq\R \) be a compact subset.
Choose any~\( \varphi\in\Cc(\R)_+ \) 
such that~\( \chi_K\leq\varphi \). Since
\( (\mathcal{A},E,D) \) is a unital odd, unbounded Kasparov module,~\(
(1+D^2)^{-1}\in\Cpt_B(E)_+ \). Thus by  
\Cref{lem:compactly_supported_fin_rank}, 
\( \varphi(D)\in\Fin_B^*(E) \). It follows from
(2) of \Cref{lem:transfer_trace} that
\( \varrho^\tau(\varphi(D))=\varphi(D^\tau) \) is~\( \widehat{\tau}
\)-finite. Therefore,
\[ 
    \widehat{\tau}(\chi_K(D^\tau))\leq\widehat{\tau}(\varphi(D^\tau))<+\infty,
\]
that is,~\( \chi_K(D^\tau) \) is a~\( \widehat{\tau} \)-finite projection,
hence belongs to~\( \Cpt_{\widehat{\tau}} \). In particular,~\(
\chi_{[-R,R]}(D^\tau)\in\Cpt_{\widehat{\tau}} \) for any~\( R>0 \).  

Since~\( \Cpt_{\widehat{\tau}} \) is an ideal in~\( \mathcal{N} \), it follows 
that
\[ 
    (D^\tau+\mathrm{i})^{-1}\chi_{[-R,R]}(D^\tau)\in\Cpt_{\widehat{\tau}}
\]
holds for every~\( R>0 \). Now let 
\( \{\mathrm{d}p_\lambda\}_{\lambda\in\R} \) be the spectral resolution of 
\( \mathcal{H}^\tau \) given by the self-adjoint operator~\( D^\tau \),
then
\[ 
    f(D^\tau)=\int_{\lambda\in\R}f(\lambda)\mathrm{d}p_\lambda
\]
holds for any Borel function~\( f\colon \R\to\C \). In particular, we have
\begin{align*} 
\norm*{(D^\tau+\mathrm{i})^{-1}-(D^\tau+\mathrm{i})^{-1}\chi_{[-\lambda,\lambda]}}&=\norm*{\int_{\lambda\in\R}(\lambda+\mathrm{i})^{-1}\,\mathrm{d}p_\lambda-\int_{\abs*{\lambda}\leq
R}(\lambda+\mathrm{i})^{-1}\,\mathrm{d}p_\lambda} \\
&=\norm*{\int_{\abs*{\lambda}>R}(\lambda+\mathrm{i})^{-1}\,\mathrm{d}p_{\lambda}}\\
&<\frac{1}{\sqrt{1+R^2}}.
\end{align*}
So~\( \{(D^\tau+\mathrm{i})^{-1}\chi_{[-R,R]}\}_{R>0} \) is a Cauchy net
that converges to~\( (D^\tau+\mathrm{i})^{-1} \) in norm. Hence,
\( (D^\tau+\mathrm{i})^{-1}\in\Cpt_{\widehat{\tau}} \) as well.  
\end{proof}

\begin{proposition}\label{prop:injective_K_tau}
The representation~\( \varrho^\tau\colon
\End_B^*(E)\to\Bdd(\mathcal{H}^\tau) \) maps~\( \Cpt_B(E) \) into~\(
\Cpt_{\widehat{\tau}} \).   
\end{proposition}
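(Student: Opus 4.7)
The plan is to exploit that \( \Cpt_B(E) \) is the norm-closure of the linear span of positive rank-one operators \( \ket{\xi}\bra{\xi} \), \( \xi\in E \). Since \( \varrho^\tau \) is a norm-continuous \st-homomorphism and \( \Cpt_{\widehat{\tau}} \) is a norm-closed \st-subspace of \( \mathcal{N} \), it suffices to exhibit \( \varrho^\tau(\ket{\xi}\bra{\xi}) \) as an element of \( \Cpt_{\widehat{\tau}} \) for every \( \xi\in E \).

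For such \( \xi \), part (2) of \Cref{lem:transfer_trace} yields
\[
    \widehat{\tau}(\ket{\xi}\bra{\xi}) = \tau(\braket{\xi,\xi}) < +\infty,
\]
where finiteness uses the hypothesis that \( \tau \) is a finite trace. Passing to the normal extension of \( \widehat{\tau} \) from \( \End^*_B(E) \) to the enveloping von Neumann algebra \( \mathcal{N} \), the positive element \( T \defeq \varrho^\tau(\ket{\xi}\bra{\xi})\in\mathcal{N}_+ \) likewise satisfies \( \widehat{\tau}(T) < +\infty \).

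The final step is to upgrade \( \widehat{\tau} \)-finiteness of \( T \) to membership in \( \Cpt_{\widehat{\tau}} \). I would apply a Chebyshev-type bound: for each \( \epsilon > 0 \), the spectral projection \( p_\epsilon \defeq \chi_{[\epsilon,\,\norm*{T}]}(T) \) satisfies
\[
    \epsilon \cdot \widehat{\tau}(p_\epsilon) \leq \widehat{\tau}(T) < +\infty,
\]
so \( p_\epsilon \) is a \( \widehat{\tau} \)-finite projection, hence \( p_\epsilon \in \Fin_{\widehat{\tau}} \). Borel functional calculus of \( T \) then gives \( Tp_\epsilon = p_\epsilon T p_\epsilon \) as a norm-limit of finite linear combinations of spectral subprojections of \( p_\epsilon \), each of which is \( \widehat{\tau} \)-finite; in particular \( T p_\epsilon \in \Cpt_{\widehat{\tau}} \). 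Since \( \norm{T - T p_\epsilon} \leq \epsilon \), we conclude \( T = \lim_{\epsilon \to 0} T p_\epsilon \in \Cpt_{\widehat{\tau}} \).

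The main obstacle is largely bookkeeping: properly recording that the construction of \Cref{lem:transfer_trace} yields a trace that extends normally and semi-finitely to the enveloping von Neumann algebra \( \mathcal{N} \), and verifying that this extension preserves trace values on \( \varrho^\tau(\End^*_B(E)) \). This is the standard normal extension procedure for lower semi-continuous semi-finite traces on a \Cst-algebra, but it must be invoked explicitly so that the Chebyshev estimate above makes sense inside \( \mathcal{N} \) rather than only inside \( \End^*_B(E) \).
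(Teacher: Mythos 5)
Your proposal is correct and takes a genuinely different route from the paper's. The paper proves this proposition by exploiting the spectral structure: since the Kasparov module is unital and essential, \( (D+\mathrm{i})^{-1} \in \Cpt_B(E) \) and it generates \( \Cpt_B(E) \) as a closed ideal; combined with \Cref{lem:resolvent_tau_compact} (which shows \( (D^\tau+\mathrm{i})^{-1}\in\Cpt_{\widehat{\tau}} \)) and the fact that \( \Cpt_{\widehat{\tau}} \) is a norm-closed ideal of \( \mathcal{N} \), the conclusion follows immediately. Your argument instead works directly with rank-one operators: by \Cref{lem:transfer_trace}(2) each \( \ket{\xi}\bra{\xi} \) is \( \widehat{\tau} \)-finite, and you deduce membership in \( \Cpt_{\widehat{\tau}} \) by a Chebyshev cut-off \( p_\epsilon \) together with the norm estimate \( \norm{T-Tp_\epsilon}\leq\epsilon \). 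This is essentially a re-proof, in this specific setting, of the general fact that in a semi-finite von Neumann algebra every positive \( \tau \)-finite element belongs to the \( \tau \)-compact ideal. What your approach buys is generality and independence from the operator \( D \): it shows \( \varrho^\tau(\Cpt_B(\mathcal{E}))\subseteq\Cpt_{\widehat{\tau}} \) for \emph{any} Hilbert \( B \)-module \( \mathcal{E} \) carrying a faithful finite trace on \( B \), without the unital Kasparov-module hypothesis. What the paper's approach buys is economy: it reuses \Cref{lem:resolvent_tau_compact} (which is needed anyway for the truncation step) and avoids Chebyshev estimates, at the cost of relying on the non-trivial fact that the resolvent generates the compacts as an ideal. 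Both proofs rely, as you note, on the normal semi-finite extension of \( \widehat{\tau} \) to \( \mathcal{N} \); the paper also takes this for granted rather than proving it, so you are on equal footing there. Minor point: your Borel-functional-calculus digression in the penultimate step is unnecessary — once \( p_\epsilon\in\Fin_{\widehat{\tau}}\subseteq\Cpt_{\widehat{\tau}} \), the ideal property of \( \Cpt_{\widehat{\tau}} \) in \( \mathcal{N} \) gives \( Tp_\epsilon\in\Cpt_{\widehat{\tau}} \) directly.
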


\begin{proof}
Recall that~\( (D+\mathrm{i})^{-1}\in\Cpt_B(E) \) generates~\( \Cpt_B(E)
\) as an ideal.
Therefore,~\( \varrho^\tau(\Cpt_B(E)) \) is contained in the norm closure
of the ideal generated by 
\[
    \varrho^\tau(\Cpt_B(E)(D+\mathrm{i})^{-1})=\varrho^\tau(\Cpt_B(E))(D^\tau+\mathrm{i})^{-1}.
\]
We have shown that~\( (D^\tau+\mathrm{i})^{-1}\in\Cpt_{\widehat{\tau}}
\). Since~\( \Cpt_{\widehat{\tau}} \) is a norm-cloded ideal of~\(
\mathcal{N} \), it therefore follows that~\(
\varrho^\tau(\Cpt_B(E)) \) is also contained in~\( \Cpt_{\widehat{\tau}}
\).  
\end{proof}

Having defined the injective \st-homomorphism, we are now able to prove the
commutativity of the diagram \eqref{eq:commutative_diagram_trace}.

\begin{theorem}
The diagram \eqref{eq:commutative_diagram_trace} commutes.
\end{theorem}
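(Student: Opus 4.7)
The plan is to split the diagram \eqref{eq:commutative_diagram_trace} into its left triangle and right square and treat each separately. The left triangle commutes by \Cref{thm:Phi^D}: it identifies $\Phi^D_*$, up to the suspension isomorphism $\K_1(A)\simeq\K_0(\mathrm{S}A)$, with the boundary map $\partial\colon \K_1(A)\to\K_0(\Cpt_B(E))$ of the Kasparov extension \eqref{eq:Kasparov_KK_to_extension}; and by the definition of the K-theoretic index pairing \eqref{eq:K-theoretic_index_pairing} one has $\times[D]=[E]\circ\partial$.

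For the right square, the strategy is to show that both $\tau_*\circ[E]$ and $\widehat{\tau}_*\circ\varrho^\tau_*$ reduce to the same trace functional on $\K_0(\Cpt_B(E))$, namely the one induced by $\widehat{\tau}|_{\Cpt_B(E)}$. Since $\Fin_B^*(E)\subseteq\Cpt_B(E)$ is the Pedersen ideal (as observed in the proof of \Cref{lem:compactly_supported_fin_rank} via Ara's theorem), it is holomorphically closed, so the inclusion induces an isomorphism $\K_0(\Fin_B^*(E))\xrightarrow{\sim}\K_0(\Cpt_B(E))$. It therefore suffices to verify commutativity on classes of the form $[p]-[q]$ with $p,q\in\Mat_n(\Fin_B^*(E)^+)$ satisfying $s(p)=s(q)$, where $s$ denotes projection onto the scalar part.

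For such representatives, I would write $p-s(p)=\sum_{i=1}^k\ket{\xi_i}\bra{\eta_i}$ as a finite frame decomposition in $\Mat_n(\Fin_B^*(E))$. Via the Morita-style description of $[E]$, the class $[E]([p]-[s(p)])\in\K_0(B)$ is represented by the $B$-valued matrix $(\braket{\eta_i,\xi_j})_{i,j}$, and \Cref{lem:transfer_trace}(2) yields
\[
    \tau_*\bigl([E]([p]-[s(p)])\bigr)=\sum_{i=1}^k\tau(\braket{\eta_i,\xi_i})=\widehat{\tau}(p-s(p)).
\]
On the other side, since $\varrho^\tau$ is an isometric $*$-embedding intertwining the $\widehat{\tau}$ of \Cref{lem:transfer_trace} on $\End_B^*(E)$ with its semifinite extension to the enveloping von Neumann algebra $\mathcal{N}$, the definition of $\widehat{\tau}_*$ gives $\widehat{\tau}_*\bigl([\varrho^\tau(p)]-[\varrho^\tau(s(p))]\bigr)=\widehat{\tau}(\varrho^\tau(p-s(p)))=\widehat{\tau}(p-s(p))$, so the two sides coincide on the chosen generators.

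The hard part will be the Morita-theoretic identification $[E]([p]-[s(p)])=[(\braket{\eta_i,\xi_j})_{i,j}]$ together with the correct bookkeeping of the unitization: although this is the standard compatibility between the K-theoretic effect of the internal tensor product with $E$ and frame decompositions, the scalar-part subtraction $p-s(p)$ must be handled carefully so as to produce a bona fide class in $\K_0(B)$ (and not merely in $\K_0(B^+)$). Once this identification is in place, the remaining ingredients are direct applications of \Cref{lem:transfer_trace}(2), the isometry of $\varrho^\tau$, and functoriality of the trace maps on $K_0$.
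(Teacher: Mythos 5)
Your treatment of the left triangle is fine and matches the paper: \Cref{thm:Phi^D} identifies~\( \Phi^D_* \) with the boundary map~\( \partial \) of the Kasparov extension, and~\( \times[D]=[E]\circ\partial \) by \eqref{eq:K-theoretic_index_pairing}.

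The right square is where the real work lies, and your proposal does not close the gap — it names it (``the hard part'') but does not supply an argument, and the sketch you give runs into a concrete obstruction. You want to write~\( p-s(p)=\sum_{i=1}^k\ket{\xi_i}\bra{\eta_i} \) and then claim that~\( [E]\bigl([p]-[s(p)]\bigr) \) is represented by the~\( B \)-valued matrix~\( (\braket{\eta_i,\xi_j})_{i,j} \). But~\( p-s(p) \) is a finite-rank self-adjoint operator, \emph{not} a projection, so the decomposition is not a frame decomposition of any projective module and~\( (\braket{\eta_i,\xi_j})_{i,j} \) is not a projection; there is no direct K-theoretic interpretation of this matrix, and the usual compatibility between Morita maps and frames (which does hold for projections~\( p\in\Mat_n(\Cpt_B(E)) \) with~\( s(p)=0 \)) simply does not apply to a difference~\( [p]-[s(p)] \) with nonzero scalar part. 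This is exactly the subtlety the paper flags at the outset of its proof: projections in~\( \Mat_n(\Cpt_B(E)^+) \) representing classes in~\( \K_0(\Cpt_B(E)) \) need not be~\( \widehat{\tau} \)-finite, and the map~\( \tau_*\circ[E] \) is not computable by plugging such a projection into~\( \widehat{\tau} \) unless something else is done first.

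Two secondary issues: the paper's \Cref{lem:compactly_supported_fin_rank} shows via Ara's theorem that~\( \Ped(\Cpt_B(E))=\Fin_B^*(\mathcal{P}) \) for a possibly proper submodule~\( \mathcal{P}\subseteq E \), not that it equals~\( \Fin_B^*(E) \); and the inference from holomorphic closure of an incomplete ideal to a~\( \K_0 \)-isomorphism requires a version of the Karoubi density theorem whose hypotheses you would have to verify for~\( \Fin_B^*(E) \). Neither of these is fatal on its own, but together with the unresolved Morita step the argument does not go through as stated.

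The paper's route is genuinely different and sidesteps all three points at once. It invokes Kasparov's stabilisation theorem to replace~\( E \) by~\( \mathcal{H}_B=\ell^2(\N)\otimes B \), so that~\( \Cpt_B(\mathcal{H}_B)\simeq\Cpt\otimes B=\varinjlim\Mat_n(B) \). Since~\( B \) is unital, every class in~\( \K_0(\Cpt\otimes B) \) is a formal difference of honest projections in~\( \Mat_\infty(B) \), for which~\( \widehat{\tau} \) and~\( \tau \) agree by an explicit rank-one computation; commutativity of the two auxiliary squares produced by the stabilisation isometry then follows from functoriality of the interior tensor product. If you want to salvage your more direct approach, the piece you must supply is precisely the identity~\( \tau_*\bigl([E]([p]-[q])\bigr)=\widehat{\tau}(p-s(p))-\widehat{\tau}(q-s(q)) \) for projections~\( p,q \) with~\( [s(p)]=[s(q)] \) and~\( \widehat{\tau} \)-integrable off-scalar parts; this requires an argument that handles the scalar part correctly, and the stabilisation trick is one way to make that argument uniform.
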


\begin{proof}
The subtlety in the proof comes from the fact that
projections in~\( \Mat_n(\Cpt_B(E)^+) \) may fail to be~\(
\widehat{\tau} \)-finite, whence the 
composition of maps 
\[ 
    \K_0(\Cpt_B(E))\xrightarrow{[E]}\K_0(B)\xrightarrow{\tau_*}\R
\]
is hard to compute.
To overcome this, we replace~\( \Cpt_B(E) \) by~\(
\Cpt\otimes B \) using Kasparov's stablisation theorem
\cite{Kasparov:K-functor_extension}.

Since~\( E \) is countably generated, there exists an isometry~\(
V\in\Hom^*_B(E,\mathcal{H}_B) \), where~\( \mathcal{H}_B\defeq
\ell^2(\N)\otimes B \) is the standard countably generated Hilbert~\( B
\)-module. It follows from functoriality of the interior tensor product (or
the KSGNS construction) that the following diagram commutes:
\begin{equation}\label{eq:commutative_diagram_bottom_right}
\begin{tikzcd}
\Cpt_B(\mathcal{H}_B) \arrow[r, "\varrho^\tau"] &
\Cpt_{\widehat{\tau}}(\mathcal{H}_B) \\
\Cpt_B(E) \arrow[u, "\Ad_V"] \arrow[r, "\varrho^\tau"] &
\Cpt_{\widehat{\tau}}(E) \arrow[u, "\Ad_V\otimes\id"'],
\end{tikzcd} 
\end{equation}
here we distinguish the norm-closed ideals~\( \Cpt_{\widehat{\tau}}(E) \)
and~\( \Cpt_{\widehat{\tau}}(\mathcal{H}_B) \), both generated 
by~\( \widehat{\tau} \)-finite projections, 
but on different Hilbert spaces~\(
E\otimes_B L^2(B,\tau)\) and~\( \mathcal{H}_B\otimes L^2(B,\tau) \),
whereas~\( \Cpt_{\widehat{\tau}}(E) \) can be identified via~\(
\Ad_V\otimes\id \) as a corner in~\( \Cpt_{\widehat{\tau}}(\mathcal{H}_B)
\).    

Now we claim that the following augmented diagram commutes:
\begin{equation}\label{eq:commutative_diagram_trace_augmented}\tag{TR+}
\begin{tikzcd}[row sep=large, column sep=large]
\K_1(A) \arrow[r, "\times{[D]}"] \arrow[rdd, bend right=40, "\Phi^D_*"'] & \K_0(B)
\arrow[r, "\tau_*"] & \R \\
& \K_0(\Cpt_B(\mathcal{H}_B)) \arrow[u, "{[\mathcal{H}_B]}"'] \arrow[r,
"\varrho^\tau_*"] & \K_0(\Cpt_{\widehat{\tau}}(\mathcal{H}_B))
\arrow[u, "\widehat{\tau}_*"']  \\
& \K_0(\Cpt_B(E)) \arrow[uu, bend left=60, "{[E]}"]
\arrow[u, "{(\Ad_V)_*}"']  \arrow[r,"\varrho^{\tau}_*"] &
\K_0(\Cpt_{\widehat{\tau}}) \arrow[u, "(\Ad_V)_*\otimes\id"'].
\end{tikzcd} \end{equation}

The commutativity of the left quadrant follows from following equality of
maps in K-theory:
\[ 
    [\mathcal{H}_B]\circ(\Ad_V)_*=[E].
\]
To see this, we view~\( \Ad_V(\Cpt_B(E))\subseteq\Cpt_B(\mathcal{H}_B) \) 
as a \Cst--\( \Cpt_B(E) \)-\( \Cpt\otimes B \)--correspondence, and~\(
\mathcal{H}_B \) as a imprimivity bimodule between~\(
\Cpt_B(\mathcal{H}_B) \) and~\( B \). Thus the equality follows from the
isomorphism of Hilbert \Cst-modules
\[ 
\Ad_V\Cpt_B(E)\otimes_{\Cpt_B(\mathcal{H}_B)}\mathcal{H}_B\simeq
\Ad_V\Cpt_B(E)\otimes_{\Ad_V\Cpt_B(\mathcal{H}_B)}V^*\mathcal{H}_B\simeq 
V^*\mathcal{H}_B=E.
\]

The bottom right square 
of \eqref{eq:commutative_diagram_trace_augmented} follows
from the commutativity of \eqref{eq:commutative_diagram_bottom_right} as
well as functoriality of K-theory. Thus, we are left to show that the top
right square commutes. Note that
\[
    \Cpt_B(\mathcal{H}_B)\simeq
    \Cpt\otimes B=\lim_{\longrightarrow}\Mat_n(B),
\]
and K-theory preserves direct limits. So every class in~\(
\K_0(\Cpt\otimes B) \) is represented by a formal differences of
projections~\( [p]-[q] \) where~\( p,q\in\Mat_{\infty}(B) \). 
Here we have used the assumption that
\( B \) is unital. It also follows that the induced K-theory map~\(
[\mathcal{H}_B] \) maps such a class to~\( [p]-[q] \),
viewed as an element in~\( \K_0(B) \).

The transferred trace~\( \widehat{\tau} \) on~\( \Fin^*_B(\mathcal{H}_B) \)
is, by \Cref{lem:transfer_trace}, given by
\[ 
    \widehat{\tau}(\ket{\varphi\otimes a}\bra{\psi\otimes b})
    \defeq
    \tau(\braket{\psi\otimes b,\varphi\otimes
    a})=\braket{\psi,\varphi}\cdot\tau(b^*a)
\]
for~\( a,b\in B \) and~\( \varphi,\psi\in\ell^2(\N) \).

Now let~\( (b_{ij})_{i,j=1}^n \) be a projection in~\( \Mat_n(B) \). Under
the isomorphism~\( \Cpt\otimes B\simeq\Cpt_B(\mathcal{H}_B) \), it gets
mapped to
\[
    \sum_{i,j=1}^n\ket{\delta_j\otimes b_{ij}}\bra{\delta_i\otimes 1}\in\Fin^*_B(\mathcal{H}_B),
\]
where~\( \delta_i\in\ell^2(\N) \) is the function~\( \N\to\C \) sending~\(
i\) to~\( 1 \) and~\( 0 \) otherwise. Thus we have
\begin{multline*} 
\widehat{\tau}\left(\sum_{i,j=1}^n\ket{\delta_j\otimes
b_{ij}}\bra{\delta_i\otimes 1}\right)=
\tau\left(\sum_{i,j=1}^n\braket{\delta_i\otimes 1,\delta_j\otimes
b_{ij}}\right)=\\
\sum_{i,j=1}^n\braket{\delta_i,\delta_j}\tau(b_{ij})=\sum_{i=1}^n\tau(b_{ii})=\tau((b_{ij})_{i,j=1}^n),
\end{multline*}
that is, the top right square of
\eqref{eq:commutative_diagram_trace_augmented} commutes, hence
the whole diagram commutes.
Now the commutativity of \eqref{eq:commutative_diagram_trace} 
follows from it.
\end{proof}

\bibliography{references}
\end{document}